\newcommand{\vx}{\bm{x}}
\newcommand{\vy}{\bm{y}}
\newcommand{\vu}{\bm{u}}
\newcommand{\vp}{\bm{p}}
\newcommand{\ve}{\bm{e}}
\newcommand{\vj}{\bm{j}}
\newcommand{\vk}{\bm{k}}
\newcommand{\vr}{\bm{r}}
\newcommand{\vn}{\bm{n}}
\newcommand{\vq}{\bm{q}}
\newcommand{\vv}{\bm{v}}
\newcommand{\vz}{\bm{z}}
\newcommand{\vE}{\bm{E}}
\newcommand{\vA}{\bm{A}}
\newcommand{\vC}{\bm{C}}
\newcommand{\vI}{\bm{I}}
\newcommand{\vJ}{\bm{J}}
\newcommand{\vG}{\bm{G}}
\newcommand{\vH}{\bm{H}}
\newcommand{\vP}{\bm{P}}
\newcommand{\vU}{\bm{U}}
\newcommand{\vxi}{\bm{\xi}}
\newcommand{\vPi}{\bm{\Pi}}
\newcommand{\vPsi}{\bm{\Psi}}
\newcommand{\vpsi}{\bm{\psi}}
\newcommand{\vzero}{\bm{0}}
\newcommand{\bGa}{\bm{\alpha}}
\newcommand{\cA}{\mathcal{A}}
\newcommand{\cG}{\mathcal{G}}
\newcommand{\cF}{\mathcal{F}}
\newcommand{\cI}{\mathcal{I}}
\newcommand{\cO}{\mathcal{O}}
\newcommand{\cB}{\mathcal{B}}
\newcommand{\cW}{\mathcal{W}}
\newcommand{\cR}{\mathcal{R}}
\newcommand{\cE}{\mathcal{E}}
\newcommand{\cvG}{\boldsymbol{\cG}}
\newcommand{\tr}{\top}
\newcommand{\M}[1]{\left({#1}\right)}
\newcommand{\Mb}[1]{\left[{#1}\right]}
\newcommand{\Ma}[1]{\left\langle{#1}\right\rangle}
\newcommand{\Mcb}[1]{\left\{{#1}\right\}}
\renewcommand{\Re}{\mathrm{Re}}
\renewcommand{\Im}{\mathrm{Im}}
\newcommand{\real}{\mathbb{R}}
\newcommand{\complex}{\mathbb{C}}
\renewcommand{\i}{\imath}
\DeclareMathOperator{\Cov}{Cov}
\DeclareMathOperator{\Var}{Var}
\DeclareMathOperator{\sinc}{sinc}
\DeclareMathOperator{\linspan}{span}
\DeclareMathOperator{\cond}{cond}
\DeclareMathOperator{\diag}{diag}
\newtheorem{theorem}{Theorem}
\newtheorem{proposition}{Proposition}
\newtheorem{lemma}{Lemma}
\newtheorem{remark}{Remark}
\begin{document}

\title{Imaging small polarizable scatterers with polarization data}

\author{Patrick Bardsley}
\email{bardsleypt@gmail.com}

\author{Maxence Cassier}
\address{Aix Marseille Univ, CNRS, Centrale Marseille, Institut Fresnel, Marseille, France;
Department of Applied Physics and Applied Mathematics, Columbia University, New York, NY 10027, United States.}
\email{maxence.cassier@fresnel.fr}

\author{Fernando Guevara Vasquez}
\address{Mathematics Department, University of Utah, 155 S 1400 E RM
233, Salt Lake City UT 84112-0090.}
\email{fguevara@math.utah.edu}
\keywords{Polarization imaging, Polarizability tensor, Kirchhoff
migration, Phaseless imaging, Coherency matrix, Stokes parameters}

\subjclass[2000]{%
35R30, %
78A46.%
}

\begin{abstract}
We present a method for imaging small scatterers in a homogeneous medium
from polarization measurements of the electric field at an array. The
electric field comes from illuminating the scatterers with a point
source with known location and polarization. We view this problem as a
generalized phase retrieval problem with data being the coherency matrix
or Stokes parameters of the electric field at the array. We introduce a
simple preprocessing of the coherency matrix data that partially
recovers the ideal data where all the components of the electric field
are known for different source dipole moments. We prove that the images
obtained using an electromagnetic version of Kirchhoff migration applied
to the partial data are, for high frequencies, asymptotically identical
to the images obtained from ideal data.  We analyze the image resolution
and show that polarizability tensor components in an appropriate basis
can be recovered from the Kirchhoff images, which are tensor fields. A
time domain interpretation of this imaging problem is provided and
numerical experiments are used to illustrate the theory.  
\end{abstract}
\maketitle

\section{Introduction}
\label{sec:intro}
We consider the problem of imaging a collection of small dielectric
scatterers by illuminating the scene with a point source whose location
is known but that is driven by a random process with known statistical
properties. This is a common assumption in optics, where it is easier to
measure polarization, which is a statistical property of light. The data
we use for imaging is also the polarization measured at an array of
receivers. The polarization state
of a wave measured on the plane $x_1, x_2$ can be described by the
coherency matrix (see e.g. \cite{Mandel:OCQ:1995}), which is the
$2\times 2$ Hermitian matrix of correlations between the
$x_1, x_2$ components of the frequency domain electric field
$\vE(\vx_r,\omega)$:
\begin{equation}
 \vPsi = \langle \vE_\parallel \vE_\parallel^* \rangle,
 \label{eq:cohmat0}
\end{equation}
where $\Ma{\cdot}$ denotes the average over many realizations and
$\vE_\parallel \equiv [E_1,E_2]^T$. An equivalent description of the
polarization is the so called {\em Stokes parameters}:
\begin{equation}
    I = \Ma{|E_1|^2 + |E_2|^2},~
    Q = \Ma{|E_1|^2 - |E_2|^2},~
    U = \Ma{2\Re(E_1\overline{E_2})},~\text{and}~
    V = \Ma{2\Im(E_1\overline{E_2})}.
    \label{eq:stokes}
\end{equation}
Thus the polarization data we assume corresponds to a {\em four
dimensional real field} defined over the array. This field can be
measured directly by a conventional CMOS sensor with successive
experiments involving a combination of linear polarizers and
quarter-wave plates, see e.g.  \cite{Berry:1977:MSP}.

We think of the problem at hand as a vector analogue to imaging with
intensities only.  Indeed, suppose that we had full control of the phase
and amplitude of the dipole moment describing the source and that we
could measure both amplitudes and phases of the $x_1, x_2$ components of
the electric field at the array. Then we could perform experiments with
linearly independent dipole moments and the data at the array would be a
$2\times 2$ complex matrix field, or equivalently an {\em 8 dimensional
real field}. Thus by using polarization data {\em we are foregoing half
of the degrees of freedom} compared to this ideal case.  This is similar
to intensity only imaging, where only a one dimensional quantity (the
magnitude or intensity) is measured for a two dimensional quantity (the
complex representation of a scalar field). The other similarity with
intensity measurements is that the coherency matrix \eqref{eq:cohmat0}
is a quadratic form of the data.

The strategy we use for imaging generalizes the approach for scalar
waves in \cite{Bardsley:2016:KIW} and consists of preprocessing the
polarization data to approximate the ideal data. Since the
preprocessing creates an 8 dimensional real field from a 4 dimensional
one, there are very significant errors. The key is to show that these
errors do not affect images of the scatterers, if we use an
electromagnetic version of Kirchhoff imaging \cite{Cassier:2017:IPD}.
We do this with a stationary phase argument. The images we obtain
are $2 \times 2$ complex matrix fields that contain information about
the polarizability tensors of the scatterers, projected onto an appropriate
basis that is dictated by the relative positions of the source, the
array and the scatterers.

\subsection{Related work}

The resolution analysis of the electromagnetic version of Kirchhoff
migration we present here adapts the analysis in \cite{Cassier:2017:IPD}
to the case of an array receivers with a source away from the array. The
conclusion of \cite{Cassier:2017:IPD} is that well known spatial
resolution estimates that hold for acoustics (see e.g.
\cite{Blei:2013:MSI}) also hold in electromagnetics. The recovery of
polarizability tensor information from Kirchhoff migration images was
done in \cite{Cassier:2017:IPD}, where the data was collected for a
collocated array of sources and receivers.  In our case the sources and
the receivers are at different locations and thus the components of the
polarizability tensor that can be stably recovered differ.

There are other methods for imaging small scatterers in
electromagnetics, including methods using MUSIC \cite{Cheney:2001:LSM}
such as \cite{Ammari:2007:MEI,Ammari::2014:TCE,Borcea:2016:RIE}.  The
effect of noise on the MUSIC images has been analyzed using random matrix
theory in \cite{Borcea:2016:RIE}. Imaging small scatterers is related to
the selective focusing problem considered in \cite{Antoine:2008:FFM}.

The problem of finding the electric field from coherency matrix data is
a generalization of the phase retrieval problem. This is a classic problem in
optics where phases are typically much harder to measure than
intensities. Here we give a non-exhaustive overview of methods for
solving the phase retrieval problem or imaging without phases. We focus
on approaches that are the closest to the method we present here. In
specific situations, uniqueness is guaranteed
\cite{Klibanov:1992:PIS,Klibanov:1994:UPK,Klibanov:2014:UTP}. Methods
for solving this problem include iterative methods that approximatively
recover phases
\cite{Gerchberg:1972:GSA}, redundant expansion in frames
\cite{Casazza:2014:PRV,Balan:2006:OSR} and sparsity promoting algorithms
that exploit the sparsity of an image made of point scatterers, see e.g.
\cite{Chai:2011:AII, Candes:2013:PL, Xin:2015:PLO}. 

A key aspect of our imaging method is that we do not need to retrieve
all phases from the data, since the imaging method we use does not
require it. This is a feature that is also used for intensity only
imaging in
\cite{Novikov:2015:ISI,Moscoso:2016:CIP,Moscoso:2017:MII,Bardsley:2016:KIW,Bardsley:2016:IPC}.
For full aperture data, phase retrieval is done in
\cite{Chen:2016:DIM,Chen:2017:PIR,Chen:2017:DIM} using a similar
preprocessing of the intensity data as we present here. One novelty in
our method is that we image polarizability of scatterers, directly from
polarization data. There are other ways of imaging polarizability in
different physical setups such as
Optical Coherence Tomography from interferometric data
\cite{Elbau:2017:ISP} and Polarimetric Synthetic Aperture Radar (see
e.g. \cite{Freeman:1998:TCS}). Finally we point out that our imaging method
can be seen as correlation based imaging, see e.g.
\cite{Schuster:2009:SI,Garnier:2009:PSI,Garnier:2010:RAI,Garnier:2015:SNR}.

Throughout this paper we assume the scatterers are point-like. This is a
reasonably good approximation as can be seen from small diameter
asymptotic analysis \cite{Vog::AFP:2000,Am::AFP:2001,Ammari::2003:ESD}.
The expansion can actually be carried further in terms of powers of the
diameter and can be used for reconstruction, see e.g.
\cite{Ammari::2013}.

\subsection{Contents}
We start by describing two imaging problems in \cref{sec:problem:setup}:
(a) the ideal ``full data'' case where we can control both the amplitudes
and phases of the source and the receivers and (b) the case where they are
random and we only know their statistical properties. In
\cref{sec:imaging} we introduce the electromagnetic version of Kirchhoff
imaging (that assumes full data is available) and define the preprocessing
step that takes polarization data and gives an approximation of the full
data that can be used for imaging. We prove in \cref{sec:stationary} that in the
high frequency limit, the Kirchhoff image of the preprocessed
polarization data is asymptotically identical to the image obtained
with full data. In \cref{sec:fraunhofer} we do a resolution analysis of
Kirchhoff imaging for electromagnetic waves, that generalizes
the results obtained in \cite{Cassier:2017:IPD} to the case where the sources and
receivers are not collocated. All the analysis up to this point is done
in the frequency domain. The connection with the time domain is done in
\cref{sec:stochillum}, where we show that polarization data can also be
obtained by measuring autocorrelations of the time domain electric field
over long periods of time. Our theoretical results are illustrated with
numerical experiments in \cref{sec:numerics}. We conclude with a
discussion and perspectives for future work in \cref{sec:discussion}.

\section{Problem setup}
\label{sec:problem:setup}
The experiment we consider is depicted in \cref{fig:setup}. We illuminate a
family of point-like 
scatterers located at $\vy_1,\ldots, \vy_n \in \mathbb{R}^3$, 
with the field emanating from an electric dipole at a known location
$\vx_s$. We measure the
electric field at an array $\cA$ located in the $x_3=0$ plane.  In the
following, we denote by  $\varepsilon$ the dielectric permittivity, $\mu$
the magnetic permeability, $c = (\varepsilon \mu)^{-1/2}$ the wave
propagation of the medium (assumed to be homogeneous), $\omega$ the
angular frequency and $k=\omega/c$ the wavenumber. We use the convention
\begin{equation}
\vE(\vx;t) = \int d\omega \vE(\vx;\omega) \exp[-\imath \omega t]
~\text{and}~
\vE(\vx;\omega) = \frac{1}{2\pi} \int dt \vE(\vx;t) \exp[\imath \omega t],
\label{eq:fourier:convention}
\end{equation}
to relate the electric field in the time and frequency domain. We assume the symmetry $\vE(\vx;-\omega) = \overline{\vE(\vx;\omega)}$ so that the time domain electric field is real.
\begin{figure}
 \begin{center}
 \includegraphics[width=0.6\textwidth]{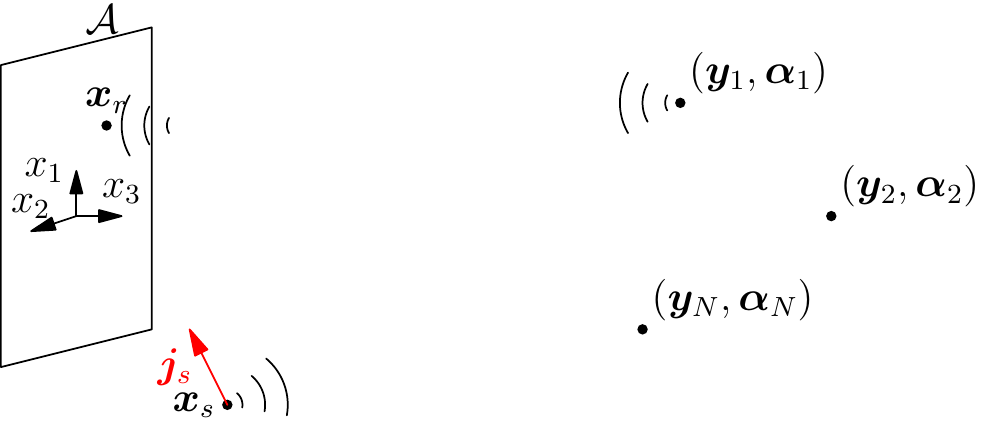}
 \end{center}
 \caption{We seek to image the position and polarizability tensors of a collection of point scatterers at locations
 $\vy_1,\ldots,\vy_n$ from measurements of the electric field made at an
 array $\cA$. The medium is illuminated by an electric dipole at
 a known position $\vx_s$ with polarization $\vj_s$.}
 \label{fig:setup}
\end{figure}

The incident electric field $\vE_{I}(\vx, \omega)$ generated by an
electric dipole located at the source point $\vx_s$ with (possibly
complex) {\em dipole moment} or {\em polarization vector} $\vj_s'(\omega) \in \complex^3$ is 
\begin{equation}
 \vE_{I}(\vx;\omega) = \vG(\vx,\vx_s;\omega/c) \vj_s(\omega),
 \label{eq:incident}
\end{equation}
where we use for convenience $\vj_s(\omega) \equiv \mu \omega^2
\vj_s'(\omega)$. Hereinafter, when we refer to 
polarization vector of an electric dipole, we refer to the rescaled
$\vj_s(\omega)$ instead of the physical $\vj_s'(\omega)$.
Here $\vG(\vx,\vy;k)$ is the dyadic Green function of the
homogeneous background medium, a $3 \times 3$ complex symmetric matrix
given by (see e.g.  \cite{Novotny:2012:PNO})
\begin{equation}
 \vG(\vx,\vy;k) = G(\vx,\vy;k) \Mb{  (1+m(kr))\vI - (1+3m(kr))
 \frac{\vr\vr^\tr}{r^2} }
 \label{eq:dyadic}
\end{equation}
where $G(\vx,\vy;k) = \exp[\imath k r]/(4\pi r)$ is the Green function
for the scalar Helmholtz equation in 3D, $\vr = \vx - \vy$, $r =
|\vx-\vy|$ and $m(kr) = (\imath kr - 1) / (kr)^2$.

In this setting, the expression of the total field (incident field  plus
scattered field) at a receiver is given (see \cite{Novotny:2012:PNO})
by 
\begin{equation}\label{eq.exacttotalfieldpoitnscatterers}
\vE(\vx_r;\omega)= \vE_{I}(\vx_r; \omega)+\sum_{n=1}^N
\vG(\vx_r,\vy_n;\omega/c) \, \bGa(\vy_n;\omega)  \, \vE(\vy_n; \omega),
\, \forall \vx_r \in \cA, 
\end{equation} 
where $\bGa(\vy_n;\omega) \equiv \mu \omega^2 \bGa'(\vy_n;\omega)$  is
the {\em polarizability} or {\em polarization tensor} of the scatterer
located at $\vy_n$.  The $3 \times 3$ complex symmetric matrix
$\bGa(\vy_n;\omega)$ fully characterizes the scattering properties of
the $n-$th scatterer.  Its analogue in acoustics is the reflection
coefficient.  We choose to work with the rescaled tensor $\bGa$ since it
better reflects the high frequency behavior of the physical
polarizability $\bGa'$. Indeed for small penetrable inclusions made of a
Drude-Lorentz material, the rescaled $\bGa$ can be regarded as
independent of frequency for high frequencies \cite{Novotny:2012:PNO}.
More generally, this high frequency behavior holds also for a large
class of inclusions whose permittivity  is characterized by a
generalized Lorentz model (i.e., a sum of Lorentz oscillators).
Hereinafter, when we refer to the polarizability tensor, we refer to the
rescaled $\bGa$, and we assume (for simplicity) that it is constant with
respect to $\omega$. 

We assume scattering in the medium is sufficiently weak such that the
first Born approximation holds (see e.g.
\cite{Born:1959:POE,Novotny:2012:PNO}). This amounts to taking
$\vE(\vy_n; \omega)=\vE_I(\vy_n; \omega)$ in the right-hand side of
\cref{eq.exacttotalfieldpoitnscatterers}, along with the assumption that
the leading order correction to the first Born approximation
(corresponding to second order scattering events) satisfies
\[
\Big\|\sum_{\substack{m=1\\ m\neq n}}^{N}
\vG(\vy_n,\vy_m;\omega) \bGa(\vy_m;\omega)  \, \vE_I(\vy_m; \omega)\Big\|
\ll  \|\vE_I(\vy_n; \omega) \|.
\]
Combining this approximation with
\eqref{eq.exacttotalfieldpoitnscatterers} and \eqref{eq:incident}, the
total field at a receiver $\vx_r\in \cA$ becomes
\[
\vE(\vx_r;\omega)= \vE_{I}(\vx_r; \omega)+ \sum_{j=1}^{N}
 \, \vG(\vx_r,\vy_n;\omega/c) \, \bGa(\vy_n)  \,
\vG(\vy_n,\vx_s;\omega/c) \vj_s(\omega).  
\]
Note, this approximation neglects all multiple scattering events. For
strong multiple scattering, one can include higher order terms of the
Born series \cite{Born:1959:POE}, and/or use the Foldy-Lax model (see
e.g, \cite{Cassier:2013:MSA} for acoustics and \cite{Malet:2005:MGM} for
electromagnetism). 

\subsection{The full data problem}
\label{sec:fulldata}
We first consider the ideal case where we measure all components of the total
electric field at the array $\cA$, corresponding to an illumination with an
electric dipole source  at $\vx_s$ with known polarization
vector $\vj_s(\omega)$.  Since different dipole moments $ \vj_s(\omega)$
could be used to control the incident field, one can extract from the
measurements of the total field the array response function 
$\vPi(\vx_r,\vx_s;k) \in \complex^{3\times 3}$ defined by
\begin{equation}\label{eq::data1}
\vPi(\vx_r,\vx_s;k)= \sum_{n=1}^N \vG(\vx_r,\vy_n;k)
\, \bGa(\vy_n)  \,  \vG(\vy_n,\vx_s;k), \  \forall \vx_r \in \cA.
\end{equation}

\subsection{Polarization data problem}
\label{sec:poldata}
When working with light sources, it is hard to control all the
components of the polarization vector $\vj_s$ as we assumed in
\cref{sec:fulldata}. Actually only certain directions of the
polarization vector matter. This is because the field $\vE_I$ emanating
from the electric dipole at $\vx_s$ can be well
approximated\footnote{The plane wave approximation is rigorously
justified in the asymptotic analysis of \cref{sec:fraunhofer}.} in the
vicinity of a point $\vy_0$ far from $\vx_s$ by the plane wave
\begin{equation}
  \vE_I(\vx;\omega) \approx \frac{\vp \times \vk }{|\vr_0|} \exp[\imath \vk \cdot \vx ],
  \label{eq:pwapprox}
\end{equation}
with polarization $\vp = (\vI - \vr_0 \vr_0^T)\vj_s$, wave vector $\vk =
k \vr_0/|\vr_0|$ and $\vr_0 = \vy_0 - \vx_s$. Thus if we are
far away from a source, we may assume that only two orthogonal
components of the polarization vector can be controlled, i.e. that
$\vj_s \in \linspan\Mcb{\vu_1,\vu_2}$, where $\Mcb{\vu_1,\vu_2}$ is a real
orthonormal basis of polarization directions in $(\vy_0 - \vx_s)^\perp$,
and $\vy_0$ is a known and fixed point near the scatterers we wish to
image. Instead of assuming control of the phases and amplitudes of the
vector $\vj_s$, we assume it is a circularly symmetric Gaussian random
vector (see e.g \cite{Goodman:1963:SAB}) satisfying 
\begin{equation}
 \Ma{\vj_s} = \vzero,~
 \Ma{\vj_s \vj_s^*} = \vJ_s = \vU_s \widetilde{\vJ}_s \vU_s^*,
 ~\text{and}~
 \Ma{\vj_s \vj_s^T} = \vzero,
 \label{eq:js:assumption}
\end{equation}
where $\vU_s \equiv [\vu_1,\vu_2]$, $\vJ_s$ is a known $3 \times 3$
Hermitian matrix and $\Ma{\cdot}$ denotes the mean or expectation. The
assumptions on the frequency dependence of $\vj_s$ are made later in
\cref{sec:stochillum}. Note that the $2\times 2$ Hermitian matrix
$\widetilde{\vJ}_s$ in \eqref{eq:js:assumption} is the coherency matrix
encoding the polarization state of the plane wave approximation near
$\vy_0$ of the point source with origin $\vx_s$.

Similarly, if the array where we make measurements is far away from the
scatterers, the scattered field can be approximated by a plane wave with
polarization vector having a range component that is, for all practical
purposes, zero (i.e. $\vp \approx (p_1,p_2,0)$). Thus the scattered
field also has a range component that is very small compared to the
cross-range components\footnote{This is rigorously justified by the
Fraunhofer regime asymptotic study of \cref{sec:fraunhofer}.}. This
motivates measuring  the matrix of correlations between the cross-range
components of the total electric field, i.e. the {\em coherency matrix}
given by
\begin{equation}
 \vPsi(\vx_r;\omega) := \vU_\parallel^* \Ma{ \vE(\vx_r; \omega) \vE(\vx_r;\omega)^* }\vU_\parallel,
 \label{eq:cohmat}
\end{equation}
where $\vU_\parallel := [\ve_1, \ve_2]$.  
Assuming the source satisfies \eqref{eq:js:assumption} and that the Born
approximation holds, the coherency matrix becomes
\begin{equation}
  \begin{aligned}
  \vPsi &= \vU_\parallel^* \Ma{ 
   (\vG + \vPi) 
   \vj_s \vj_s^* 
   (\vG + \vPi)^*
   }\vU_\parallel\\
   &= \vU_\parallel^*
    (\vG + \vPi) 
    \vU_s \widetilde{\vJ}_s \vU_s^*
    (\vG + \vPi)^*
    \vU_\parallel\\
  & = \widetilde{\vG} \widetilde{\vJ}_s \widetilde{\vG}^* +
      \widetilde{\vPi} \widetilde{\vJ}_s \widetilde{\vG}^* +
      \widetilde{\vG}  \widetilde{\vJ}_s \widetilde{\vPi}^* +
      \widetilde{\vPi} \widetilde{\vJ}_s \widetilde{\vPi}^*,
 \end{aligned}
\label{eq:cohmat2}
\end{equation} 
where for clarity we dropped the dependence on the source, receiver and
frequency, and we adopt the notation 
\begin{equation}
   \widetilde{\vG} = \vU_\parallel^* \vG \vU_s
   ~\text{and}~ 
   \widetilde{\vPi} = \vU_\parallel^* \vPi \vU_s.
   \label{eq:GPi:coeff}
\end{equation} 
\section{Imaging method}
\label{sec:imaging}
The imaging method we present here is an electromagnetic adaptation of
Kirchhoff migration (\cref{sec:km}). This method assumes the full data
(as defined in \cref{sec:fulldata}) is available. The strategy to image
with polarization data is explained in
\cref{sec:strategy}.
\subsection{Kirchhoff migration in electromagnetics}
\label{sec:km}
The single frequency electromagnetic version of the Kirchhoff imaging
function that we use here comes from \cite{Cassier:2017:IPD} and
operates on a $3 \times 3$ complex matrix $\vPi$ field defined on the
array. For each point $\vy$ in the imaging window, the imaging function
produces a $3 \times 3$ complex matrix valued field  given by
\begin{equation}
\cI_{KM}[\vPi](\vy;k) = \int_{\cA} d \vx_{r,\parallel}  \,
\overline{\vG(\vx_r,\vy;k)} \, \vPi(\vx_r,\vx_s;k)
\, \overline{\vG(\vx_s,\vy;k)},
\label{eq:defkirch}
\end{equation}
where the integral is done  with respect to the cross-range coordinates
$\vx_{r,\parallel}$, and with a slight abuse of notation $\cA$ denotes
the set representing the planar array $\cA$. When the data $\vPi$ comes
from $N$ point-like scatterers \eqref{eq::data1}, the Kirchhoff image
can be written explicitly as
\begin{equation}
\begin{aligned}
\cI_{KM}&[\vPi](\vy;k)
\\&=\sum_{n=1}^{N} \left[ \int_{\cA} d\vx_{r,\parallel} \,
\overline{\vG\left(\vx_r, \vy;k\right)}
\vG\left(\vx_r,\vy_n;k\right) \right]\, \bGa(\vy_n) \,
\vG\left(\vy_n,\vx_s;k\right) \,
\overline{\vG\left(\vx_s,\vy; k\right)}     \\
&= \sum_{n=1}^{N} \vH_r\left(\vy,\vy_n;k\right) \,
\bGa(\vy_n) \,  \vH_s\left(\vy,\vy_n;k\right)^{\top},
\end{aligned}
\label{eq.defimagingfuction}
\end{equation}
where $\vH_s$ and $\vH_r$  are the $3\times 3$ complex symmetric
matrices defined by: 
\begin{equation}
\begin{aligned}
\vH_s(\vy,\vy';k)&=
\overline{\vG\left(\vx_s,\vy;k\right)} \,
\vG(\vx_s,\vy';k)  
~\text{and}~\\
\vH_r\left(\vy,\vy';k\right)&= \int_{\cA} d\vx_{r,\parallel}  \,
\overline{\vG\left(\vx_r,\vy;k\right)}
\vG\left(\vx_r,\vy';k\right). 
\end{aligned}
\label{eq:H}
\end{equation}

\subsection{Strategy for imaging with polarization data}
\label{sec:strategy}
Our imaging method consists of two steps. The first step is
``preprocessing'' the coherency matrix data \eqref{eq:cohmat2} to
estimate the cross-range components of the full data \eqref{eq::data1}.
The second step is using Kirchhoff migration to image using this
preprocessed data. We prove that the error made in the preprocessing
step does not appear in the Kirchhoff image, and thus the image we get
with coherency matrix data is (asymptotically) identical to the image
we would obtain with full data (see the stationary phase argument in
\cref{sec:stationary}). If $\vPsi$ is the $2\times 2$ complex Hermitian
matrix representing the coherency matrix, the preprocessing is defined
by the mapping $\vp$ that takes a $2 \times 2$ matrix field in the array
and gives the $3 \times 3$ matrix field given by
\begin{equation}
   \vp(\vPsi) := \vU_\parallel [ 
       \vPsi - \widetilde{\vG}\widetilde{\vJ}_s\widetilde{\vG}^*
     ]
     \widetilde{\vG}^{-*} \widetilde{\vJ}_s^{-1}\vU_s^*,
     \label{eq:prepro}
\end{equation}
where we omitted the dependency on source, receiver and frequency of
$\vPsi$ and $\widetilde{\vG}$ (the $2\times 2$ submatrix of $\vG$
defined in \eqref{eq:GPi:coeff}). The preprocessing map is designed to
extract the $2 \times 2$ submatrix $\widetilde{\vPi}$ of $\vPi$ from the
coherency matrix data $\vPsi$. Indeed, applying the preprocessing $\vp$
to the data \eqref{eq:cohmat2} we get
\begin{equation}
  \vp(\vPsi) = \vU_\parallel \widetilde{\vPi} \vU_s^* + \vq,
\label{eq:prepro:result}
\end{equation}
where the error $\vq$ includes antilinear and sesquilinear terms in $\widetilde{\vPi}$:
\begin{equation}
  \vq :=
  \vU_\parallel [
  \widetilde{\vG}  \widetilde{\vJ}_s \widetilde{\vPi}^* +
  \widetilde{\vPi}  \widetilde{\vJ}_s \widetilde{\vPi}^*
   ] \widetilde{\vG}^{-*}  \widetilde{\vJ}_s^{-1}\vU_s^*.
  \label{eq:prepro:error}
\end{equation}
We prove later in \cref{sec.physmeas} that it is possible to image with
$\vU_\parallel \widetilde{\vPi} \vU_s^*$ instead of $\vPi$.
Finally note that the preprocessing requires to calculate the inverse of
the $2\times 2$ matrix $\widetilde{\vG}^*$ for every point of the array.
This matrix is in general invertible, with a condition number depending
only the relative positions of the array, the source and the scatterers,
as is proved in \cref{app:gtilde}.  We also assume invertibility of the coherency
matrix $\widetilde{\vJ}_s$ of the source. This can be
guaranteed by doing two experiments with different source polarizations
(e.g. two orthogonal linear polarizations) or by considering a source
with sufficiently rich polarizations.

\section{Kirchhoff imaging of preprocessed coherency matrix}
\label{sec:stationary}
We now use a stationary phase argument to show that the Kirchhoff image
of the preprocessed coherency matrix \eqref{eq:cohmat} for a collection
of point scatterers is asymptotically close to the Kirchhoff image
\eqref{eq::data1} obtained  in the same experimental setup by assuming
that one can measure the corresponding field components with the data
$\vU_\parallel \widetilde{\vPi} \vU_s^*$.  Thus, the missing phase
information in the data $\vp(\vPsi)$ does not affect the Kirchhoff image
when the frequency is sufficiently high. In the following, the box $\cW
= [-b/2,b/2]^2 \times [L-h/2,L+h/2]$ is the imaging window that is
assumed to contain all the scatterers. For simplicity we also assume
that $\bGa$ and $\vJ_s$ are constant in frequency. This assumption can
be easily relaxed to include smooth frequency dependence provided the
growth of these quantities (and all their derivatives) as $\omega \to
\infty$ is dominated by a polynomial.

\begin{figure}[h!]
  \includegraphics[width=0.8\textwidth]{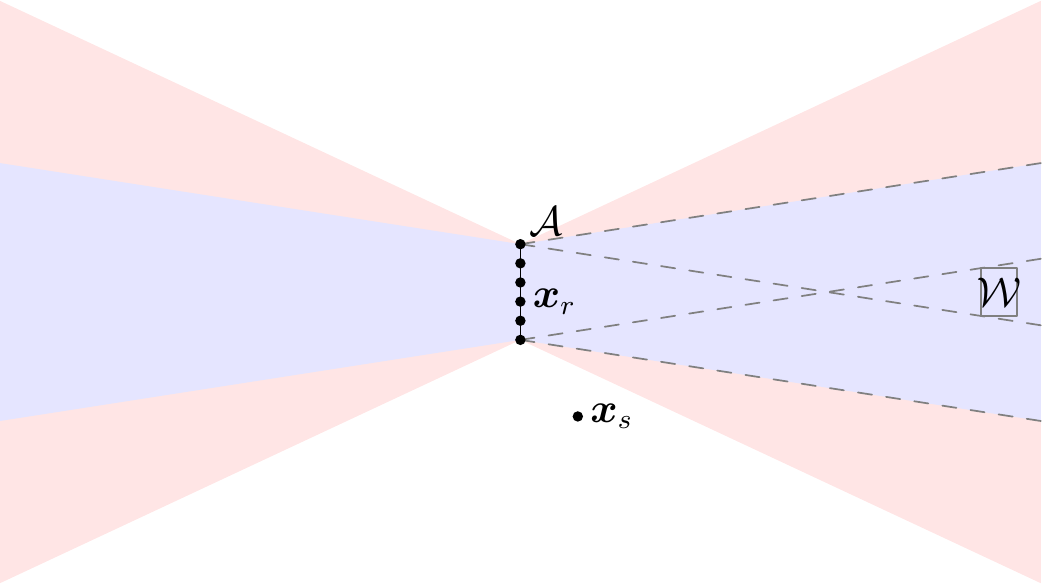}
  \caption{In \cref{thm:geoimg}, the source $\vx_s$ needs to be outside
  of the region $\cR(\gamma)$ given in \eqref{eq:geoimg}. A two dimensional
  example of $\cR(1)$ is given in light blue and is constructed by taking
  the union of identical cones with tips at the array. The cones are
  sufficiently wide so that each one contains the imaging window. The
  positive part of the cones with tip at the boundary of the array are
  in dashed line. The light red areas need to be added to deal with
  multiple scattering, where the region should be $\cR(3)$.}
  \label{fig:geoimg}
\end{figure}
\begin{theorem}
  \label{thm:geoimg}
  For all points $\vy$ in the imaging window $\cW$ we have
  \begin{equation}
   \cI_{KM}[\vp(\vPsi)](\vy;k) = \cI_{KM}[\vU_\parallel \widetilde{\vPi} \vU_s^* ](\vy;k) + o(1),~\text{as}~k \to \infty,
  \end{equation}
  the source location
  $\vx_s$ is {\em outside} of the region $\cR(\gamma)$ defined by
  \begin{equation}
   \cR(\gamma) = \bigcup_{\vx_r \in \cA}\Mcb{ \vy ~|~ 
    |\vx_{r,\parallel} - \vy_\parallel| \leq c \gamma|\vx_{r} - \vy|
   },
    \label{eq:geoimg}
  \end{equation}
  where 
  \begin{equation}
   c = \frac{a+b}{\sqrt{(2L-h)^2 + (a+b)^2}}.
   \label{eq:c}
  \end{equation} 
  If weak scattering holds (i.e. Born approximation) we  can take
  $\gamma = 1$. Alternatively, if finitely many scattering
  events are kept, $\gamma = 3$ is sufficient.  See
  \cref{fig:geoimg} for a visualization of the region $\cR(\gamma)$ for
  different $\gamma$.
\end{theorem}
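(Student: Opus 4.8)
\emph{Proof proposal.} The plan is to reduce everything to a single oscillatory integral estimate using the linearity of $\cI_{KM}$ in its matrix argument. By \eqref{eq:prepro:result} we have $\vp(\vPsi) = \vU_\parallel\widetilde{\vPi}\vU_s^* + \vq$, so linearity of \eqref{eq:defkirch} gives
\[
\cI_{KM}[\vp(\vPsi)](\vy;k) - \cI_{KM}[\vU_\parallel\widetilde{\vPi}\vU_s^*](\vy;k) = \cI_{KM}[\vq](\vy;k),
\]
and it suffices to show $\cI_{KM}[\vq](\vy;k) = o(1)$ uniformly for $\vy \in \cW$. I would substitute the point-scatterer form \eqref{eq::data1} into the error \eqref{eq:prepro:error} and expand $\vq$ into a finite sum of elementary terms: an \emph{antilinear} family arising from $\widetilde{\vG}\widetilde{\vJ}_s\widetilde{\vPi}^*\widetilde{\vG}^{-*}$ (indexed by a single scatterer $\vy_m$) and a \emph{sesquilinear} family arising from $\widetilde{\vPi}\widetilde{\vJ}_s\widetilde{\vPi}^*\widetilde{\vG}^{-*}$ (indexed by a pair $\vy_n,\vy_m$). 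Inserted into \eqref{eq:defkirch}, each becomes an integral over the array in $\vx_{r,\parallel}$; note that the right-hand factor $\overline{\vG(\vx_s,\vy;k)}$ is independent of $\vx_r$ and so plays no role in the oscillation.

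The next step is to separate the rapidly oscillating scalar phase from the matrix amplitude in each elementary term. Using the high-frequency form of \eqref{eq:dyadic}, $\vG(\vx,\vy;k) = (e^{\imath k r}/4\pi r)(\vI - \vr\vr^\tr/r^2) + O(k^{-1})$ with $\vr=\vx-\vy$, $r=|\vx-\vy|$, every factor $\vG$, $\widetilde{\vG}$, $\widetilde{\vPi}$ and their conjugates carry a scalar phase $e^{\pm\imath k(\cdot)}$ times a matrix amplitude that is smooth in $\vx_r$ with $\vx_{r,\parallel}$-derivatives bounded uniformly in $k$. For the inverse factor I would use the condition-number bound of \cref{app:gtilde} to write $\widetilde{\vG}^{-*} = 4\pi|\vx_r-\vx_s|\,e^{+\imath k|\vx_r-\vx_s|}\vM_s^{-*} + O(k^{-1})$, where $\vM_s = \vU_\parallel^*(\vI-\vr\vr^\tr/r^2)\vU_s$, so that it contributes the scalar phase $+k|\vx_r-\vx_s|$ and a bounded smooth amplitude. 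Collecting phases, the $\vx_r$-dependent part is $\Phi_A = 2|\vx_r-\vx_s| - |\vx_r-\vy_m| - |\vx_r-\vy|$ for the antilinear term (two copies of $|\vx_r-\vx_s|$ from $\widetilde{\vG}$ and $\widetilde{\vG}^{-*}$, and $-|\vx_r-\vy|$ from the kernel $\overline{\vG(\vx_r,\vy;k)}$), and $\Phi_B = |\vx_r-\vy_n| - |\vx_r-\vy_m| + |\vx_r-\vx_s| - |\vx_r-\vy|$ for the sesquilinear term.

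The heart of the argument is a uniform lower bound on $|\nabla_{\vx_{r,\parallel}}\Phi|$. Since $\nabla_{\vx_{r,\parallel}}|\vx_r-\vz| = (\vx_{r,\parallel}-\vz_\parallel)/|\vx_r-\vz|$ has magnitude $|\vx_{r,\parallel}-\vz_\parallel|/|\vx_r-\vz|$, the constant $c$ in \eqref{eq:c} is precisely the maximal cross-range slope, so for any $\vy,\vy_m,\vy_n \in \cW$ and any $\vx_r\in\cA$ each such gradient has magnitude $\le c$. Meanwhile the hypothesis $\vx_s\notin\cR(\gamma)$ is, by \eqref{eq:geoimg}, exactly the statement that $|\vx_{r,\parallel}-\vx_{s,\parallel}| > c\gamma|\vx_r-\vx_s|$ for all $\vx_r\in\cA$; by compactness of $\cA$ this gives $|\nabla_{\vx_{r,\parallel}}|\vx_r-\vx_s|| \ge c\gamma + \delta$ for some $\delta>0$, uniformly. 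The triangle inequality then yields $|\nabla\Phi_A| \ge 2(c\gamma+\delta) - 2c \ge 2\delta > 0$ as soon as $\gamma=1$; under the Born approximation the sesquilinear term is higher order in $\bGa$ and is discarded, so $\gamma=1$ suffices. If the second-order scattering term is retained, the same estimate applied to $\Phi_B$ gives $|\nabla\Phi_B| \ge (c\gamma+\delta) - 3c$, which forces the stronger requirement $\gamma=3$.

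With a phase whose gradient is bounded below uniformly on $\cA\times\cW$ and an amplitude with bounded derivatives, the conclusion follows from one integration by parts in $\vx_{r,\parallel}$ (non-stationary phase): each elementary term is $O(k^{-1})=o(1)$, and the finite sum over scatterers preserves this. The boundary contributions generated by integrating by parts over the bounded array are themselves $O(k^{-1})$ and hence harmless for the claimed $o(1)$ bound. I expect the main obstacle to be the rigorous bookkeeping of the matrix amplitudes --- in particular verifying that $\widetilde{\vG}^{-*}$ and the $m(kr)$ corrections in \eqref{eq:dyadic} have $k$-uniformly bounded cross-range derivatives, so that a single scalar phase genuinely governs the oscillation; this is exactly where the invertibility and condition-number control of \cref{app:gtilde} are essential. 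Upgrading the pointwise positivity of $|\nabla\Phi|$ to a uniform bound likewise relies on compactness of $\cA$ and $\cW$ together with the strictness of the exclusion $\vx_s\notin\cR(\gamma)$.
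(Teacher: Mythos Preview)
Your proposal is correct and follows essentially the same route as the paper: reduce to $\cI_{KM}[\vq]\to 0$, split $\vq$ into antilinear and sesquilinear pieces, extract the scalar phases, and use the exclusion $\vx_s\notin\cR(\gamma)$ to bound $|\nabla_{\vx_{r,\parallel}}\Phi|$ away from zero so that non-stationary phase applies. The only noteworthy difference is that the paper assumes the amplitude is smooth and compactly supported in $\cA$ (so boundary terms vanish and the decay is faster than any power of $k^{-1}$), whereas you keep the sharp array cutoff, do a single integration by parts, and absorb the boundary contributions into the $O(k^{-1})$ estimate; both approaches give the claimed $o(1)$.
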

\begin{proof}
We recall from \eqref{eq:prepro:result} that the preprocessing of
the coherency matrix data $\vPsi$ gives  $\vU_\parallel \widetilde{\vPi}
\vU_s^*$ plus the error $\vq$ given in \eqref{eq:prepro:error}. Hence it
suffices to show that 
\begin{equation}
    \cI_{KM}[\vq;k](\vy) \to 0,~\mbox{as}~k\to \infty.
    \label{eq:km:err}
\end{equation}
To show how to handle multiple scattering, we use the second Born
approximation. Higher order (but finite) Born approximations can be
considered with identical hypothesis. We assume the
scattered field is $\vPi = \vPi_1 + \vPi_2$ where $\vPi_1$ (resp.
$\vPi_2$) consists of single (resp. double) scattering events. From
\eqref{eq.exacttotalfieldpoitnscatterers} $\vPi_2$ is given by 
\begin{equation}\label{eq::data1stcorrection}
 \vPi_2(\vx_r,\vx_s;k)
 = \sum_{n,m,m\neq n} \vG(\vx_r,\vy_n;k) \, \bGa(\vy_n)  \ \vG(\vy_n,\vy_m;k)\, \bGa(\vy_m) \, \vG(\vy_m,\vx_s;k).
\end{equation}
The preprocessing error \eqref{eq:prepro:error} is
the sum of the terms 
\begin{equation}
\vq_j = \vU_\parallel \widetilde{\vG} \widetilde{\vJ}_s
\widetilde{\vPi}_j^* \widetilde{\vG}^{-*} \widetilde{\vJ}_s^{-1} \vU_s^*
~\text{and}~
\vq_{i,j}  = \vU_\parallel \widetilde{\vPi}_i \widetilde{\vJ}_s
\widetilde{\vPi}_j^* \widetilde{\vG}^{-*} \widetilde{\vJ}_s^{-1}
\vU_s^*,
~i,j=1,2.
\end{equation}
{\bf Error term $\vq_1$:} The Kirchhoff image of $\vq_1$ is a sum of oscillatory integrals of the
form 
\begin{equation}
   \int d\vx_{r,\parallel} \vC(\vx_r,\vx_s;k) \exp[\imath k \phi(\vx_r,\vy)],
   \label{eq:statphase1}
\end{equation}
 where the phase is
 \begin{equation}
   \phi(\vx_r,\vy) = 2|\vx_r - \vx_s| - |\vx_r - \vy_*| - |\vx_s - \vy_*| - |\vx_r - \vy| - |\vx_s - \vy|,
   \label{eq:phase1}
 \end{equation}
 where $\vy_* \in \cW$ is one of scatterers
 and $\vC(\vx_r,\vx_s;k)$ is a complex symmetric matrix that is smooth
 for $\vx_r \in \cA$  and is supported in $\cA$ (as a function of
 $\vx_r$). We make this assumption to ignore any boundary terms arising
 from the stationary phase method. The ``amplitude'' matrix
 $\vC(\vx_r,\vx_s;k)$ can be expanded as a power series for $k \neq 0$:
 \begin{equation}
   \vC(\vx_r,\vx_s;k) = \sum_{j=0}^\infty k^{-j}\vC_j(\vx_r,\vx_s),
   \label{eq:amplitude}
 \end{equation}
 uniformly for $\vx_r \in \cA$. The matrix valued terms
 $\vC_j(\vx_r,\vx_s)$ are independent of $k$ and are smooth in $\vx_r$
 because $\vx_r \neq \vx_s$. Their explicit expression does not matter
 for the argument. We can apply the stationary phase method to
 approximate \eqref{eq:statphase1} because $\vC(\vx_r,\vx_s;k)$ and all
 its derivatives with respect to $\vx_{r,\parallel}$ are bounded as
 $k\to \infty$ (see e.g. \cite{deHoop:2018:MSP}). We deduce that
 $\cI_{KM}[\vq_1](\vy) \to 0$ as $k\to \infty$ provided there are no
 points $\vx_r \in \cA$ for which the phase \eqref{eq:phase1} is
 stationary, i.e. there are no points $\vx_r \in \cA$ such that the gradient
 \begin{equation}
   \nabla_{\vx_{r,\parallel}} \phi = 
     2 \frac{\vx_{r,\parallel} - \vx_{s,\parallel}}{|\vx_r - \vx_s|} 
     - \frac{\vx_{r,\parallel} - \vy_{*,\parallel}}{|\vx_r - \vy_*|}
     - \frac{\vx_{r,\parallel} - \vy_{\parallel}}{|\vx_r - \vy|},
     \label{eq:geoimg1}
\end{equation}
vanishes. To guarantee there are no stationary points\footnote{The
stationary argument we use here corrects the similar one used in
\cite{Bardsley:2016:KIW}, which has a region $\cR$ that may be too
small.}, we use the known positions of the array and the imaging window
to define the region $\cR(1)$ in \eqref{eq:geoimg} as a union for $\vx_r
\in \cA$ of the cones $\{\vy ~|~ |\vx_{r,\parallel} - \vy_\parallel |
\leq c |\vx_r - \vy|\}$, where $c$ is chosen so that each cone contains
the imaging window $\cW$ and is given by \eqref{eq:c}, see
\cref{fig:geoimg}. The expression \eqref{eq:c} comes from an elementary
geometric argument involving the cones with tips at the boundary of the
array (in dashed line in \cref{fig:geoimg}). If $\vx_r$ is a stationary point then there are
points $\vy,\vy_* \in \cW$ for which \eqref{eq:geoimg1} is zero.
Isolating the term corresponding to $\vx_s$ and taking norms we get
\begin{equation}
2 \frac{|\vx_{r,\parallel} - \vx_{s,\parallel}|}{|\vx_r - \vx_s|} 
\leq
     \frac{|\vx_{r,\parallel} - \vy_{*,\parallel}|}{|\vx_r - \vy_*|}
   + \frac{|\vx_{r,\parallel} - \vy_{\parallel}|}{|\vx_r - \vy|}
\leq 2 c,
\label{eq:xsincone}
\end{equation}
where the last inequality follows from $\vy,\vy_* \in \cW \subset \cR(1)$.
We conclude that $\vx_s \in \cR(1)$. To summarize, if $\vx_s \notin
\cR(1)$, the Kirchhoff image of $\vq_1$ vanishes as $k \to \infty$
(faster than any polynomial in $k^{-1}$).

{\bf Weak scattering assumption:} If we operate under the weak
scattering assumption, we are done since we can neglect $\vq_2$,
$\vq_{i,j}$ and $\vPi_1 \vPi_1^*$.

{\bf Error term $\vq_2$:} The term $\vq_2$ is composed of oscillatory integrals of the form
\eqref{eq:statphase1} but with phase given by
\begin{equation}
 \phi(\vx_r,\vy) = 2|\vx_r - \vx_s| - |\vx_r - \vy_n| - |\vy_n - \vy_m| -
 |\vy_m - \vx_s| - |\vx_r - \vy| - |\vy - \vx_s|.
 \label{eq:phase2}
\end{equation}
The gradient of \eqref{eq:phase2} with respect to $\vx_{r,\parallel}$ is
given by
\begin{equation}
\nabla_{\vx_{r,\parallel}} \phi  = 
    2\frac{\vx_{r,\parallel} - \vx_{s,\parallel}}{|\vx_r - \vx_s|} 
     - \frac{\vx_{r,\parallel} - \vy_{n,\parallel}}{|\vx_r - \vy_n|}
     - \frac{\vx_{r,\parallel} - \vy_{\parallel}}{|\vx_r - \vy|}.
\end{equation}
If  there are points $\vy_n,\vy
\in \cW \subset \cR(1)$ such that $\nabla_{\vx_{r,\parallel}} \phi  =
0$, we can conclude in a way similar to \eqref{eq:xsincone} that
$\vx_s \in \cR(1)$. Taking $\vx_s \notin \cR(1)$ guarantees
that there are no stationary phase points.

{\bf Error term $\vq_{i,j}$:} This term is composed of oscillatory
integrals of the form \eqref{eq:statphase1} with phase
\begin{equation}
 \phi(\vx_r,\vy) = |\vx_r - \vx_s| + \phi_i -
 \phi_j' - |\vx_r - \vy| - |\vx_s - \vy|,~i,j=1,2
\end{equation}
where $\phi_1 = |\vx_r - \vy_m| + |\vy_m - \vx_s|$, $\phi_2 = |\vx_r
- \vy_{m}| + |\vy_{m} - \vy_{n}| + |\vy_{n} - \vx_s|$, and $m\neq n$ are
scatterer indices. By $\phi_j'$ we mean putting on the indices $n$ and
$m$, to represent taking a possibly different path
among the scatterers. Its gradient
with respect to $\vx_{r,\parallel}$ is
\begin{equation}
 \nabla_{\vx_{r,\parallel}} \phi = 
  \frac{\vx_{r,\parallel} - \vx_{s,\parallel}}{|\vx_r - \vx_s|} 
 + \frac{\vx_{r,\parallel} - \vy_{m,\parallel}}{|\vx_r - \vy_m|}
 - \frac{\vx_{r,\parallel} - \vy_{m',\parallel}}{|\vx_r - \vy_{m'}|}
 - \frac{\vx_{r,\parallel} - \vy_\parallel}{|\vx_r - \vy|}.
 \label{eq:qijgradphase}
\end{equation}
If there are points $\vy_m,\vy_{m'},\vy \in \cW \subset \cR(1)$ such that
$\nabla_{\vx_{r,\parallel}} \phi = 0$ then proceeding as in
\eqref{eq:xsincone} we get
\begin{equation}
  \frac{|\vx_{r,\parallel} - \vx_{s,\parallel}|}{|\vx_r - \vx_s|} 
  \leq
  \frac{|\vx_{r,\parallel} - \vy_{m,\parallel}|}{|\vx_r - \vy_m|}
 + \frac{|\vx_{r,\parallel} - \vy_{m',\parallel}|}{|\vx_r - \vy_{m'}|}
 + \frac{|\vx_{r,\parallel} - \vy_\parallel|}{|\vx_r - \vy|}
 \leq 3c.
\end{equation}
We conclude $\vx_s \in \cR(3)$ and that taking $\vx_s \notin \cR(3)$
guarantees the absence of stationary points.
\end{proof}

\begin{remark}
Let us call $\vq_j$ the antilinear term involving the $j-$th term in the
Born series and $\vq_{i,j}$ the sesquilinear term involving the $i-$th
and $j-$th terms. Clearly $\nabla_{\vx_{r,\parallel}} \vq_j$ is of the
form \eqref{eq:geoimg1} and  $\nabla_{\vx_{r,\parallel}} \vq_{i,j}$ is
of the form \eqref{eq:qijgradphase}. This is because for any path going
from $\vx_r$, going through a chain of scatterers and then to $\vx_s$,
there is only one segment involving $\vx_r$. Hence the proof of
\cref{thm:geoimg} can be modified to include these higher order terms.
\end{remark}

\begin{remark}
The region $\cR(\gamma)$ that we have to avoid for placing the source in
\cref{thm:geoimg} is a worst case scenario
that assumes no knowledge about the position of the scatterers, other
than them being in $\cW$. This exclusion region can be
smaller since the scatterers are point-like.
\end{remark}

\begin{remark}
Notice that in the proof of \cref{thm:geoimg}, all the phase terms of
the error in the Kirchhoff imaging function never vanish. This is not
the case for the phase of the oscillatory integral appearing in the
image $\cI_{KM}(\vU_\parallel \widetilde{\vPi} \vU_s^*)$. This is
essentially the travel time argument that can be used to motivate
Kirchhoff image (see \cite{Blei:2013:MSI}). In the next section we do a
resolution analysis of the Kirchhoff imaging function, i.e.
characterizing the typical size of a focal spot (in both range and
cross-range).
\end{remark}

\section{Fraunhofer asymptotic analysis}
\label{sec:fraunhofer}

We begin the study of the Kirchhoff imaging function in the Fraunhofer diffraction
regime by giving the assumptions we make about the length scales of the
problem for both the array (\cref{sec.asympparam}) and the source
(\cref{sec.srcasymp}). Under these assumptions, we give expressions for
the data and the imaging function (\cref{sec.kmasymp}) that allow us to
get resolution estimates in both cross-range (i.e. plane parallel to the
array, see \cref{sec.crossrangepola}) and range (i.e. direction normal
to the array, see \cref{sec.rangeesti}). Extracting the polarizability
tensor data from the Kirchhoff images gives oscillatory artifacts that
we correct in \cref{sec.phasecorrection}. Finally we explain in
\cref{sec.physmeas} why it is possible to image with the full data
projected on an appropriate basis.

\begin{figure}
 \begin{center} 
  \includegraphics[width=0.8\textwidth]{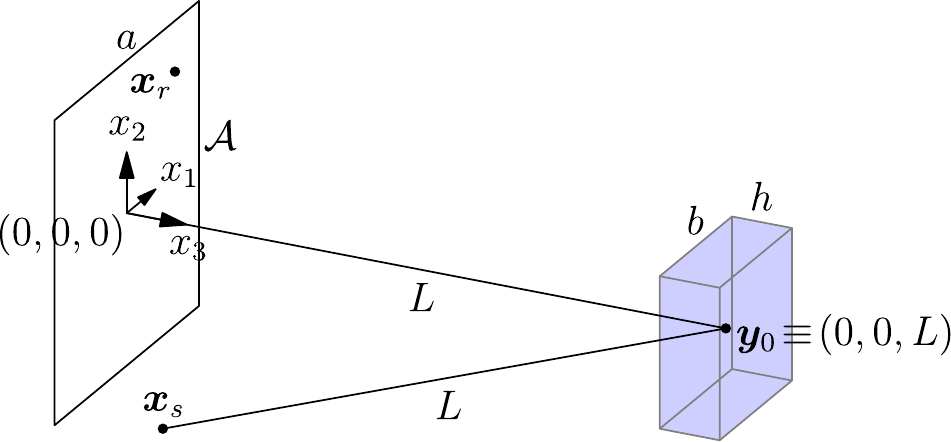}
 \end{center}
 \caption{Fraunhofer regime scalings. The array $\mathcal{A}$ has side
 $a$ and lies in the $x_1,x_2$ plane and the imaging window is a box of dimensions $b$ in cross-range
 and $h$ in range. The point $\vy_0 = (0,0,L)$ is the reference point
 for the imaging window. The source $\vx_s$ is not in the array and
 $|\vx_s - \vy_0| = L$.}
 \label{fig:fraunhofer}
\end{figure}

\subsection{Length scales for Fraunhofer asymptotic regime}
\label{sec.asympparam}
Let $\vy=(\vy_{\parallel}, L+\eta)$ and $\vx_r=(\vx_{r,\parallel},0)$ be
respectively an imaging point and a point in the array, with $L$ being the characteristic
propagation distance. We assume that  the scatterers lie in
a known imaging window  of
characteristic size $b$ in cross-range and $h$ in range, i.e. $|
\vy_{\parallel} |= \cO(b) ~\text{and}~  | \eta |=\cO (h)$, as illustrated
in \cref{fig:fraunhofer}. For the Fraunhofer asymptotic regime we assume
the following scalings hold (see e.g.  \cite{Borcea:2008:EII,Born:1959:POE}).
\begin{itemize}
\item  $kL \gg 1$, (high frequency or large propagation distance)
\item
Fresnel number $\Theta_{a} \equiv \displaystyle \frac{k a^2}{L}\ll k L$,
i.e. small aperture: $a\ll L$,
\item Fresnel number $\Theta_{b} \equiv \displaystyle \frac{k b^2}{L}\ll
kL$, i.e. small imaging window in cross-range: $b\ll L,$
\item Fresnel number $\Theta_{h} \equiv \displaystyle \frac{k h^2}{L}\ll
kL$, i.e.  small imaging window in range $h\ll L$.
\end{itemize}
Moreover we assume that
\[
 \Theta_{b} \ll 1,  ~ %
  ~ 1 \ll \Theta_{a} \ll
 \frac{L^2}{a^2 }, \mbox{ and }   kh=\cO( 1 ),
\]
namely that the imaging window is small compared to
the array aperture (i.e. $b\ll a$) and that the range component $\eta$ of the imaging point is small or of the order of the wavelength $2
\pi/k$.

\subsection{Asymptotic analysis related to the source}
\label{sec.srcasymp}
Let  $\vy_0 \equiv (0,0,L)$ be a reference point in the imaging window.
We assume for simplicity that the source-to-scatterer and
array-to-scatterer distances are identical, i.e. $|\vx_s-\vy_0|=L$.
Nevertheless, all the asymptotic results we present here hold if $L$ and
$|\vx_s-\vy_0|$ have the same order of magnitude. With this assumption,
it is easy to check that the distance between the source $\vx_s$ and an
imaging point $\vy$ satisfies:
\begin{equation}\label{eq.dsitsource}
 |\vy-\vx_s|=L\Big[1+\cO\Big(\frac{b}{L}\Big)+\cO\Big(\frac{h}{L}\Big)\Big] .
\end{equation}

\subsection{Fraunhofer asymptotic analysis of the Kirchhoff imaging function}
\label{sec.kmasymp}
In the Fraunhofer regime (see\cite[eq. (8)]{Cassier:2017:IPD}), the
dyadic Green function between an imaging point $\vy$ and a receiver
$\vx_r$ on the array is given by
\begin{eqnarray}\label{eq.asymptGreenarray}
\vG(\vx_r, \vy;k)
&=& \widetilde{G}(\vx_r, \vy;k)
\Big[\vP(\vx_r,\vy)+\cO\Big(\frac{a^2\Theta_a}{L^2}\Big)+
\cO(\Theta_b)\Big],
\end{eqnarray}
where $\widetilde{G}(\vx_r, \vy;k)$ is the Fraunhofer or paraxial
approximation of the acoustic Green function: 
\[
\widetilde{G}(\vx_r, \vy;k) \equiv \frac{1}{4\pi L}  ~
\exp\Mb{\imath \big( k L+\frac{k\,|\vx_r|^2}{2
L}+\frac{k\vx_{r,\parallel}\cdot\vy_{\parallel}}{L}+k\eta \big)},
\]
and $\vP(\vx,\vy)$ is the orthogonal projector defined by
\[
\vP(\vx,\vy)=\vI-\frac{(\vx-\vy)(\vx-\vy)^{\tr}}{|\vx-\vy|^2},
\]
with $\vI$ being the  $3\times3$ identity matrix. For convenience
we introduce two particular projectors: the projector $\vP_{\parallel}$
on the cross-range direction of $\cA$ and the projector  $\vP_s$ on
$(\vy_0 - \vx_s)^\perp$. We write these projectors in terms of an
orthonormal basis of their range
\begin{equation}\label{eq.projectdef}
\vP_{\parallel}=\vU_{\parallel}\, \vU_{\parallel}^{*} \mbox{ and } \,
\vP_s=\vP(\vx_s,\vy_0)=\vU_s\, \vU_s^{*},
\end{equation}
where $\vU_\parallel\equiv [\ve_1,\ve_2]$ is a $3\times2$ matrix and the columns of the $3\times2$ matrix $\vU_s$
form an orthonormal basis of $(\vy_0 - \vx_s)^\perp$, that we assume is
chosen a priori.

In this regime, the projector $\vP(\vx_r,\vy)$ can be approximated by
\begin{equation}
\vP(\vx_r,\vy)=\vP_{\parallel}+\cO\Big(\frac{a}{L}\Big),
\label{eq:Pxry}
\end{equation}
since $b=o(a)$, $\cO(h/L)=\cO(1/kL)=\cO(a^2/(L^2\Theta_a))=o(a^2/L^2)$
and $|\vx_r-\vy|^{-2}=L^{-2}(1+\cO(a^2/L^2))$.  Thus, relation
\eqref{eq.asymptGreenarray} simplifies to:
\begin{equation}\label{eq.asymptgreenarray}
\vG(\vx_r, \vy;k)
= \widetilde{G}(\vx_r, \vy;k)
\Big[\vP_{\parallel}+\cO\Big(\frac{a^2\Theta_a}{L^2}\Big)+
\cO(\Theta_b)+\cO\Big(\frac{a}{L}\Big) \Big].
\end{equation}

Now, it follows by using \eqref{eq.asymptgreenarray} and integrating on
the array that the matrix $\vH_r\left(\vy,\vy_n;k\right)$ (defined in
\eqref{eq:H}) admits the following asymptotic expansion: 
\begin{equation}\label{eq.spreadfunctionarray}
\vH_r\left(\vy,\vy_n;k\right)=\widetilde{\vH}_r\left(\vy,\vy_n;k\right)
+ \cO\Big(\frac{a^4\Theta_a}{L^4}\Big)+ \cO\Big(\frac{a^2\Theta_b}{L^2}
\Big)+\Big(\frac{a^3}{L^3} \Big),
\end{equation}
where for $\vy=(\vy_{\parallel},L+\eta)$ and $\vy'=(\vy'_{\parallel},L+\eta')$,
 $\widetilde{\vH}_r\left(\vy,\vy';k\right)$ is given by
\begin{eqnarray}\label{eq.defhraproxs}
\widetilde{\vH}_r\left(\vy,\vy';k\right)&=&\frac{\exp[\imath k(\eta'-\eta)]}{(4\pi
L)^2}\int_{\cA} d\vx_{r,\parallel}\, \exp\Mb{\imath k
\left(\frac{\vx_{r,\parallel}\cdot(\vy'_{\parallel}-\vy_{\parallel})}{L}\right)}   \vP_{\parallel}  \nonumber \\
&=&\frac{\exp[\imath k(\eta'-\eta)]}{(4\pi L)^2}  \,
\cF[\mathds{1}_{\cA}]\Big(\frac{k}{L}(\vy'_{\parallel}-\vy_{\parallel})\Big)
\vP_{\parallel}.
\end{eqnarray}
Here $\mathds{1}_{\cA}$ denotes the indicator
function of the array and $\cF[f]$ is the Fourier transform of an
integrable function $f$, defined using the convention 
$$
\cF[f](\vxi_{\parallel})=\int_{\real^2} f(\vx_{\parallel}) \exp[\imath \vx_{\parallel} \cdot \vxi_{\parallel} ]  \,
\, d \vx_{\parallel}, \   \forall  f\in L^1(\mathbb{R}^2).
$$

The dyadic Green function between an imaging point $\vy$ and the source
has asymptotic expansion \cite[eq. (7)]{Cassier:2017:IPD}
\begin{equation}\label{eq.asym1}
\vG(\vx_s, \vy;k)=G(\vx_s, \vy;k)
\Big[\vP(\vx_s,\vy)+\cO\Big(\frac{1}{kL} \Big)\Big].
\end{equation}
One can also show using \eqref{eq.dsitsource} that the projector $\vP(\vx_s,\vy)$ can be approximated in the Fraunhofer regime by
\begin{equation}\label{eq.asym2}
\vP(\vx_s,\vy)=\vP_s+\cO\Big(\frac{b}{L}\Big)+
\cO\Big(\frac{h}{L}\Big).
\end{equation}
Using \eqref{eq.asym1}, \eqref{eq.dsitsource} and \eqref{eq.asym2} we
get that
\begin{eqnarray}\label{eq.aproxsource}
\vG(\vx_s, \vy;k)&=& \frac{ e^{\imath k |\vx_s-\vy|}}{4 \pi L}
\Big[1+\cO\Big(\frac{b}{L}\Big)+\cO\Big(\frac{h}{L}\Big)\Big]
\Big[\vP_s+ \cO\Big(\frac{b}{L}\Big)+\cO\Big(\frac{1}{k\, L}\Big) \Big] \nonumber\\
&=&  \frac{ e^{\imath k |\vx_s-\vy|}}{ 4\pi L}
\Big[\vP_s+O\Big(\frac{b}{L}\Big)+\cO\Big(\frac{a^2}{\Theta_a
L^2}\Big)\Big],
\end{eqnarray}
since $kh=\cO( 1 )$ implies that
$\cO(h/L)=\cO\left(1/(kL)\right)=\cO\left(a^2/(\Theta_a
L^2)\right)$.
Thus, the matrix $\vH_s\left(\vy,\vy_n;k\right)$ admits the asymptotic expansion:
\begin{equation}\label{eq.asympspreadfunctionsource}
\vH_s\left(\vy,\vy_n;k\right)= \frac{1}{(4\pi L)^2} \Big[\widetilde{\vH}_s\left(\vy,\vy_n;k\right)+O\Big(\frac{b}{L}\Big)+\cO\Big(\frac{a^2}{\Theta_a L^2}\Big)\Big],
\end{equation}
where for $\vy=(\vy_{\parallel},L+\eta)$ and $\vy'=(\vy'_{\parallel},L+\eta')$:
\begin{equation}\label{eq.Hsasymp}
\widetilde{\vH}_s\left(\vy,\vy';k\right)= \exp \left[ \imath k (|\vx_s-\vy'|-|\vx_s-\vy|)\right] \vP_s.
\end{equation}
We assume from now on that all polarizability tensors have the same
order of magnitude and that $\|\bGa_n\|=\cO(1)$ for $n=1, \ldots, N$.
The main result of this section is the following asymptotic expansion.
\begin{proposition}\label{prop.asymKI}
The Kirchhoff imaging function applied to full array data from $N$
scatterers admits the asymptotic expansion
\begin{equation*}
\cI_{KM}(\vy;k)=   \frac{1}{(4\pi L)^2} \left[\sum_{n=1}^{N} \widetilde{\vH}_r\left(\vy,\vy_n;k\right) \,
\bGa(\vy_n) \,
\widetilde{\vH}_s\left(\vy,\vy_n;k\right)^{\top}+o(1)\right]
\end{equation*}
in the Fraunhofer regime. The remainder $o(1)$ is given explicitly with respect to the Fraunhofer parameters by:
 $\cO\left(a^4\Theta_a /L^4\right)+\cO( a^2\Theta_b /L^2)+ \cO\left(a^3 /L^3\right)$.
\end{proposition}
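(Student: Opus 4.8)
The plan is to start from the exact expression \eqref{eq.defimagingfuction} for the Kirchhoff image of full array data,
\[
\cI_{KM}(\vy;k) = \sum_{n=1}^N \vH_r(\vy,\vy_n;k)\,\bGa(\vy_n)\,\vH_s(\vy,\vy_n;k)^\top,
\]
and simply substitute into it the two Fraunhofer expansions of the spread functions already established in \eqref{eq.spreadfunctionarray} and \eqref{eq.asympspreadfunctionsource}. Writing $\vH_r = \widetilde{\vH}_r + R_r$ with $R_r = \cO(a^4\Theta_a/L^4) + \cO(a^2\Theta_b/L^2) + \cO(a^3/L^3)$, and $\vH_s = (4\pi L)^{-2}(\widetilde{\vH}_s + R_s)$ with $R_s = \cO(b/L) + \cO(a^2/(\Theta_a L^2))$, the product factors as
\[
\vH_r\,\bGa\,\vH_s^\top = \frac{1}{(4\pi L)^2}\Big[\widetilde{\vH}_r\,\bGa\,\widetilde{\vH}_s^\top + \widetilde{\vH}_r\,\bGa\,R_s^\top + R_r\,\bGa\,\widetilde{\vH}_s^\top + R_r\,\bGa\,R_s^\top\Big].
\]
The first term reproduces the announced leading order expression, so the whole task reduces to bounding the three cross terms, summed over the finitely many scatterers, by the stated remainder.

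Before estimating I would record the sizes of the leading factors. From \eqref{eq.defhraproxs}, $\widetilde{\vH}_r$ equals $(4\pi L)^{-2}$ times $\cF[\mathds{1}_\cA]$ evaluated at a point times the projector $\vP_\parallel$; bounding $|\cF[\mathds{1}_\cA]|$ by the array area $a^2$ yields $\widetilde{\vH}_r = \cO(a^2/L^2)$. Likewise $\widetilde{\vH}_s = \cO(1)$ from \eqref{eq.Hsasymp} and $\bGa(\vy_n) = \cO(1)$ by assumption. With these bounds $R_r\,\bGa\,\widetilde{\vH}_s^\top$ already has exactly the target size $\cO(a^4\Theta_a/L^4) + \cO(a^2\Theta_b/L^2) + \cO(a^3/L^3)$, while $\widetilde{\vH}_r\,\bGa\,R_s^\top = \cO(a^2 b/L^3) + \cO(a^4/(\Theta_a L^4))$ and $R_r\,\bGa\,R_s^\top$ (a product of two remainders) is manifestly smaller.

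The only genuine bookkeeping step is to check that the two pieces of $\widetilde{\vH}_r\,\bGa\,R_s^\top$ are swallowed by the target remainder, and this is where the Fraunhofer hypotheses enter: $b \ll a$ gives $a^2 b/L^3 \ll a^3/L^3$, and $1 \ll \Theta_a$ gives $a^4/(\Theta_a L^4) \ll a^4\Theta_a/L^4$. Every cross term is therefore dominated by $\cO(a^4\Theta_a/L^4) + \cO(a^2\Theta_b/L^2) + \cO(a^3/L^3)$, which is the advertised remainder once the factor $(4\pi L)^{-2}$ is restored and the $N$ scatterers summed. To finish I would confirm this remainder is genuinely $o(1)$ in the regime: $a^2\Theta_b/L^2 \ll 1$ since $a \ll L$ and $\Theta_b \ll 1$, $a^3/L^3 \ll 1$ since $a \ll L$, and $a^4\Theta_a/L^4 = (a^2/L^2)^2\Theta_a \ll a^2/L^2 \ll 1$ using $\Theta_a \ll L^2/a^2$. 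The main obstacle, such as it is, is the area bound $\widetilde{\vH}_r = \cO(a^2/L^2)$: without it the term $\widetilde{\vH}_r\,\bGa\,R_s^\top$ would not obviously fit the target, whereas everything else is a mechanical expansion and comparison of Fraunhofer scales.
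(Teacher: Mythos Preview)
Your proposal is correct and follows exactly the approach the paper takes: the paper's proof is a one-line remark that the result is an immediate consequence of the exact expression \eqref{eq.defimagingfuction} combined with the two Fraunhofer expansions \eqref{eq.spreadfunctionarray} and \eqref{eq.asympspreadfunctionsource}. You have simply made this explicit by expanding the product, bounding the cross terms, and checking with the Fraunhofer scalings that they are absorbed into the stated remainder---which is more detail than the paper itself provides.
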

\begin{proof}
The proof is an immediate consequence of the definition of the imaging
function \eqref{eq.defimagingfuction} and the asymptotic  formulas \eqref{eq.spreadfunctionarray} and \eqref{eq.asympspreadfunctionsource}.
\end{proof}

\subsection{Cross-range estimates of position and polarizability tensor}\label{sec.crossrangepola}
To study the cross-range resolution of the Kirchhoff image we consider,
without loss of generality, the case of a single dipole located at $\vy_*
= (\vy_{*,\parallel},L + \eta_*)$ with associated polarizability tensor
$\bGa_{*}$.  The following proposition characterizes the decay of the
Kirchhoff image in the cross-range away from the dipole position. 
\begin{proposition}[Imaging
function decrease in cross-range] \label{prop.crossrang}
The Kirchhoff imaging function \eqref{eq.defimagingfuction} of a dipole located at
$\vy_*=(\vy_{*,\parallel}, L+\eta_*)$ and evaluated at  $\vy=(\vy_{\parallel},
L+\eta_{*})$ satisfies
\begin{itemize}
\item  If the imaging point does not coincide with the dipole in cross-range, ie $\vy_{\parallel}\neq \vy_{*,\parallel}$ then
\begin{equation}\label{eq.asymp1}
 \|\cI_{KM}(\vy;k)\| =  \frac{1}{(4\pi L)^2} \left[  \frac{ a^2}{L^2}  \, \left( \cO\Big(  \frac{ L}{
 a \, k   |\vy_{\parallel}-\vy_{*,\parallel} |}\Big) +o(1) \right)\right],
\end{equation}
for any matrix norm and where $o(1)$ is explicitly given by $\cO\left(a^2\Theta_a /L^2\right)+\cO( \Theta_b)+ \cO\left(a/L\right)$.
\item  If the imaging point  coincides with the dipole in cross-range,
i.e. $\vy_{\parallel}= \vy_{*,\parallel}$ then
\begin{eqnarray}\label{eq.asymp2}
\cI_{KM}(\vy_*;k)=\ \frac{1}{(4\pi L)^2} \Big[\frac{  \operatorname{mes} \cA}{(4\pi L)^2} \vP_{\parallel}  \, \bGa_* \, \vP_s+o\Big(\frac{a^2}{L^2}\Big)\Big],
\end{eqnarray}
where  $\operatorname{mes}  \cA = \Theta(a^2)$ is the area of the array
$\cA$.\footnote{The notation $f(x) = \Theta(g(x))$ means that there are
constants $c,C>0$ such that $cg(x) \leq f(x) \leq Cg(x)$ in an
appropriate limit for $x$.}
\end{itemize}
\end{proposition}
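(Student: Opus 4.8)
The plan is to specialize \cref{prop.asymKI} to the single scatterer $\bGa_*$ at $\vy_*$ and read the two regimes directly off the explicit kernels. For $N=1$ that proposition gives
\begin{equation*}
\cI_{KM}(\vy;k) = \frac{1}{(4\pi L)^2}\Mb{\widetilde{\vH}_r(\vy,\vy_*;k)\,\bGa_*\,\widetilde{\vH}_s(\vy,\vy_*;k)^\tr + o(1)},
\end{equation*}
with remainder $o(1) = \cO(a^4\Theta_a/L^4) + \cO(a^2\Theta_b/L^2) + \cO(a^3/L^3)$. Because the imaging point $\vy=(\vy_\parallel,L+\eta_*)$ and the dipole $\vy_*=(\vy_{*,\parallel},L+\eta_*)$ share the same range coordinate, the factor $\exp[\imath k(\eta'-\eta)]$ in \eqref{eq.defhraproxs} equals $1$; substituting \eqref{eq.defhraproxs} and \eqref{eq.Hsasymp} and using $\vP_s^\tr=\vP_s$ I would obtain
\begin{equation*}
\widetilde{\vH}_r(\vy,\vy_*;k)\,\bGa_*\,\widetilde{\vH}_s(\vy,\vy_*;k)^\tr = \frac{\cF[\mathds{1}_\cA]\M{\tfrac{k}{L}(\vy_{*,\parallel}-\vy_\parallel)}}{(4\pi L)^2}\, e^{\imath k(|\vx_s-\vy_*|-|\vx_s-\vy|)}\,\vP_\parallel\bGa_*\vP_s .
\end{equation*}
Hence the entire cross-range profile is carried by the scalar $\cF[\mathds{1}_\cA]$ evaluated at the scaled offset, while $\|\vP_\parallel\bGa_*\vP_s\|=\cO(1)$ for any matrix norm.

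Next I would evaluate the array transform. For the square array $\cA=[-a/2,a/2]^2$ the integral factors into one-dimensional pieces, so that with $\vxi_\parallel = \tfrac{k}{L}(\vy_{*,\parallel}-\vy_\parallel)$ and components $\xi_1,\xi_2$ one has $\cF[\mathds{1}_\cA](\vxi_\parallel) = a^2\sinc(a\xi_1/2)\sinc(a\xi_2/2)$, which exhibits the common prefactor $a^2/(4\pi L)^2$ responsible for the $a^2/L^2$ pulled out of both \eqref{eq.asymp1} and \eqref{eq.asymp2}. For the coincident case $\vy_\parallel=\vy_{*,\parallel}$ the offset vanishes, so $\cF[\mathds{1}_\cA](\vzero)=a^2=\operatorname{mes}\cA$ and the exponential equals $1$ (since then $\vy=\vy_*$); this produces the leading term $\frac{\operatorname{mes}\cA}{(4\pi L)^2}\vP_\parallel\bGa_*\vP_s$ of \eqref{eq.asymp2}. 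For the non-coincident case I would invoke $|\sinc(t)|\le\min(1,1/|t|)$: selecting the component $i$ of $\vy_\parallel-\vy_{*,\parallel}$ of largest modulus (which is nonzero since $\vy_\parallel\neq\vy_{*,\parallel}$), bounding that factor by $1/|a\xi_i/2|$ and the other by $1$ gives
\begin{equation*}
\left|\sinc\M{\tfrac{a\xi_1}{2}}\sinc\M{\tfrac{a\xi_2}{2}}\right| \le \frac{2L}{ak\,\max_i|y_i-y_{*,i}|} \le \frac{2\sqrt{2}\,L}{ak\,|\vy_\parallel-\vy_{*,\parallel}|},
\end{equation*}
using $\max_i|y_i-y_{*,i}|\ge|\vy_\parallel-\vy_{*,\parallel}|/\sqrt2$. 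This is exactly the claimed $\cO\M{L/(ak|\vy_\parallel-\vy_{*,\parallel}|)}$ decay of \eqref{eq.asymp1}.

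Finally I would track the remainder. Dividing the $o(1)$ of \cref{prop.asymKI} by the pulled-out $a^2/L^2$ turns $\cO(a^4\Theta_a/L^4)$, $\cO(a^2\Theta_b/L^2)$, $\cO(a^3/L^3)$ into $\cO(a^2\Theta_a/L^2)$, $\cO(\Theta_b)$, $\cO(a/L)$, each of which is $o(1)$ under the Fraunhofer scalings $\Theta_a\ll L^2/a^2$, $\Theta_b\ll1$ and $a\ll L$; this yields the remainders quoted in \eqref{eq.asymp1} (and, without the division, the $o(a^2/L^2)$ of \eqref{eq.asymp2}). I expect the only genuinely delicate point to be the non-coincident bound: the naive product estimate $1/|\xi_1|\cdot1/|\xi_2|$ breaks down when one offset component vanishes, so the argument must single out the dominant component and keep $|\sinc|\le1$ on the other. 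Everything else is a direct substitution of the asymptotic kernels of \cref{prop.asymKI}.
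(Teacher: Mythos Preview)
Your argument is correct and follows the same skeleton as the paper: specialize \cref{prop.asymKI} to a single scatterer, then control $\widetilde{\vH}_r(\vy,\vy_*;k)$ in the two regimes while noting that $\widetilde{\vH}_s$ has unit-size norm (and equals $\vP_s$ at $\vy=\vy_*$). The difference is in how you handle the decay of $\widetilde{\vH}_r$ when $\vy_\parallel\neq\vy_{*,\parallel}$. You commit to the square array $\cA=[-a/2,a/2]^2$, compute $\cF[\mathds{1}_\cA]$ explicitly as a product of $\sinc$'s, and bound the dominant factor. The paper instead isolates this step as \cref{lem.asymHr} and proves it for an \emph{arbitrary} array shape by writing $\exp[\imath f]$ as a divergence and applying the divergence theorem on the rescaled array $\widetilde{\cA}=a^{-1}\cA$; the constant in the $\cO$ then depends only on the perimeter of $\widetilde{\cA}$. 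Your $\sinc$ argument is shorter and more explicit, and your care about the ``one offset component vanishes'' degeneracy is exactly the right point to flag; the paper's integration-by-parts route sidesteps that degeneracy automatically and buys generality in the array geometry, at the cost of introducing an auxiliary lemma. If you want your proof to match the generality of the statement (which does not assume $\cA$ is square), replace the $\sinc$ bound by that divergence-theorem estimate; otherwise, note the square-array assumption explicitly.
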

Concretely, \cref{prop.crossrang} shows that the image vanishes
asymptotically (compared to its value at $\vy_*$) provided $|\vy_{\parallel}-\vy_{*,\parallel} |$
is large with respect to the Rayleigh number $L/(ak)$. Hence the
characteristic size of the focal spot in the cross-range of a dipole is
given by the Rayleigh resolution $L/(ak)$, as is the case in acoustics
(see e.g. \cite{Blei:2013:MSI,Borcea:2007:OWD}) or in electromagnetics
with collocated sources and receivers \cite{Cassier:2017:IPD}.  The
proof of this proposition is based on the following lemma which
approximates the matrix
$\widetilde{\vH_r}\left(\vy,\vy';k\right)$ in Fraunhofer regime.
\begin{lemma}\label{lem.asymHr}
For $\vy=(\vy_{\parallel},L+\eta)$, $\vy'=(\vy_{\parallel}',L+\eta')$
with  $\vy_{\parallel} \neq \vy'_{\parallel}$, the matrix $\widetilde{\vH}_r(\vy,\vy';k)$
defined by \eqref{eq.defhraproxs} has the asymptotic expansion
\begin{equation}\label{eq.decreascrossrange1}
\|\widetilde{\vH}_r(\vy,\vy';k)\|=  \frac{a^2}{L^2}\, \cO\Big(   \frac{ L}{  a \, k |\vy_{\parallel}-\vy'_{\parallel}
 | } \Big),
\end{equation}
in the Fraunhofer regime and for any matrix norm. The constant in the
$\cO$ notation depends only on the shape of the array $\cA$.
For $\vy'=\vy$, one has
\begin{equation}\label{eq.asymptappoxHr}
\widetilde{\vH}_r(\vy,\vy;k)= \frac{  \operatorname{mes}\cA}{ (4\pi L)^2} \vP_{\parallel},
\end{equation}
where  $\operatorname{mes} \cA=\Theta(a^2)$ is the array area.
\end{lemma}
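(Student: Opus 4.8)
The plan is to work directly from the closed-form expression \eqref{eq.defhraproxs}, which factors $\widetilde{\vH}_r$ as a scalar Fourier transform of the array indicator times the fixed projector $\vP_\parallel$; no stationary phase is needed here, only an elementary decay estimate. The diagonal assertion \eqref{eq.asymptappoxHr} is immediate: setting $\vy'=\vy$ makes the Fourier argument vanish, so $\cF[\mathds{1}_\cA](\vzero)=\int_{\real^2}\mathds{1}_\cA\,d\vx_\parallel=\operatorname{mes}\cA$, while the phase $\exp[\imath k(\eta'-\eta)]$ equals $1$ since $\eta'=\eta$. Substituting into \eqref{eq.defhraproxs} yields $\widetilde{\vH}_r(\vy,\vy;k)=\operatorname{mes}\cA\,\vP_\parallel/(4\pi L)^2$ exactly.

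For the off-diagonal estimate \eqref{eq.decreascrossrange1} the entire content is a decay bound on $\cF[\mathds{1}_\cA]$. Taking norms in \eqref{eq.defhraproxs}, using $|\exp[\imath k(\eta'-\eta)]|=1$ and the fact that $\|\vP_\parallel\|$ is a fixed constant (a rank-two orthogonal projector, whose norm depends only on the chosen matrix norm, which is why the statement holds for any matrix norm), reduces the claim to showing
\[
\left|\cF[\mathds{1}_\cA]\Big(\tfrac{k}{L}(\vy'_\parallel-\vy_\parallel)\Big)\right|=\cO\Big(\frac{aL}{k\,|\vy_\parallel-\vy'_\parallel|}\Big).
\]
Dividing this by $(4\pi L)^2$ produces precisely the stated $\tfrac{a^2}{L^2}\,\cO\big(L/(ak|\vy_\parallel-\vy'_\parallel|)\big)$, so I only need the above bound with $\vxi_\parallel=\tfrac{k}{L}(\vy'_\parallel-\vy_\parallel)$, i.e. $|\vxi_\parallel|=\tfrac{k}{L}|\vy_\parallel-\vy'_\parallel|$.

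The key step, and the only real obstacle, is the $1/|\vxi_\parallel|$ decay of the Fourier transform of the array indicator. I would obtain it by integration by parts: writing $\exp[\imath\vx_\parallel\cdot\vxi_\parallel]=(\imath|\vxi_\parallel|)^{-1}\partial_{\hat\vxi}\exp[\imath\vx_\parallel\cdot\vxi_\parallel]$ with $\hat\vxi=\vxi_\parallel/|\vxi_\parallel|$ and applying the divergence theorem on $\cA$ converts the area integral into a boundary integral,
\[
\cF[\mathds{1}_\cA](\vxi_\parallel)=\frac{1}{\imath|\vxi_\parallel|}\int_{\partial\cA}(\hat\vxi\cdot\vn)\,\exp[\imath\vx_\parallel\cdot\vxi_\parallel]\,ds,
\]
whence $|\cF[\mathds{1}_\cA](\vxi_\parallel)|\le \operatorname{length}(\partial\cA)/|\vxi_\parallel|$. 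Since $\operatorname{length}(\partial\cA)=\cO(a)$ and the only shape dependence enters through this perimeter factor, the displayed bound follows, giving \eqref{eq.decreascrossrange1} with a constant depending only on the shape of $\cA$. As a cross-check, for a square array one may instead evaluate $\cF[\mathds{1}_\cA]$ explicitly as a product of $\operatorname{sinc}$ factors; bounding the slowest-decaying factor by $1/|t|$ along the axis directions reproduces the same $1/|\vxi_\parallel|$ rate and confirms that the constant is purely geometric.
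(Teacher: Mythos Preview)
Your proof is correct and follows essentially the same route as the paper's: both establish the diagonal identity by evaluating $\cF[\mathds{1}_\cA](\vzero)=\operatorname{mes}\cA$, and both obtain the off-diagonal decay by applying the divergence theorem to convert the oscillating area integral into a boundary integral, yielding the $1/|\vxi_\parallel|$ factor. The only cosmetic difference is that the paper first rescales the array to $\widetilde{\cA}=a^{-1}\cA$ before integrating by parts, whereas you work directly on $\cA$ and absorb the scaling into the perimeter bound $\operatorname{length}(\partial\cA)=\cO(a)$; the arguments are otherwise identical.
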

\Cref{lem.asymHr} is proved in \cref{sec:prooflem1}.

\begin{proof}[Proof of \cref{prop.crossrang}]
The formulas \eqref{eq.asymp1} and \eqref{eq.asymp2} are a
straightforward consequence of \cref{prop.asymKI}, \cref{lem.asymHr} and
the fact that $\widetilde{\vH}_s(\vy_*,\vy_*;k)=\vP_s$.
\end{proof}

\begin{remark}
In the particular case where the receiver array $\cA$ is a square of
side $a$, one has $\operatorname{mes}\cA=a^2$ and by evaluating the
Fourier transform of the indicator function $\mathds{1}_{\cA}$, one
finds the following explicit expression of the matrix
$\widetilde{\vH}_r(\vy,\vy';k)$:
\begin{equation}\label{eq.sinccrossrange}
\widetilde{\vH}_r(\vy,\vy';k)= a^2 \frac{\exp[\imath k(\eta'-\eta)]}{(4\pi
L)^2} \operatorname{sinc}\Big(\frac{k\, a (y_1 -y'_1)}{2 \, L}\Big)
\operatorname{sinc}\Big(\frac{k \,a (y_2 - y'_2)}{2  L}\Big) \vP_{\parallel},
 \end{equation}
with $\vy_{\parallel}=(y_1,y_2)$ and $\vy'_{\parallel}=(y'_1,y'_2)$. 
\end{remark}
We now consider the problem of recovering the polarizability tensors
$\bGa_i$ of the $N$ dipoles. As in \cite{Cassier:2017:IPD}, the idea is
to observe that at each dipole position $\vy_i$ we have
\begin{equation}\label{eq.systmpola}
 \vH_r\left(\vy_i,\vy_i;k\right) \, \bGa_i \,
 \vH_s\left(\vy_i,\vy_i;k\right)=\cI_{KM}(\vy_i;k)-\sum_{j\neq i}  \vH_r\left(\vy_i,\vy_j;k\right) \, \bGa_j \,  \vH_s\left(\vy_i,\vy_j;k\right)^{\top}.
\end{equation}
If the dipoles are well-separated, \cref{lem.asymHr} guarantees that the
terms in \eqref{eq.systmpola} involving the dipoles $j\neq i$ remain small.
Thus, we expect a good estimate of the polarizability tensor $\bGa_i$ by
solving the linear system
\begin{equation}\label{eq.systpolaasymp}
\widetilde{\vH}_r\left(\vy,\vy;k\right)  \, \bGa \,
\widetilde{\vH}_s\left(\vy,\vy;k\right)=\cI_{KM}(\vy;k),
\end{equation}
for each point $\vy$ of the imaging window. Notice that
we replaced the matrices $\vH_r$ and $\vH_s$ by their Fraunhofer regime
approximations  $\widetilde{\vH}_r\left(\vy,\vy;k\right)$ and
$\widetilde{\vH}_s\left(\vy,\vy;k\right)$. Up to scaling factors, these latter matrices are
close to the rank two projectors
$\vP_{\parallel}$ and $\vP_s$, respectively. Therefore we cannot  expect
to retrieve the full polarizability tensor $\bGa$ in the Fraunhofer
regime. At most, we expect to recover only the tensor $\bGa$ projected
on the range of $\vP_{\parallel}$ (on the left) and the range of $\vP_s$
(on the right), that is the $2\times2$ matrix $\widetilde{\bGa}
=\vU_{\parallel}^{*} \, \bGa  \,  \vU_{s}$.  Indeed,  using
\eqref{eq.Hsasymp} for $\vy'=\vy$, \eqref{eq.asymptappoxHr} and
\eqref{eq.projectdef}, we rewrite \eqref{eq.systpolaasymp} as:
\begin{equation}
\frac{\operatorname{mes}\cA }{(4\pi L)^2} \,  \vU_{\parallel}\,
\vU_{\parallel}^{*} \, \bGa  \, \frac{1}{(4\pi L)^2}  \vU_{s}\,
\vU_{s}^{*}= \cI_{KM}(\vy;k).
\end{equation}
Multiplying on the left by $(L^2/ \operatorname{mes}\cA ) \,
\vU_{\parallel}^{*}$ and on the right by $(4\pi L)^2 \vU_{s}$ gives
\begin{equation}\label{eq.polareconst}
\widetilde{\bGa} = \frac{(4\pi L)^4}{ \operatorname{mes}\cA} \,
\widetilde{\cI}_{KM}(\vy;k),
\end{equation}
where $\widetilde{\cI}_{KM}(\vy;k)$ is also defined by
$\widetilde{\cI}_{KM}(\vy;k) =\vU_{\parallel}^{*} \,  \cI_{KM}(\vy;k) \,  \vU_{s}$.
In the following theorem, the Frobenius matrix norm is denoted by
$\|\cdot\|_F$.

\begin{theorem}(Cross-range estimation of the polarizability tensor)\label{prop.polacrossactiv}
Let  $\widetilde{\bGa}$ and $\widetilde{\bGa}_i$  be the $2\times 2$
matrices defined by $\widetilde{\bGa} =\vU_{\parallel}^{*} \, \bGa  \,
\vU_{s}$  and  $\bGa_i =\vU_{\parallel}^{*} \,   \, \bGa_i  \vU_{s}$.
Then we have the following
\begin{itemize}
\item If the imaging point coincides with the dipole, i.e. $\vy=\vy_i$,
\begin{equation}\label{eq.crossrangeplarecover}
\|\widetilde{\bGa}- \widetilde{\bGa}_i\|_F= \cO \Big(  \frac{L} {a\, k \min \limits_{j\neq
i}|\vy_{i,\parallel}-\vy_{j,\parallel}|} \Big)+  \cO\left(\frac{ a}{L}\right)+ \cO\left(\frac{a^2\Theta_a}{L^2}\right)+ \cO\left(\Theta_b \right).
\end{equation}
\item If the imaging point does not coincide with a dipole in cross-range,
i.e. if $\vy\neq \vy_j$ for all $j=1,\ldots,N$,
\begin{equation}\label{eq.drecraspola}
\| \widetilde{\bGa}\|_F = \cO \Big(  \frac{L} {a\, k \min
\limits_{j=1,\ldots,N}|\vy_{\parallel}-\vy_{j,\parallel}|} \Big)+ \cO\left(\frac{a^2\Theta_a}{L^2}\right)+ \cO\left(\Theta_b \right).
\end{equation}
\end{itemize}
\end{theorem}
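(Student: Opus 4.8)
The plan is to feed the Fraunhofer expansion of \cref{prop.asymKI} into the reconstruction formula \eqref{eq.polareconst}, so that $\widetilde{\bGa} = \tfrac{(4\pi L)^4}{\operatorname{mes}\cA}\,\vU_\parallel^*\,\cI_{KM}(\vy;k)\,\vU_s$, and then to separate a single ``signal'' term from a residual controlled by \cref{lem.asymHr}. The whole computation rests on two structural observations. First, \eqref{eq.defhraproxs} shows that $\widetilde{\vH}_r(\vy,\vy_n;k)$ is a \emph{scalar} multiple of $\vP_\parallel$, namely $s_n(\vy)\vP_\parallel$ with $s_n(\vy) = (4\pi L)^{-2}e^{\imath k(\eta_n-\eta)}\cF[\mathds{1}_\cA]\big(\tfrac{k}{L}(\vy_{n,\parallel}-\vy_\parallel)\big)$; second, \eqref{eq.Hsasymp} shows $\widetilde{\vH}_s(\vy,\vy_n;k)^\top = e^{\imath k(|\vx_s-\vy_n|-|\vx_s-\vy|)}\vP_s$ since $\vP_s=\vP_s^\top$. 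Using the orthonormality $\vU_\parallel^*\vU_\parallel=\vU_s^*\vU_s=\vI$ from \eqref{eq.projectdef} one has $\vU_\parallel^*\vP_\parallel=\vU_\parallel^*$ and $\vP_s\vU_s=\vU_s$, so each summand collapses to a scalar times the true projected tensor:
\[
  \vU_\parallel^*\,\widetilde{\vH}_r(\vy,\vy_n;k)\,\bGa_n\,\widetilde{\vH}_s(\vy,\vy_n;k)^\top\,\vU_s
  = s_n(\vy)\,e^{\imath k(|\vx_s-\vy_n|-|\vx_s-\vy|)}\,\widetilde{\bGa}_n .
\]

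Next I would treat the coinciding case $\vy=\vy_i$. For $n=i$ the scalar is $s_i(\vy_i) = (4\pi L)^{-2}\cF[\mathds{1}_\cA](0) = \operatorname{mes}\cA/(4\pi L)^2$ and the phase is $1$, so the $n=i$ summand is exactly $\tfrac{\operatorname{mes}\cA}{(4\pi L)^2}\widetilde{\bGa}_i$; after multiplying by $\tfrac{(4\pi L)^4}{\operatorname{mes}\cA}\cdot\tfrac{1}{(4\pi L)^2}=\tfrac{(4\pi L)^2}{\operatorname{mes}\cA}$ this reproduces $\widetilde{\bGa}_i$ exactly. For $n\neq i$ I would bound $|s_n(\vy_i)|$ by the off-diagonal estimate \eqref{eq.decreascrossrange1} of \cref{lem.asymHr}, use $\|\widetilde{\bGa}_n\|_F=\cO(1)$, and observe that summing finitely many such terms is dominated by the nearest dipole, giving $\cO\big(L/(a\,k\min_{j\neq i}|\vy_{i,\parallel}-\vy_{j,\parallel}|)\big)$ after the amplification. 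The error term $o(1)$ of \cref{prop.asymKI}, once amplified by $\Theta(L^2/a^2)=(4\pi L)^2/\operatorname{mes}\cA$ (and compressed by $\vU_\parallel^*(\cdot)\vU_s$, which does not raise the order), contributes exactly $\cO(a^2\Theta_a/L^2)+\cO(\Theta_b)+\cO(a/L)$, yielding \eqref{eq.crossrangeplarecover}.

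The non-coinciding case $\vy\neq\vy_j$ is identical except that no ``signal'' summand survives: every $n$ now satisfies $\vy_\parallel\neq\vy_{n,\parallel}$, so $|s_n(\vy)|$ is controlled by \eqref{eq.decreascrossrange1}, and the finite sum is again dominated by the nearest dipole. After the amplification this gives the leading $\cO\big(L/(a\,k\min_j|\vy_\parallel-\vy_{j,\parallel}|)\big)$ together with the amplified remainder of \cref{prop.asymKI}, establishing \eqref{eq.drecraspola}. Throughout I would use submultiplicativity of matrix norms, the operator bound $\|\vU_\parallel^*\|=\|\vU_s\|=1$, and the equivalence of the Frobenius and operator norms on $2\times2$ matrices to pass freely between the norm in \cref{lem.asymHr} and $\|\cdot\|_F$.

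The main obstacle is the bookkeeping of the reconstruction factor. Because \eqref{eq.polareconst} divides by $\operatorname{mes}\cA=\Theta(a^2)$, it \emph{amplifies} the residual of \cref{prop.asymKI} by $\Theta(L^2/a^2)$, a large quantity in the Fraunhofer regime. The crux is therefore to verify that each amplified remainder stays $o(1)$: explicitly $\Theta(L^2/a^2)\cdot\cO(a^4\Theta_a/L^4)=\cO(a^2\Theta_a/L^2)$, $\Theta(L^2/a^2)\cdot\cO(a^2\Theta_b/L^2)=\cO(\Theta_b)$, and $\Theta(L^2/a^2)\cdot\cO(a^3/L^3)=\cO(a/L)$, each of which is $o(1)$ thanks to $\Theta_a\ll L^2/a^2$, $\Theta_b\ll1$ and $a\ll L$. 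Keeping track of which powers of $a/L$ survive this amplification---rather than the projector algebra, which is immediate---is the only genuinely delicate point.
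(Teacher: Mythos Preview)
Your argument is correct and follows essentially the same route as the paper: both plug the Fraunhofer expansion into the reconstruction formula \eqref{eq.polareconst}, isolate the diagonal term, bound the off-diagonal couplings via \cref{lem.asymHr} (the paper invokes \eqref{eq.asymp1}, which is itself derived from \cref{lem.asymHr}), and track the amplification of the $o(1)$ remainder by $\Theta(L^2/a^2)$. Your explicit use of the scalar-times-projector structure of $\widetilde{\vH}_r$ and $\widetilde{\vH}_s$ is slightly more transparent than the paper's presentation, which instead starts from the exact identity \eqref{eq.systmpola} and then substitutes the asymptotics \eqref{eq.spreadfunctionarray}--\eqref{eq.asympspreadfunctionsource}, but the content is the same.
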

\begin{proof}
We first prove the asymptotic relation \eqref{eq.crossrangeplarecover} when $\vy=\vy_i$. One checks easily by first left and right multiplying  both sides of  \eqref{eq.systmpola} respectively by $((4 \pi L)^2/ \operatorname{mes}\cA ) \, \vU_{\parallel}^{*}$ and  $(4\pi L)^2 \vU_{s}$ and then using the asymptotic formula \eqref{eq.spreadfunctionarray}  and \eqref{eq.asympspreadfunctionsource}  that
\begin{equation*}
\widetilde{\bGa}_i= \frac{ (4\pi L)^4}{ \operatorname{mes}\cA} \Big[ \widetilde{\cI}(\vy_i;k) -\sum_{j\neq i} \vU_{\parallel}^{*} \vH_r\left(\vy_i,\vy_j;k\right) \, \bGa_j \,  \vH_s\left(\vy_i,\vy_j;k\right)^{\top} \vU_{s}+ \frac{1}{ L^2 }o\Big(\frac{a^2}{L^2}\Big) \Big],
\end{equation*}
where $o(a^2/L^2)$ is explicitly given by
$\cO(a^4\Theta_a/L^4)+\cO(a^2\Theta_b/L^2)+\cO(a^3/L^3)$.
Thus it  follows immediately from \eqref{eq.polareconst} that
\begin{equation}\label{eq.singlefreqestimate}
 \widetilde{\bGa}-\widetilde{\bGa}_i=\frac{  (4\pi L)^4}{
 \operatorname{mes}\cA} \Big[ \sum_{j\neq i} \vU_{\parallel}^{*}
 \vH_r\left(\vy_i,\vy_j;k\right) \, \bGa_j\,
 \vH_s\left(\vy_i,\vy_j;k\right)^{\top} \vU_{s}+ \frac{1}{ L^2
 }o\Big(\frac{a^2}{L^2}\Big) \Big],
\end{equation}
where
$o(a^2/L^2)=\cO(a^4\Theta_a/L^4)+\cO(a^2\Theta_b/L^2)+\cO(a^3/L^3)$.  \\
\noindent Finally we conclude by using that
$\operatorname{mes}\cA = \Theta(a^2)$,
 $\|\vU_{\parallel}^{*}\|_F=\|\vU_s\|_F=\sqrt{2}$, and the
asymptotic expansion \eqref{eq.asymp1} to control the coupling terms
\[
\|\sum_{j\neq i}\vH_r\left(\vy_i,\vy_j;k\right) \, \bGa_j \,
\vH_s\left(\vy_i,\vy_j;k\right)^{\top}\|_F.\]
 
The asymptotic expansion \eqref{eq.drecraspola} where $\vy\neq \vy_i$ is
an immediate consequence of the reconstruction formula
\eqref{eq.polareconst}, $\operatorname{mes}\cA = \Theta(a^2)$,
\cref{prop.crossrang} which expresses the decay of the Kirchhoff image
$\| \cI_{KM}(\vy;k) \|_F$ in the case of one dipole and the linearity of
$\cI_{KM}(\vy;k)$ with respect to the data.
\end{proof}

\Cref{prop.polacrossactiv} and estimate \eqref{eq.crossrangeplarecover}
guarantee a good estimate of $\widetilde{\bGa}_i$ using the
reconstruction formula \eqref{eq.polareconst} at the dipole position
$\vy_i$, provided the cross-range distance to the closest dipole is
large compared to the Rayleigh criterion $L/(k a)$. The other error
terms vanish in the Fraunhofer asymptotic regime. The second estimate
\eqref{eq.drecraspola}  shows that $\|\widetilde{\bGa}\|_F$ at an
imaging point $\vy$ decays as the inverse of the cross-range distance to
the closest dipole. Thus, $\|\widetilde{\bGa}\|_F$ is also a good
imaging function for the cross-range position of the dipoles.  Indeed,
it has the same cross-range resolution $L/(ak)$  (see
\eqref{eq.drecraspola}) as the Kirchhoff imaging function
$\cI_{KM}(\vy;k)$ (see \cref{prop.crossrang}).

\subsection{Estimates of range position and polarizability tensor}\label{sec.rangeesti}
We now assume that the data \eqref{eq::data1} is available over a
frequency band $[\omega_0-B/2, \omega_0+B/2]$ of central (angular)
frequency $\omega_0$ and bandwidth $B$. The Kirchhoff image of the
multi-frequency data is given by integrating over the frequency band the
single frequency Kirchhoff imaging function
\eqref{eq.defimagingfuction}, that is
\begin{equation}\label{eq.Kirchmf}
\cI_{KM}(\vy)=\int_{|\omega-\omega_0|<B/2} d \omega \, \cI_{KM}\left(\vy;\frac{\omega}{c}\right).
\end{equation}
We further assume that the dipoles are standard scatterers, i.e. their
rescaled polarizability tensor $\bGa(\vy,\omega)=\mu \omega^2
\bGa'(\vy,\omega)$ is constant with respect to the frequency on the
frequency bandwidth $B$. This a usually a good approximation for
dielectrics or metals at high frequency (see e.g.
\cite{Novotny:2012:PNO}) where the ``physical'' polarizability tensor is
shown to be $\bGa'(\vy,\omega) = \cO(\omega^{-2})$ as $\omega\to \infty$.

We now estimate the range resolution of the imaging function
\eqref{eq.Kirchmf}. Without loss of generality, we consider the case of
a single dipole $\vy_*=(\vy_{*,\parallel},L+\eta_*)$ whose cross-range
position $\vy_{*,\parallel}$ is known. For the analysis and in the
following, we assume that all the asymptotic expansions of \cref{sec.asympparam} 
hold uniformly with $k=\omega/c$ in the frequency band.  

Let $\vy=(\vy_{*,\parallel},L+\eta)$ be an imaging point. With
\cref{prop.asymKI} we see that the imaging function \eqref{eq.Kirchmf} satisfies
\begin{equation*}
\cI_{KM}(\vy) = \frac{1}{(4\pi L)^2}\int_{|\omega-\omega_0|<B/2} d
\omega \widetilde{\vH}_r\Big(\vy,\vy_*;\frac{\omega}{c}\Big) \, \bGa_{*}
\widetilde{\vH}_s\Big(\vy,\vy_*;\frac{\omega}{c}\Big)^{\top} + \frac{B}{(4\pi L)^2} \,
o\Big(\frac{a^2}{L^2}\Big),
\end{equation*}
where the $o(a^2/L^2)$ term can be explicitly given  by
$\cO\left(a^4\Theta_a /L^4\right)+\cO( a^2\Theta_b /L^2)+ \cO\left(a^3
/L^3\right)$ (see \cref{prop.asymKI}). Hence we get that
\begin{equation}\label{eq.dipolealigninrange}
\cI_{KM}(\vy) = \frac{  \operatorname{mes}\cA } {(4\pi L)^4} \,
\vP_{\parallel}  \bGa_* \, \vP_s  \int_{|\omega-\omega_0|<B/2} d \omega
\, \exp[\imath \omega\, \big(\phi(\eta_*)-\phi(\eta)\big)/c] +   \frac{B}{(4\pi L)^2} \, o\Big(\frac{a^2}{L^2}\Big),
\end{equation}
where $\phi$ is the smooth phase function 
\[
\phi(\eta)= \eta+|\vx_s-\vy|
=\eta+\sqrt{|\vy_{\parallel}-\vx_{s,\parallel}|^2+(L+\eta-x_{s,3})^2}.
\]
The integral over the frequency band can then be evaluated explicitly to
obtain
\begin{equation}\label{eq.asymmfrange}
\begin{aligned}
\cI_{KM}(\vy) &=\frac{ B \operatorname{mes}\cA } {(4\pi L)^4}
\vP_{\parallel}  \bGa_* \, \vP_s \exp[\imath \omega_0
\big(\phi(\eta_*)-\phi(\eta)\big)/c ] \, \operatorname{sinc}\Big(\frac{B
\big(\phi(\eta_*)-\phi(\eta)\big)}{2 \, c}\Big)\\
&+ \frac{B}{(4\pi L)^2} o\left(\frac{a^2}{L^2}\right).
\end{aligned}
\end{equation}

The next proposition shows that as soon as the range distance
$|\eta - \eta_*|$  between the imaging point and the dipole becomes
large compared to $c/B$, the norm of the Kirchhoff imaging function
\eqref{eq.asympbf1} becomes small compared to its value at the dipole
position \eqref{eq.asympbf2}. Thus the range resolution is given by
$c/B$ (as in acoustics \cite{Blei:2013:MSI,Borcea:2007:OWD}  or in
electromagnetics with collocated sources and receivers
\cite{Cassier:2017:IPD}).
\begin{proposition}[Range resolution of the position]
\label{prop.rangeresolution}
Assume the geometric condition \eqref{eq:geoimg} holds for the source.
The Kirchhoff imaging function \eqref{eq.Kirchmf} of a dipole located at
$\vy_*=(\vy_{*,\parallel}, L+\eta_*)$ and evaluated at
$\vy=(\vy_{*,\parallel},
L+\eta)$ satisfies 
\begin{equation}\label{eq.asympbf1}
 \|\cI_{KM}(\vy;k)\| =  \frac{B}{(4\pi L)^2}  \frac{ a^2}{L^2} \left(  \cO\Big(    \frac{ c}{
 B   \left|\eta-\eta_{*} \right|} \Big)+o(1)\right),~\text{when}~\eta
 \neq \eta_*,
\end{equation}
for any matrix norm. The remainder $o(1)$ term can be explicitly given
by $\cO\left(a^2\Theta_a /L^2\right)+\cO( \Theta_b)+
\cO\left(b/L\right)$.
When $\eta = \eta_*$, one has 
\begin{equation}\label{eq.asympbf2}
\cI_{KM}(\vy_*;k)=  \ \frac{B}{(4\pi L)^2} \Big[\frac{\operatorname{mes}\cA }{(4\pi L)^2} \vP_{\parallel} \bGa_* \, \vP_s+ \, o\Big(\frac{a^2}{L^2}\Big)\Big],
\end{equation}
where  $\operatorname{mes}  \cA = \Theta(a^2)$ is the area of the array.
\end{proposition}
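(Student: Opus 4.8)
The plan is to start from the explicit multi-frequency expression \eqref{eq.asymmfrange}, which already isolates the entire range dependence into a single scalar $\operatorname{sinc}$ factor, and thereby reduce the proposition to a quantitative statement about the smooth phase $\phi(\eta) = \eta + |\vx_s - \vy|$. Everything hinges on showing that $\phi$ is strictly monotone in $\eta$ across the imaging window, with derivative bounded away from zero uniformly in the Fraunhofer parameters. First I would dispose of the on-axis case $\eta = \eta_*$: here $\phi(\eta_*) - \phi(\eta) = 0$, so the exponential and the $\operatorname{sinc}$ in \eqref{eq.asymmfrange} both equal one, and the main term collapses to $\frac{B\operatorname{mes}\cA}{(4\pi L)^4}\vP_{\parallel}\bGa_*\vP_s$, which is precisely the leading term of \eqref{eq.asympbf2}; the advertised $o(a^2/L^2)$ remainder is just the Fraunhofer remainder inherited from \cref{prop.asymKI}.

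For the off-axis case $\eta \neq \eta_*$ I would use the elementary bound $|\operatorname{sinc}(x)| \leq 1/|x|$, so that the $\operatorname{sinc}$ factor in \eqref{eq.asymmfrange} is $\cO\left(\frac{c}{B|\phi(\eta_*) - \phi(\eta)|}\right)$. To turn this into the stated $\cO\left(\frac{c}{B|\eta - \eta_*|}\right)$ decay, I would apply the mean value theorem, writing $|\phi(\eta_*) - \phi(\eta)| = |\phi'(\tilde\eta)|\,|\eta_* - \eta|$ for some intermediate $\tilde\eta$, and then establish a uniform lower bound $\phi'(\tilde\eta) \geq c_0 > 0$. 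Pulling the common prefactor $\frac{B}{(4\pi L)^2}$ and the factor $\operatorname{mes}\cA = \Theta(a^2)$ out of \eqref{eq.asymmfrange}, and using $\|\vP_{\parallel}\bGa_*\vP_s\| = \cO(1)$, then matches \eqref{eq.asympbf1} term by term, the surviving Fraunhofer remainder supplying the claimed $o(1)$.

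The hard part will be the uniform lower bound on $\phi'$, and this is exactly where the geometric hypothesis \eqref{eq:geoimg} must enter. Computing $\phi'(\eta) = 1 + (L + \eta - x_{s,3})/|\vx_s - \vy|$, I would note that the second summand is the cosine of the angle $\beta$ between $\vy - \vx_s$ and the $+x_3$ axis, so $\phi'(\eta) = 1 + \cos\beta$, and it therefore suffices to bound $\sin\beta = |\vy_{*,\parallel} - \vx_{s,\parallel}|/|\vx_s - \vy|$ away from zero (geometrically, the only degeneration $\phi' = 0$ occurs when $\vy_{*,\parallel} = \vx_{s,\parallel}$ and the source sits directly behind $\vy$ along the array normal). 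Since $|\vy_{*,\parallel}| = \cO(b) \ll a$, the cross-range point $(\vy_{*,\parallel}, 0)$ lies in the array $\cA$, and evaluating the exclusion condition \eqref{eq:geoimg} at this array point gives $|\vy_{*,\parallel} - \vx_{s,\parallel}| > c\gamma\,|\vx_r - \vx_s|$.

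It remains to compare the two relevant distances. Using \eqref{eq.dsitsource} together with the normalization $|\vx_s - \vy_0| = L$, both $|\vx_r - \vx_s|$ and $|\vx_s - \vy|$ are $\Theta(L)$, so their ratio is $\Theta(1)$ and $\sin\beta \geq c_0' > 0$; consequently $\cos\beta \leq \sqrt{1 - (c_0')^2} < 1$ is bounded away from $-1$ and $\phi'(\eta) \geq 1 - \sqrt{1 - (c_0')^2} > 0$, which is the desired lower bound $c_0$. I expect the only delicate bookkeeping to be confirming that these distance ratios remain $\Theta(1)$ uniformly over the window and over the frequency band $[\omega_0 - B/2, \omega_0 + B/2]$, which follows directly from the Fraunhofer scalings fixed in \cref{sec.asympparam}.
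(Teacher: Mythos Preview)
Your approach is essentially the paper's: both start from \eqref{eq.asymmfrange}, dispose of $\eta=\eta_*$ by evaluating the $\operatorname{sinc}$ at zero, and for $\eta\neq\eta_*$ combine $|\operatorname{sinc}(x)|\le 1/|x|$ with the mean value theorem applied to $\phi$. The only substantive difference lies in how the lower bound on $\phi'$ is obtained. The paper argues qualitatively: $\phi'(\eta')=1+(L+\eta'-x_{s,3})/|\vx_s-\vy'|$ can vanish only when $\vy'_\parallel=\vx_{s,\parallel}$, which the geometric condition rules out (since $b\ll a$ puts $(\vy'_\parallel,0)\in\cA$, and the axis of the corresponding cone in $\cR(\gamma)$ is exactly the set $\{\vx:\vx_\parallel=\vy'_\parallel\}$); then compactness of $\cW$ gives $\max_{\vy'\in\cW}|\phi'(\eta')|^{-1}<\infty$ immediately. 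You instead chase an explicit constant by bounding $\sin\beta$ from below.

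Your route is sound in spirit, but the step ``both $|\vx_r-\vx_s|$ and $|\vx_s-\vy|$ are $\Theta(L)$'' is not guaranteed by the hypotheses as stated: the geometric condition \eqref{eq:geoimg} and the normalization $|\vx_s-\vy_0|=L$ do not by themselves force $|(\vy_{*,\parallel},0)-\vx_s|=\Theta(L)$ (think of $\vx_s$ sitting just outside the array in the plane $x_3=0$). In that scenario your $\sin\beta$ bound degenerates, yet $\phi'$ is actually near $2$ because $\cos\beta>0$. So the claim you flagged as ``delicate bookkeeping'' genuinely needs a case split (e.g.\ $x_{s,3}\le L+\eta$ gives $\cos\beta\ge 0$ directly, and only the case $x_{s,3}>L+\eta$ requires the cone argument), or you can simply adopt the paper's compactness shortcut, which sidesteps the issue entirely.
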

\begin{proof}
Since $\phi$ is a smooth function, the mean value theorem implies that
\begin{equation}\label{eq.taylorphi}
|(\phi(\eta)-\phi(\eta_*))^{-1}|\leq \max_{\vy'\in
\cW}|\phi'(\eta')^{-1}|\, |\eta-\eta_*|^{-1}=\cO(|\eta-\eta_*|^{-1}),
\end{equation}
where $\vy'=(\vy_{\parallel}',L+\eta')$ is a point belonging to the
imaging window $\cW$. In \eqref{eq.taylorphi} we can use
$\phi'(\eta')^{-1}$ because
\begin{equation}\label{eq.derivphase}
\phi'(\eta')=1+\frac{L+\eta'-x_{s,3}}{|\vx_s-\vy'|}\neq 0
\end{equation}
since $\vy_{\parallel}' \neq \vx_{s,\parallel}$. This last condition on
the cross-range of the source is weaker than the geometric condition
imposed in \eqref{eq:geoimg} since the characteristic size $b$ of the
imaging widow is small compared to characteristic size $a$ of the array
of receivers. As $\cW$ is a compact set, this implies that
$\max_{\vy'\in \cW}|(\phi'(\eta'))^{-1}|$ exists in
\eqref{eq.taylorphi}.  Thus, it follows form \eqref{eq.taylorphi} that
\begin{equation}
\operatorname{sinc}\Big(\frac{B \big(\phi(\eta_*)-\phi(\eta)\big)}{2 \, c}\Big) = \cO\Big(    \frac{ c}{B   \left|\eta-\eta_{*} \right|} \Big).
\label{eq:sincphi}
\end{equation}
Thus, the asymptotic expansion\eqref{eq.asympbf1} is an immediate
consequence of \eqref{eq.asymmfrange},  \eqref{eq:sincphi} and the fact
that $\operatorname{mes}\cA = \cO(a^2)$.  Finally, the asymptotic
relation \eqref{eq.asympbf2} follows immediately by evaluating
\eqref{eq.dipolealigninrange} at $\vy=\vy_*$.
\end{proof}

Since we assume the polarizability tensor $\bGa$ is frequency
independent (see \cref{sec:problem:setup}), we suggest to estimate it by
averaging the single frequency estimate \eqref{eq.crossrangeplarecover}
over the frequency band
\begin{equation}\label{eq.estimatemf}
\widetilde{\bGa} = \frac{1}{B}\int_{\cB} d\omega
\, \widetilde{\bGa}(\omega).
\end{equation}
Note that \eqref{eq.estimatemf} involves the projected polarizability
tensor $\widetilde{\bGa} = \vU_{\parallel}^* \bGa \vU_s$ (which is
estimated in \eqref{eq.polareconst} from the Kirchhoff image), and 
the matrices $\vU_{\parallel}^{*}$ and $\vU_s$ are frequency
independent. Hence it is straightforward to check that integrating the
single frequency estimate \eqref{eq.crossrangeplarecover} and \eqref{eq.drecraspola} with
\eqref{eq.estimatemf} over the frequency band does not change these
cross-range resolution estimates.

Here we study the effect of the other dipoles in the polarizability
tensor estimate in range. We isolate the effect of range by considering
the case where all the dipoles have same cross-range, i.e.
$\vy_i=(\vy_{*,\parallel},L+\eta_i)$ for $i=1, \ldots, N$. The following
proposition shows that the depth resolution of the reconstructed
$\widetilde{\bGa}$ is also $c/B$. Concretely, the estimate
\eqref{eq.rangepolaretensorcover1} shows that one has a good depth
resolution of $\widetilde{\bGa}_i$ by the reconstruction formula
\eqref{eq.estimatemf} at the dipole position $\vy_i$, as soon as the
range distance of the different dipoles is large compared to $c/B$. The
second estimate \eqref{eq.drecraspolatensprrange2}  shows that
$\|\widetilde{\bGa}\|_F$ at an imaging point $\vy$ decays when the range
distance to the closest dipole becomes large compared to $c/B$. Thus,
$\|\widetilde{\bGa}\|_F$ is also a good imaging function for  the range
position of a dipole  with  the same depth resolution $c/B$ (see
\eqref{eq.drecraspolatensprrange2}) as the Kirchhoff imaging function (see
\cref{prop.rangeresolution}).
\begin{theorem}\label{prop.polarange.mat}
Assume the geometric condition \eqref{eq:geoimg} holds for the source
and the dipoles are all aligned  in the range direction of $\cA$. The
image of the cross-range polarizability tensor $\widetilde{\bGa}$ (given
by \eqref{eq.estimatemf}) satisfies the two following estimates:
\begin{itemize}
\item If the imaging point is the dipole location, i.e. $\vy=\vy_i$,
we have
\begin{equation}\label{eq.rangepolaretensorcover1}
\|\widetilde{\bGa}- \widetilde{\bGa}_i\|_F= \cO \Big(  \frac{c} {B \min \limits_{j\neq i}|\eta_i-\eta_j|}\Big)+o(1),
\end{equation}
\item If the imaging point range is different from any of the dipole
ranges, $\vy=(\vy_{*,\parallel},L+\eta)\neq \vy_j$, for all $j=1,\ldots,N$,
\begin{equation}\label{eq.drecraspolatensprrange2}
\|\widetilde{\bGa}\|_F =   \cO \Big(  \frac{c} {B \min \limits_{j=1,\ldots,N}|\eta-\eta_j|} \Big)+o(1).
\end{equation}
\end{itemize}
\end{theorem}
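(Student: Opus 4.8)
The plan is to build the multifrequency estimate of $\widetilde{\bGa}$ from the single-frequency analysis already carried out, then to extract the range dependence by exploiting the $\sinc$ decay in frequency, in exact parallel to how the cross-range \Cref{prop.polacrossactiv} was obtained from \Cref{prop.crossrang}. The key observation is that the frequency averaging in \eqref{eq.estimatemf} is linear and does not touch the frequency-independent projectors $\vU_\parallel$, $\vU_s$, so the reconstruction formula \eqref{eq.polareconst} can be integrated term by term over the band $\cB$.

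First I would write, for each frequency $\omega$ in the band and at $\vy = \vy_i$, the single-frequency identity \eqref{eq.singlefreqestimate}, which expresses $\widetilde{\bGa}(\omega) - \widetilde{\bGa}_i$ as a sum of coupling terms involving $\vH_r(\vy_i,\vy_j;\omega/c)$ and $\vH_s(\vy_i,\vy_j;\omega/c)$ for $j \neq i$, plus a Fraunhofer remainder. Integrating this over $\cB$ and dividing by $B$ yields a bound on $\|\widetilde{\bGa} - \widetilde{\bGa}_i\|_F$ in terms of the frequency averages of the coupling terms. Because all dipoles share the cross-range $\vy_{*,\parallel}$, the cross-range Rayleigh factor $L/(ak|\vy_{i,\parallel}-\vy_{j,\parallel}|)$ from \eqref{eq.asymp1} degenerates (the cross-range separations are zero), so the decay must come entirely from the range. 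This is where the multifrequency structure enters: each coupling term carries a phase $\exp[\imath\omega(\phi(\eta_j)-\phi(\eta_i))/c]$ with $\phi$ the smooth phase of \eqref{eq.asymmfrange}, and integrating over $\cB$ produces a $\sinc(B(\phi(\eta_i)-\phi(\eta_j))/(2c))$ factor. Using the mean value theorem bound \eqref{eq.taylorphi}--\eqref{eq.derivphase} on $\phi'$, exactly as in the proof of \Cref{prop.rangeresolution}, this $\sinc$ is $\cO(c/(B|\eta_i-\eta_j|))$, giving \eqref{eq.rangepolaretensorcover1} after using $\operatorname{mes}\cA = \Theta(a^2)$, $\|\vU_\parallel^*\|_F = \|\vU_s\|_F = \sqrt{2}$, and that the Fraunhofer remainder integrates to an $o(1)$ term.

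For the second estimate \eqref{eq.drecraspolatensprrange2}, where the imaging point range $\eta$ differs from every dipole range, I would instead start from the multifrequency image \eqref{eq.asymmfrange} applied to each dipole and invoke the linearity of $\cI_{KM}$ in the data: $\widetilde{\bGa}$ at $\vy$ is, via \eqref{eq.polareconst} and \eqref{eq.estimatemf}, a normalized superposition of contributions each carrying a factor $\sinc(B(\phi(\eta_j)-\phi(\eta))/(2c))$. The same $\phi'$ bound controls each such factor by $\cO(c/(B|\eta-\eta_j|))$, and taking the worst (closest) dipole gives the $\min_j$ in \eqref{eq.drecraspolatensprrange2}, with the Fraunhofer errors again collapsing to $o(1)$.

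The main obstacle I anticipate is verifying that the Fraunhofer remainders behave uniformly across the band and that the $\sinc$ bound is legitimate at the endpoints of the argument. This requires the standing assumption (stated just above \Cref{prop.rangeresolution}) that the asymptotic expansions of \Cref{sec.asympparam} hold uniformly in $k = \omega/c$ over $\cB$, together with the geometric condition \eqref{eq:geoimg} guaranteeing $\vy_{\parallel}' \neq \vx_{s,\parallel}$ so that $\phi'(\eta') \neq 0$ in \eqref{eq.derivphase} and $\max_{\vy'\in\cW}|\phi'(\eta')^{-1}|$ exists. Granting these, the argument is a clean combination of the single-frequency cross-range estimates and the range $\sinc$ mechanism, and the remaining steps are the same routine norm bookkeeping used in \Cref{prop.polacrossactiv}.
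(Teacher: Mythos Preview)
Your proposal is correct and follows essentially the same route as the paper: integrate the single-frequency identity \eqref{eq.singlefreqestimate} over the band, use the aligned-dipole expression \eqref{eq.asymmfrange} to produce the $\sinc$ factor, and bound it via \eqref{eq.taylorphi}--\eqref{eq.derivphase}; the second estimate is handled analogously from \eqref{eq.estimatemf}. Your additional remarks on the degeneracy of the cross-range Rayleigh factor and on the uniformity of the Fraunhofer remainders across $\cB$ are accurate and simply make explicit what the paper leaves implicit.
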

\begin{proof}
We first show the asymptotic expansion
\eqref{eq.rangepolaretensorcover1}, i.e. when $\vy=\vy_i$. Since
$\bGa_i$, $\vU_\parallel$ and $\vU_s$ are frequency independent, one has 
\begin{equation}
\widetilde{\bGa}- \widetilde{\bGa}_i= \frac{1}{B}\int_{B} d \omega  \,
(\widetilde{\bGa}(\omega)- \widetilde{\bGa}_i).
 \end{equation}
Hence \eqref{eq.singlefreqestimate} and the previous expression gives that
 \begin{equation*}
 \widetilde{\bGa}- \widetilde{\bGa}_i= \frac{  (4\pi L)^4}{B \operatorname{mes}\cA}  \int_{B} \,  \Big[ \vU_{\parallel}^{*}\,\big(\sum_{j\neq i}\vH_r\left(\vy_i,\vy_j;k\right) \, \bGa_j \,  \vH_s\left(\vy_i,\vy_j;k\right)^{\top} \big)\, \vU_s +o\Big(\frac{a^2}{L^2}\Big) \Big].
 \end{equation*}
Now recall $(\operatorname{mes}\cA)^{-1} = \cO(a^{-2})$. Using the
expression \eqref{eq.asymmfrange} of the imaging function for aligned
dipoles we get 
 \begin{equation*}
 \widetilde{\bGa}- \widetilde{\bGa}_i=
  \sum_{j\neq i}\widetilde{ \bGa}_j \exp[\imath \omega_0
  \big(\phi(\eta_j)-\phi(\eta_i)\big)/c ] \,
  \operatorname{sinc}\left(\frac{B
  \big(\phi(\eta_j)-\phi(\eta_i)\big)}{2 \, c}\right) +o(1).
  \end{equation*}
We arrive to \eqref{eq.rangepolaretensorcover1} by using the asymptotic
relation \eqref{eq.taylorphi} (which holds under the geometric
condition) to control the $\operatorname{sinc}$ and the fact that
$\widetilde{ \bGa}_j =\cO(1)$.  Finally, the asymptotic
formula \eqref{eq.drecraspolatensprrange2} can be proved from the
relation \eqref{eq.estimatemf} in a similar way.
\end{proof}

\subsection{Correction of oscillatory artifacts}\label{sec.phasecorrection}
In acoustics, it is a well-known that the reflection coefficient of a
point scatterer can only be recovered up to a complex phase, see e.g.
\cite{Novikov:2015:ISI}. A similar phenomenon is observed in
electromagnetism.  For the case of collocated sources and receivers
\cite{Cassier:2017:IPD}, the polarizability tensor of a dipole is for
all practical purposes, recovered only up to a complex phase because the
Kirchhoff imaging function oscillates in range. These oscillatory
artifacts can be corrected by fixing the phase of one of the components
of the estimated polarizability tensor \cite{Cassier:2017:IPD}.
In the non-collocated sources and receivers case, we observe
oscillatory artifacts in range (\cref{fig:r:stable}) and also in
cross-range (\cref{fig:cr:stable}). Here we explain where these
oscillatory artifacts come from and why they also occur in
cross-range.  Both oscillatory artifacts can be corrected in the same
manner as in \cite{Cassier:2017:IPD}, as is illustrated in
\cref{fig:cr:stable,fig:r:stable}.

To observe the oscillations in the range direction, we consider the
reconstruction formula \eqref{eq.estimatemf} in the case of a single
dipole ($N=1$) located at $\vy_*$ with polarizability tensor $\bGa_*$,
and for imaging points of the form $\vy=(\vy_{*,\parallel},L+\eta)$.
Using \eqref{eq.polareconst}, \cref{prop.asymKI} with the
relations \eqref{eq.defhraproxs}, \eqref{eq.Hsasymp}  and integrating
over the bandwidth, we rewrite \eqref{eq.estimatemf} as
\begin{eqnarray*}
\widetilde{\bGa}&=& \int_{|\omega-\omega_0|<B/2} d \omega  \,
\exp\Big[\imath \frac{\omega}{c}\, (\phi(\eta_*)-\phi(\eta))\Big]
\widetilde{\bGa}_*  +o(1)\\ &=&   \exp\left[\imath  \frac{\omega_0}{c}
(\phi(\eta_*)-\phi(\eta))\right]\operatorname{sinc}\Big(\frac{B
(\phi(\eta_*)-\phi(\eta))}{c}\Big) \widetilde{\bGa}_*+o(1).
\end{eqnarray*}
Under the geometric condition \eqref{eq:geoimg}, we have
$\phi(\eta_*)-\phi(\eta)=\phi'(\eta_*)(\eta_*-\eta)+o(\eta-\eta_*)$ with
$0<\phi'(\eta_*)<2 $ (see \eqref{eq.derivphase}), thus the presence of
the complex exponential  $\exp[\imath \omega_0\,
(\phi(\eta_*)-\phi(\eta)/c)]$ and the $\operatorname{sinc}$ causes the
image of 
$\widetilde{\bGa}$ to oscillate in $\eta$ around the dipole range
position $\eta_*$.  Compared to the case of collocated sources and
receivers  \cite{Cassier:2017:IPD}, where the function $\phi$ is replaced
by $\eta$, the presence of
the factor $\phi'(\eta_*)$ in the Taylor expansion of $\phi$ around
$\eta_*$ takes into account the relative positions of the source, the
array and the dipole. Furthermore, if the source range  $x_{s,3}$
is between the array and the imaging points, i.e. if $0<x_{s,3}<L+\eta$
for all imaging points, one has by \eqref{eq.derivphase} that
$1<\phi'(\eta_*)<2$. Thus, if the central angular frequency $\omega_0$
and the bandwidth $B$ are of the same order, {\em the oscillations'
length-scale is $c/B$, the focal spot size  in depth}. 
Therefore, if the range position $L+\eta_*$ of the dipole is not known
precisely, we cannot expect to accurately reconstruct
$\widetilde{\bGa}_*$.

We deal with this artifact by fixing the phase of one component of the
$2\times 2$ matrix $\widetilde{\bGa}$, as in \cite{Cassier:2017:IPD}.
The choice we made here is to enforce that the $1,1$ entry be real and
positive, i.e.  $\arg(\widetilde{\alpha}_{1,1}) =0$. This can be achieved
by post-processing the reconstruction formula \eqref{eq.estimatemf} by
the operation
$(\overline{\widetilde{\alpha}_{1,1}}/|\widetilde{\alpha}_{1,1}|) \,
\widetilde{\bGa}$.  If the coefficient $|\widetilde{\alpha}_{1,1}|$ is small, this
operation can be problematic. If necessary, the phase of another
coefficient of $\widetilde{\bGa}$ can be fixed or we can regularize 
by using
$(\overline{\widetilde{\alpha}_{1,1}}/(|\widetilde{\alpha}_{1,1}|+\delta))
\widetilde{\bGa}$, for a small $\delta>0$.

To explain the oscillations in the cross-range direction, we evaluate
the single frequency reconstruction formula \eqref{eq.polareconst} using
\cref{prop.asymKI}, \eqref{eq.defhraproxs} and \eqref{eq.Hsasymp}  at
image points $\vy=(\vy_{\parallel},L+\eta_*)$ in the cross-range of the
dipole $\vy_*$ to get 
\begin{equation}\label{eq.oscillcrossrange}
\widetilde{\bGa}= \exp \left[ \imath k
[\psi(\vy_{*,\parallel})-\psi(\vy_{\parallel})]\right]
\mathcal{F}[\mathds{1}_{\cA}]\Big(\frac{k}{L}(\vy_{*,\parallel}-\vy_{\parallel})\Big)
\widetilde{\bGa}_* +o(1),
\end{equation}
where the $o(1)$ remainder can be given explicitly by
$\cO\left(a^2\Theta_a /L^2\right)+\cO( \Theta_b)+ \cO\left(a/L\right)$
and the  smooth function $\psi$ is defined by
$\psi(\vy_{\parallel})=|\vx_s-\vy|$. To give an explicit
expression of these oscillations, we assume that $\cA$ is a square of
side $a$. Hence \eqref{eq.oscillcrossrange} can be computed explicitly 
with the Fourier transform of $\mathds{1}_{\cA}$ (as in
\eqref{eq.sinccrossrange}):
\[
\widetilde{\bGa}= \exp \Big[ \imath k
[\psi(\vy_{*,\parallel})-\psi(\vy_{\parallel})]\Big]
\operatorname{sinc}\Big(\frac{k\,a (\vy_{*,1} -\vy_{1})}{2  L}\Big)
\operatorname{sinc}\Big(\frac{k\, a (\vy_{*,2} -\vy_{2})}{2  L}\Big)
\widetilde{\bGa}_* +o(1).  \]
In the last formula, we have
\[
\psi(\vy_{*,\parallel})-\psi(\vy_{\parallel})= \nabla
\psi_{\vy_{*,\parallel}}\cdot
(\vy_{*,\parallel}-\vy_{\parallel})+o(|\vy_{*,\parallel}-\vy_{\parallel}|)
\mbox{ with }\nabla \psi_{\vy_{*,\parallel}}=
\frac{\vx_{s,\parallel}-\vy_{*,\parallel}}{|\vx_s-\vy_*|}. 
\]
Notice that $\nabla \psi_{\vy_{*,\parallel}}\neq 0$ since $\vx_{s,\parallel}\neq
\vy_{*,\parallel}$ under the geometrical condition \eqref{eq:geoimg}. 
On one hand, the presence of the $\operatorname{sinc}$ terms induces an
oscillation whose length scale is given by the Rayleigh criterion
$ka/L$, the cross-range focal spot size. On the
other hand, the complex exponential $\exp \left[ \imath k
[\psi(\vy_{*,\parallel})-\psi(\vy_{\parallel})]\right]$ induces an
oscillation whose length scale is controlled in each cross-range
direction (at the vicinity of $\vy_*$) by $2\pi /(k |\nabla
\psi_{\vy_{*,\parallel}}|)$ and thus could rapidly change with
$|\vx_s-\vy_*|$. With these oscillations,
one cannot expect to reconstruct $\widetilde{\bGa}$ accurately in
cross-range. Of course, the same post-processing operation
$(\overline{\alpha_{1,1}}/|\widetilde{\alpha}_{1,1}|) \,
\widetilde{\bGa}$ eliminates the oscillation artifacts in both range and
cross-range images of $\widetilde{\bGa}$. This is illustrated by the
numerical experiments in \cref{fig:cr:stable,fig:r:stable}. 
\subsection{Physical measurements}
\label{sec.physmeas}
We now show that the full data $\vPi$ is not necessary to 
obtain Kirchhoff images. We consider here the case where we only have
access to the projected data $\vP_{\parallel} \vPi
\vP_s=\vU_{\parallel}\, \widetilde{\vPi}\vU_s^{*}$ (where
$\widetilde{\vPi}$ defined by \eqref{eq:GPi:coeff}). This is a more
physically relevant setup as it assumes only the electric field
components parallel to the array can be measured and that the source
polarization can only be controlled in the two dimensional subspace
$(\vx_s - \vy_0)^\perp$ (see \cref{sec:poldata,sec:strategy}). To this
end, we show that the Kirchhoff imaging functions
$\cI_{KM}[\vP_{\parallel} \vPi \vP_s]$ and $\cI_{KM}[\vPi]$ have the same cross-range and range resolutions. In other words the additional information
contained in the full data problem does not bring any new information
about the position, at least in the Fraunhofer regime. This can be
expected because the wave emanating from the source is close to a plane
wave in the vicinity of the scatterer, making one component of the
electric field irrelevant. Similarly the scattered wave is a
close to a plane wave near the array (see \cref{sec:poldata}).
\begin{proposition}\label{prop.incompletedata}
The imaging function $\cI_{KM}[\vP_{\parallel} \vPi \vP_s](\cdot;k)$
satisfies the asymptotic relation \eqref{eq.asymp1} and the asymptotic
relation \eqref{eq.asymp2}. The multi-frequency imaging function
$\cI_{KM}[\vP_{\parallel} \vPi \vP_s](\cdot)$  satisfies the asymptotic
relations \eqref{eq.asympbf1} and \eqref{eq.asympbf2} in range. Moreover
using $\cI_{KM}[\vP_{\parallel} \vPi \vP_s](\cdot;k)$ to reconstruct
$\widetilde{\bGa}$ over the bandwidth (formulas \eqref{eq.estimatemf} and \eqref{eq.polareconst}) leads to the same results in range and cross-range. In other words 
the propositions \ref{prop.polacrossactiv} and
\ref{prop.polarange.mat} hold with the partial data $\vP_{\parallel}
\vPi \vP_s$.
\end{proposition}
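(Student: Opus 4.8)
The plan is to deduce everything from \cref{prop.asymKI} by combining the linearity of $\cI_{KM}$ in its data argument with the idempotency of the orthogonal projectors $\vP_\parallel$ and $\vP_s$ of \eqref{eq.projectdef}. Writing
\begin{equation*}
\vPi - \vP_\parallel \vPi \vP_s = (\vI - \vP_\parallel)\vPi + \vP_\parallel \vPi (\vI - \vP_s),
\end{equation*}
linearity of the imaging operator gives $\cI_{KM}[\vPi] - \cI_{KM}[\vP_\parallel\vPi\vP_s] = \cI_{KM}[(\vI-\vP_\parallel)\vPi] + \cI_{KM}[\vP_\parallel\vPi(\vI-\vP_s)]$. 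The goal is to show that both terms on the right are of the same order as the remainder $o(1)$ already present in \cref{prop.asymKI}, so that the projected data yields exactly the same leading-order image as the full data and every downstream estimate transfers unchanged.

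To bound the first term I would substitute the Fraunhofer expansion \eqref{eq.asymptgreenarray}, writing $\vG(\vx_r,\vy;k) = \widetilde{G}(\vx_r,\vy;k)(\vP_\parallel + \vR_r)$ with $\|\vR_r\| = \cO(a^2\Theta_a/L^2)+\cO(\Theta_b)+\cO(a/L)$, into the defining integral \eqref{eq:defkirch}. Since $\vU_\parallel=[\ve_1,\ve_2]$ is real, $\vP_\parallel$ is a real symmetric projector, so $\overline{\vP_\parallel}=\vP_\parallel$ and
\begin{equation*}
\overline{\vG(\vx_r,\vy;k)}\,(\vI - \vP_\parallel) = \overline{\widetilde{G}}\,(\vP_\parallel + \overline{\vR_r})(\vI-\vP_\parallel) = \overline{\widetilde{G}}\,\overline{\vR_r}\,(\vI-\vP_\parallel),
\end{equation*}
using $\vP_\parallel(\vI-\vP_\parallel)=\vzero$. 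Thus the leading projector cancels and the integrand carries an extra factor $\vR_r$; carrying this through the array integration exactly reproduces the remainder of \eqref{eq.spreadfunctionarray}, so $\cI_{KM}[(\vI-\vP_\parallel)\vPi]$ is absorbed into the $o(1)$ of \cref{prop.asymKI}. Symmetrically, as $\vU_s$ is a real basis $\vP_s$ is a real projector, and from the source expansion \eqref{eq.aproxsource}, $\vG(\vx_s,\vy;k) = (e^{\imath k|\vx_s-\vy|}/4\pi L)(\vP_s + \vR_s)$ with $\|\vR_s\| = \cO(b/L)+\cO(a^2/(\Theta_a L^2))$, the identity $(\vI-\vP_s)\vP_s=\vzero$ forces $(\vI-\vP_s)\overline{\vG(\vx_s,\vy;k)}$ to be of order $\|\vR_s\|$, so $\cI_{KM}[\vP_\parallel\vPi(\vI-\vP_s)]$ is absorbed into the remainder of \eqref{eq.asympspreadfunctionsource}.

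This establishes $\cI_{KM}[\vP_\parallel\vPi\vP_s] = \cI_{KM}[\vPi] + o(1)$ at a single frequency with the same $o(1)$ as in \cref{prop.asymKI}, whence the cross-range estimates \eqref{eq.asymp1}--\eqref{eq.asymp2} of \cref{prop.crossrang} hold with the projected data. Integrating over the band exactly as in the proof of \cref{prop.rangeresolution} then yields \eqref{eq.asympbf1}--\eqref{eq.asympbf2}, and the recovery of $\widetilde{\bGa}$ carries over because the reconstruction \eqref{eq.polareconst} acts through $\vU_\parallel^*\cI_{KM}\vU_s$, and $\vU_\parallel^*\vP_\parallel = \vU_\parallel^*$ and $\vP_s\vU_s = \vU_s$ (since $\vU_\parallel$ and $\vU_s$ have orthonormal columns), so the projection is invisible to the extracted $2\times2$ tensor; hence \cref{prop.polacrossactiv,prop.polarange.mat} transfer verbatim. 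The only substantive work is bookkeeping, and the main (though mild) obstacle is to verify that the $\vR_r,\vR_s$ contributions remain uniform in $\vx_r\in\cA$ and in $k$ across the band, so that the array integration in \eqref{eq.defimagingfuction} and the frequency averaging in \eqref{eq.Kirchmf}--\eqref{eq.estimatemf} do not inflate the remainder beyond that of \cref{prop.asymKI}.
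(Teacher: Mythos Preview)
Your proposal is correct and is essentially the same argument as the paper's, just organized as a bound on the difference $\cI_{KM}[\vPi]-\cI_{KM}[\vP_\parallel\vPi\vP_s]$ rather than a direct recomputation of $\cI_{KM}[\vP_\parallel\vPi\vP_s]$. Both proofs hinge on the same observation: in the Fraunhofer expansions \eqref{eq.asymptgreenarray} and \eqref{eq.aproxsource} the leading projectors $\vP_\parallel$ and $\vP_s$ are idempotent, so inserting $\vP_\parallel$ and $\vP_s$ into the data leaves the leading term unchanged and only perturbs the $\vR_r$, $\vR_s$ remainders, after which the downstream propositions follow verbatim.
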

\begin{proof}
Using the linearity of the Kirchhoff imaging function with
respect to the data, we consider the case of a single dipole located at
$\vy_*=(\vy_{*,\parallel},L+\eta_*)$ with polarizability tensor $\bGa_*$.
For $\vy=(\vy_\parallel,L+\eta_*)$ and partial data $\vP_{\parallel} \vPi
\vP_s$ the image is
\begin{equation}\label{eq.partialdata1}
\begin{aligned}
\cI_{KM}&[\vP_{\parallel} \vPi \vP_s](\vy, k)\\
&=\int_{\cA}d\vx_{r,\parallel}  \,
\overline{\vG\left(\vx_r, \vy;k\right)} \,  \vP_{\parallel}
\vG\left(\vx_r,\vy_*;k\right) \, \bGa(\vy_*)  \vG\left(\vy_*,\vx_s;k\right) \, \vP_s
\overline{\vG\left(\vx_s,\vy; k\right)}.   
\end{aligned}
\end{equation}
Using the asymptotic expression of the dyadic Green function
\eqref{eq.asymptgreenarray}, we can show in the same way as for the
relation  \eqref{eq.spreadfunctionarray} that
\begin{equation}\label{eq.spreadfunctionarrayincompletedata}
\int_{\cA}d\vx_{r,\parallel}  \,
\overline{\vG\left(\vx_r, \vy;k\right)} \,  \vP_{\parallel}
\vG\left(\vx_r,\vy_*;k\right) =\widetilde{\vH}_r\left(\vy,\vy_*;k\right)
+ \cO\Big(\frac{a^4\Theta_a}{L^4}\Big)+ \cO\Big(\frac{a^2\Theta_b}{L^2} \Big)+\cO\Big(\frac{a^3}{L^3} \Big).
\end{equation}
We now consider the term linked to the source in
\eqref{eq.partialdata1}. By applying the asymptotics relation
\eqref{eq.aproxsource}, we can show as in \eqref{eq.asympspreadfunctionsource} that 
\begin{equation}\label{eq.partialdatasource}
 \vG\left(\vy_*,\vx_s;k\right) \, \vP_s
\overline{\vG\left(\vx_s,\vy; k\right)}  = \frac{1}{(4\pi L)^2} \Big[\widetilde{\vH}_s\left(\vy,\vy_*;k\right)^{\top}+O\Big(\frac{b}{L}\Big)+\cO\Big(\frac{a^2}{\Theta_a L^2}\Big)\Big].
\end{equation}
Thus, by combining \eqref{eq.spreadfunctionarrayincompletedata} and
\eqref{eq.partialdatasource}, we get
\[
\cI_{KM}[\vP_{\parallel}\vPi \vP_s](\vy, k)=  \frac{1}{(4\pi L)^2}  \widetilde{\vH}_r\left(\vy,\vy_*;k\right) \,
\bGa(\vy_*) \, \widetilde{\vH}_s\left(\vy,\vy_*;k\right)^{\top}+o(1),
\]
where the $o(1)$ remainder can be explicitly given by $\cO\left(a^4\Theta_a/L^4\right)+ \cO\left(a^2\Theta_b/L^2 \right)+\cO\left(a^3/L^3 \right)$.
The rest of the proof is identical to the proof of \cref{prop.crossrang}
since $\cI_{KM}[\vP_{\parallel} \vPi \vP_s](\vy;k)$ and $\cI_{KM}[\vPi](\vy; k)$
have exactly the same Fraunhofer asymptotic expansion. Thus, the
asymptotic expansions \eqref{eq.asympbf1} and \eqref{eq.asympbf2} in
range can be derived exactly in the same way as in \cref{sec.rangeesti}.
Finally, as these new data have the same asymptotic properties as
the full data, doing the same post-processing as in
\cref{sec.crossrangepola,sec.rangeesti} leads to an accurate reconstruction
of $\widetilde{\bGa}$, in both cross-range and range. To be more precise,
\cref{prop.polacrossactiv,prop.polarange.mat} can also be proved identically.

\end{proof}

\section{The coherency matrix from time domain electric field autocorrelations}\label{sec:stochillum}
Here we show that the coherency matrix data \eqref{eq:cohmat} can be
obtained from measuring the time domain electric field autocorrelations
at the array resulting from illuminating with a point source that is
driven by a random process (i.e. white light). To this end, the
assumptions \eqref{eq:js:assumption} need to be supplemented to account for
the frequency dependence of the polarization vector $\vj_s(\omega)$.
Indeed, since the time domain polarization vector $\vj_s(t)$ is real,
$\vj_s(\omega)$ obeys the reflection principle $\vj_s(-\omega) =
\overline{\vj_s(\omega)}$. Taking this into account, we must have that for
any $\omega, \omega' \in \real$
\begin{equation}
 \begin{aligned}
   \Ma{ \vj_s(\omega) } &= 0\\
   \Ma{\vj_s(\omega) \vj_s(\omega')^T} &= \delta(\omega + \omega') \vU_s
   \widetilde{\vJ}_s(\omega) \vU_s^*,
   ~\text{and}~\\
   \Ma{\vj_s(\omega) \vj_s(\omega')^*} &= \delta(\omega - \omega') \vU_s
   \widetilde{\vJ}_s(\omega) \vU_s^*,
   \label{eq:js:omega}
 \end{aligned}
\end{equation}
where $\widetilde{\vJ}_s(\omega)$ is a (known) $2\times 2$ Hermitian matrix with
$\widetilde{\vJ}_s(-\omega) = \overline{\widetilde{\vJ}_s(\omega)}$. With these assumptions it is easy to see that $\vj_s(t)$ is a real Gaussian stationary random vector satisfying
\begin{equation}
  \Ma{\vj_s(t)} = \vzero,~\text{and}~
  \Ma{\vj_s(t+\tau)\vj_s(t)^T} =  \vU_s \widetilde{\vJ}_s(\tau) \vU_s^*,
  \label{eq:js:t}
\end{equation}
where $\widetilde{\vJ}_s(\tau)$ is the Fourier transform (using the
convention \eqref{eq:fourier:convention}) of
$\widetilde{\vJ}_s(\omega)$. This is a version of the Wiener-Khinchin
theorem (see e.g. \cite{Ishimaru:1997:WPS}) for vectors.
The next proposition shows that when using a random source, measuring
empirical autocorrelations of the electric field at the array can be
used to find the coherency matrix data $\vpsi(\omega)$ in
\eqref{eq:cohmat}, provided the acquisition time is large.

\begin{proposition}[Statistical stability]\label{prop:statstab} Assume
$\vj_s(t)$ is a stationary Gaussian process satisfying
\eqref{eq:js:t} and the Born approximation holds. For each $\vx_r\in\cA$, the empirical
autocorrelations are
\begin{equation}\label{eq:empauto}
\vpsi_{emp}(\vx_r,\tau) = \frac{1}{2T}\int_{-T}^T dt
\vU_\parallel^* \vE(\vx_r,t+\tau)\vE(\vx_r,t)^T \vU_\parallel.
\end{equation}
Their mean is independent of the measurement time $T$ and is given by
\begin{equation}\label{eq:avgempauto}
\Ma{\vpsi_{emp}(\vx_r,\tau)} = (2\pi) \int d\omega
e^{-\i\omega\tau} \vpsi(\vx_r,\omega),
\end{equation}
Moreover, the empirical autocorrelations are ergodic:
\begin{equation}\label{eq:ergodic}
\vpsi_{emp}(\vx_r,\tau)
\rightarrow 
\Ma{\vpsi_{emp}(\vx_r,\tau)} ~\text{as}~ T \to \infty.
\end{equation}
\end{proposition}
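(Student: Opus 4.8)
The plan is to treat the two assertions separately: a direct frequency-domain computation for the mean \eqref{eq:avgempauto}, and a mean-square (variance $\to 0$) argument for the ergodicity \eqref{eq:ergodic}, which is where the real work lies.

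\textbf{Mean.} Writing $\vM(\omega) := \vG(\vx_r,\vx_s;\omega/c)+\vPi(\vx_r,\vx_s;\omega/c)$, the Born-approximated field is $\vE(\vx_r;\omega)=\vM(\omega)\vj_s(\omega)$, so by \eqref{eq:fourier:convention} we have $\vE(\vx_r,t)=\int d\omega\,\vM(\omega)\vj_s(\omega)e^{-\i\omega t}$. I would interchange the expectation with the finite time integral in \eqref{eq:empauto}, substitute this representation into $\vE(\vx_r,t+\tau)\vE(\vx_r,t)^T$, and take the expectation of the resulting double frequency integral. The transpose correlation in \eqref{eq:js:omega} contributes the factor $\delta(\omega+\omega')$, which collapses the double integral to a single one and makes the $t$-dependence cancel, since $e^{-\i\omega(t+\tau)}e^{\i\omega t}=e^{-\i\omega\tau}$. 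The reality relation $\vM(-\omega)=\overline{\vM(\omega)}$ (a consequence of $\vj_s(-\omega)=\overline{\vj_s(\omega)}$ and the reality of the time-domain field) rewrites $\vM(-\omega)^T$ as $\vM(\omega)^*$, so that after projecting by $\vU_\parallel$ the integrand is exactly the coherency matrix $\vU_\parallel^*\vM\vU_s\widetilde{\vJ}_s(\omega)\vU_s^*\vM^*\vU_\parallel=\vpsi(\vx_r,\omega)$ of \eqref{eq:cohmat2}. Since this integrand is independent of $t$, the average $\tfrac{1}{2T}\int_{-T}^T dt$ is trivial; this yields \eqref{eq:avgempauto} (with the overall constant fixed by the Fourier convention) and in particular its independence of $T$.

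\textbf{Ergodicity (the main obstacle).} Because $\vj_s(t)$ is real Gaussian and $\vE(\vx_r,t)$ is a bounded linear (convolution) functional of it, $\vE(\vx_r,t)$ is itself a real, stationary Gaussian vector process. Fix $\tau$ and let $Y(t)$ be any scalar entry of $\vU_\parallel^*\vE(\vx_r,t+\tau)\vE(\vx_r,t)^T\vU_\parallel$; each $Y$ is a quadratic functional of a Gaussian process, hence has finite moments of all orders. The plan is to show the variance of its time average vanishes. By Isserlis/Wick's theorem the covariance $\Cov(Y(t),Y(t'))$ decomposes into a sum of products of the real field two-point functions $\Ma{E_a(s_1)E_b(s_2)}$; repeating the computation of the Mean step (with $s_1-s_2$ in place of $\tau$) shows each such two-point function depends only on $s_1-s_2$ and equals a component of the time-domain Fourier transform (convention \eqref{eq:fourier:convention}) of the spectral density $\vM\vU_s\widetilde{\vJ}_s(\omega)\vU_s^*\vM^*$. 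Since this spectral density is integrable — it is supported in the finite frequency band of \cref{sec.rangeesti}, on which $\vM$ and $\widetilde{\vJ}_s$ are bounded — the Riemann--Lebesgue lemma gives decay of each two-point function as $|s_1-s_2|\to\infty$. Consequently $C(s):=\Cov(Y(t+s),Y(t))$ is well defined by stationarity, is integrable, and satisfies $C(s)\to 0$ as $|s|\to\infty$.

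With this decay in hand, the mean-ergodic estimate finishes the argument:
\begin{equation*}
 \Var\!\Big[\tfrac{1}{2T}\!\int_{-T}^{T}\! Y(t)\,dt\Big]
 =\frac{1}{(2T)^2}\int_{-T}^{T}\!\!\int_{-T}^{T}\! C(t-t')\,dt\,dt'
 \le \frac{1}{2T}\int_{-2T}^{2T}\! |C(s)|\,ds,
\end{equation*}
which tends to $0$ as $T\to\infty$ by dominated convergence (or Cesàro averaging of $|C|$). Hence each entry of $\vpsi_{emp}(\vx_r,\tau)$ converges in mean square to its mean, giving \eqref{eq:ergodic} in any matrix norm. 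The one delicate point is the Wick expansion of $\Cov(Y(t),Y(t'))$ together with the integrability/decay of the field two-point functions; once the spectral density is known to lie in $L^1$, the ergodic estimate above is routine.
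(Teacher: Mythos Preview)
Your argument is correct. The mean computation is essentially the same as the paper's, except that you stay in the frequency domain and invoke the $\delta(\omega+\omega')$ relation from \eqref{eq:js:omega} directly, whereas the paper passes to time-domain convolutions and uses \eqref{eq:js:t}; the two are equivalent.

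For the ergodicity, your route genuinely differs. The paper carries out the fourth-moment (Isserlis) expansion explicitly in the frequency domain: writing the covariance of entries of $\vpsi_{emp}$ as a double frequency integral against a $\sinc^2((\omega-\omega')T)$ kernel, it shows that $T\cdot\Var(\psi_{ij}(\tau))$ tends to an explicit finite limit via the approximate identity $T\int\sinc^2(T\omega)\,d\omega=\pi$. This gives the sharp rate $\Var=\cO(1/T)$ together with explicit constants. You instead argue structurally: the Wick terms are products of two field two-point functions, each the Fourier transform of an integrable (bounded, band-limited) spectral density, hence decaying by Riemann--Lebesgue; then the standard mean-ergodic estimate with Ces\`aro averaging finishes. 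This is shorter and more robust (it would work under weaker hypotheses on $\widetilde{\vJ}_s$), but as written it yields only $\Var\to 0$. If you want the paper's $\cO(1/T)$ rate from your approach, note that band-limited bounded spectral density is in $L^2$, so by Plancherel each two-point function is in $L^2$, hence their product $C(s)\in L^1$; then $\frac{1}{2T}\int_{-2T}^{2T}|C|\le \frac{1}{2T}\|C\|_{L^1}$ gives the rate. That closes the small gap between your claim ``$C$ is integrable'' and the Riemann--Lebesgue justification you actually supplied.
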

\Cref{prop:statstab} is the electromagnetic analogue of a statistical
stability result for scalar waves that was proven originally in
\cite{Garnier:2009:PSI} (see also \cite{Bardsley:2016:IPC}) and is
proved in \Cref{sec:app_statstab}.

\section{Numerical experiments}
\label{sec:numerics}
We illustrate the proposed imaging method with numerical experiments in a regime corresponding to microwaves propagating in vacuum
(\cref{sec.physparam}). We start in \cref{sec.numdet} with experiments
where the data is the coherency matrix over a certain bandwidth (as
defined in \cref{sec:problem:setup}).  We refer to these experiments as
deterministic, because no empirical averaging is performed. We
investigate the behavior of the imaging routine for a few dipole
scatterers, as well as for an extended scatterer. The visualization of
the $2 \times 2$ projected polarizability tensors that appear in our
images is explained in \cref{sec.viz}. We then report in
\cref{sec.numstoc} an experiment with the electric dipole source being
driven by a time domain Gaussian process and the measured data are the
empirical correlations \eqref{eq:empauto}.

\subsection{Physical parameters}\label{sec.physparam}
In all of our experiments, we assume a homogeneous background medium
with wave speed given by that of light in vacuum, i.e., $c =
(\varepsilon\mu)^{-1/2} = 3\times 10^8$ ms$^{-1}$. We use the central
frequency $\omega_0/(2\pi) = 2.4$ GHz, which gives a central wavelength
of $\lambda_0 = 0.125$ m. Our receiver array consists of $61\times 61$
receivers located at the points $\cA = \{(u_i,v_j,0):
u_i=i(20\lambda_0/61), v_j=j(20\lambda_0/61), i,j=-30,\ldots,30\}.$ This
corresponds to a square array of side $a=20\lambda_0$ in the $x_3=0$
plane, centered at the origin, consisting of $61\times 61$ uniformly
spaced receivers. We consider a characteristic propagation distance of
$L=100\lambda_0$, and place a single point source at the location $\vx_s
= (L/2,0,L(1-\sqrt{3/4}))$.  This corresponds to a point source located
on the $x_2=0$ plane, at a distance $L$ from the reference point
$\vy_0=(0,0,L)$ and the source is outside of the region $\cR(3)$
defined in \eqref{eq:geoimg}. All of the scatterers we consider are located near
$\vy_0$, so we recover Kirchhoff images for points $\vy$ within the cube
of side $30\lambda_0$ centered at $\vy_0$, i.e.,
\[
\cW = \{\vy\in\real^3: \|\vy-\vy_0\|_\infty \leq 15\lambda_0\}.
\]

\subsection{Matrix visualization convention}\label{sec.viz}
Recall that the imaging method of \cref{sec:imaging} only recovers a
$2\times 2$ projection of the $3\times 3$ complex polarizability tensor.
The recovered polarizability tensor is not symmetric in general (it is
symmetric for the collocated sources and receivers case \cite{Cassier:2017:IPD}).
We visualize matrices $\vA \in \real^{2 \times 2}$ with the ellipse
$\cE(\vA) = \{ \vA \vv ~|~ \|\vv\|_2=1\}$. To emphasize that the
matrices we recover are not symmetric, we also display (when possible)
the vectors $\sigma_1 \vv_1$ and $\sigma_2 \vv_2$, where we have used
the singular values $\sigma_j$ and right singular vectors $\vv_j$,
$j=1,2$, from
the SVD $\vA = [\vu_1 \vu_2] \diag(\sigma_1,\sigma_2) [\vv_1^T
\vv_2^T]$. Indeed, if the matrix $\vA$ is symmetric then $\vv_j = \vu_j$
(up to a sign) so the vectors $\sigma_1 \vv_1$ and $\sigma_2 \vv_2$
would coincide with the principal axes of the ellipse $\cE(\vA)$. An
example of the visualization is presented in \cref{fig:viz} for the
matrices
\begin{equation}
\label{eq:a1a2}
 \vA_1 = \begin{bmatrix} 1&-1\\ 0&1 \end{bmatrix} 
 ~\text{and}~
 \vA_2 = \begin{bmatrix} 1&2\\ 2&1 \end{bmatrix}.
\end{equation}
Finally we note that the ellipses we display in all the following
figures are normalized to an appropriate size, and that when the matrix
$\vA$ is complex, we simply visualize its real and imaginary part
separately, using the same convention.
\begin{figure}
\begin{center}
  \includegraphics[width=0.2\textwidth]{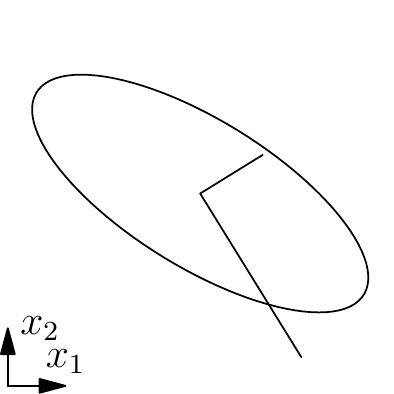}
  \hspace{0.1\textwidth}
  \includegraphics[width=0.2\textwidth]{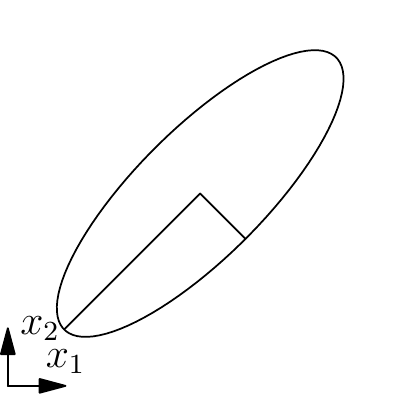}
 \end{center}
 \caption{Sample visualization using ellipses for a non-symmetric matrix
 $\vA_1$ (left) and a symmetric matrix $\vA_2$ (right), as defined in
 \eqref{eq:a1a2}. The ellipses
 have been normalized so that the longest principal axes have the same length. The
 departure of the vectors from the principal axes indicates a
 non-symmetric matrix.}
 \label{fig:viz}
\end{figure}

\subsection{Deterministic experiments}\label{sec.numdet}
For each $\vx_r\in\cA$ and for $501$ uniformly spaced frequencies in the
frequency band $\cB/(2\pi)=[1.2,3.6]$ GHz, we generate coherency matrix
data $\vPsi(\vx_r;\omega)$ using formula \eqref{eq:cohmat2} with a
constant source coherency matrix 
\[
\widetilde{\vJ}_s(\omega) \equiv \begin{bmatrix}1 & 0\\0 & 1\end{bmatrix}.
\]
Using \eqref{eq:prepro}, we preprocess the data for each $\vx_r\in\cA$
and $\omega\in\cB$ to recover the array response tensor $\vp(\vPsi)
(\vx_r,\vx_s;k)$, and then compute the Kirchhoff imaging function
$\cI_{KM}[ \vp(\vPsi) ](\vy;\omega/c)$ for each $\vy$ in the
imaging window $\cW$. Reconstruction of the projected polarizability tensor
$\widetilde{\bGa}(\vy)$ is performed by integrating
$\widetilde{\bGa}(\vy;\omega)$ over the frequency band $\cB$ (as in
\eqref{eq.estimatemf}), where we recover $\widetilde{\bGa}(\vy;\omega)$
using a slightly different formula (see \cref{rem:numalpharec}):
\begin{equation}\label{eq.reconstructsingle2}
\widetilde{\bGa}(\vy;\omega) =
\big( \vU_{\parallel}^{*} \vH_{r}(\vy,\vy;k) \vU_{\parallel}\big)^{-1} \widetilde{\cI}_{KM}
(\vy;\omega/c)\big(  \vU_{s}^{*} \vH_{s}
(\vy,\vy;k)  \vU_{s}\big)^{-1}.
\end{equation}
Finally, we perform the phase correction described in 
\cref{sec.phasecorrection}.

For our first experiment, we consider three dipole scatterers located at
${\vy}_1 = (-6\lambda_0,-5\lambda_0,100\lambda_0)$, ${\vy}_2 =
(7\lambda_0,-5\lambda_0,100\lambda_0)$, and ${\vy}_3 =
(5\lambda_0,8\lambda_0,106\lambda_0)$, with respective polarizability
tensors
\begin{equation}\label{eq:num:poltens}
\begin{array}{c}
\bGa_1 = 
\begin{bmatrix}
2+\imath& -\imath& 1\\
-\imath & 1+2\imath& \imath\\
1 & \imath & 1+\imath
\end{bmatrix},\quad
\bGa_2 = 
\begin{bmatrix}
2+2\imath & -1+\imath & \imath/2\\
-1+\imath & 1+2\imath & 0\\
\imath/2 & 0 & 1
\end{bmatrix},\\
\bGa_3 = 
\begin{bmatrix}
2-2\imath & 1+\imath & 0\\
1+\imath & 1+2\imath & (1-\imath)/2\\
0 & (1-\imath)/2  & \imath
\end{bmatrix}.
\end{array}
\end{equation}
\Cref{fig:cr:det} compares cross-range images formed from the true array
response $\vPi$ and the recovered array response 
$\vp(\vPsi)$, at the range locations $x_3=100\lambda_0$ and
$x_3=106\lambda_0$. The colormap indicates the Frobenius norm of the
recovered polarizability tensor
$\|\widetilde{\bGa}(\vy)\|_F$ at each image point $\vy$. The true 
polarizability tensor norms are given by
$\|\widetilde{\bGa}_1\|_F\approx3.44,
\|\widetilde{\bGa}_2\|_F\approx3.93$,
$\|\widetilde{\bGa}_3\|_F\approx3.82$, thus we observe accurate
recovery of the norm. Meanwhile, the ellipses/axes depict the $2\times
2$ tensors $\widetilde{\bGa}_i$ at the exact scatterer locations
$\vy_i$. We use solid white (resp.\@ dashed black) and solid yellow
(resp.\@ dashed magenta) ellipses/axes for the real and imaginary parts
of the true (resp.\@ recovered) tensor $\widetilde{\bGa}_i$, using
the visualization convention in \cref{sec.viz}. Both true and recovered
tensor are displayed with the phase correction in
\cref{sec.phasecorrection}. In a
similar fashion, \cref{fig:r:det} compares range images at the
cross-range locations $x_2=-5\lambda_0$ and $x_2=8\lambda_0$. In both
figures, the images are essentially indistinguishable. 

\begin{figure}
\begin{center}
\includegraphics[width=0.8\textwidth]{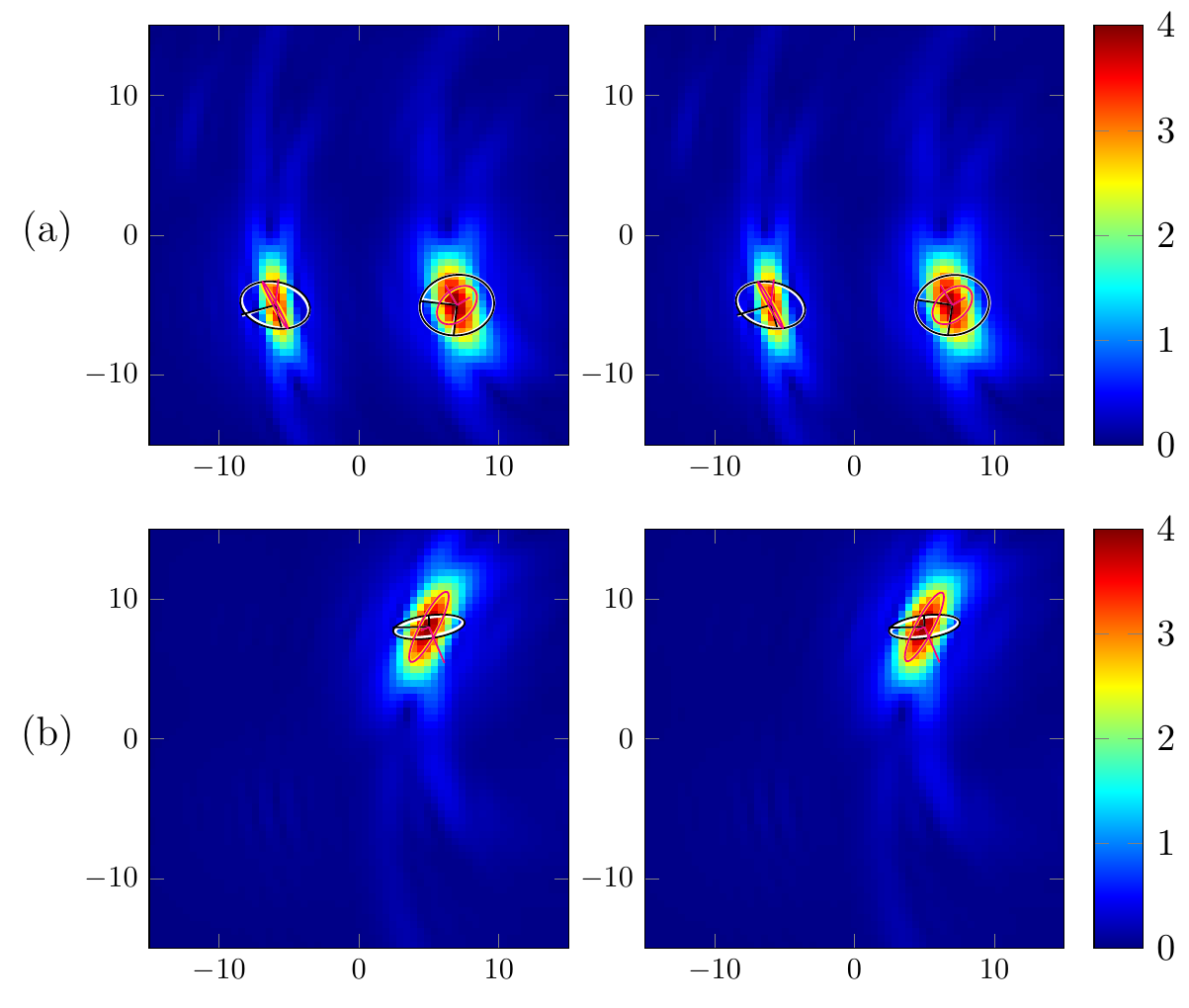}
\end{center}
\caption{Dipole scatterers: cross-range images of
$\|\widetilde{\bGa}(\vy)\|_F$ at range locations (a) $x_3=100\lambda_0$ and
(b) $x_3=106\lambda_0$. The left and right columns show reconstructions
from the true array response $\vPi$ and the recovered array response
$\vp(\vPsi)$, respectively. Here the true tensor
$\Re(\widetilde{\alpha}_{1,1}/|\widetilde{\alpha}_{1,1}|\widetilde{\bGa})$
(resp.
$\Im(\widetilde{\alpha}_{1,1}/|\widetilde{\alpha}_{1,1}|\widetilde{\bGa})$)
is depicted by the white (resp.\@ yellow) ellipses/axes. Similarly, the
recovered tensor is depicted using black (real) and magenta (imaginary)
ellipses/axes.} \label{fig:cr:det}
\end{figure}

\begin{figure}
\begin{center}
\includegraphics[width=0.8\textwidth]{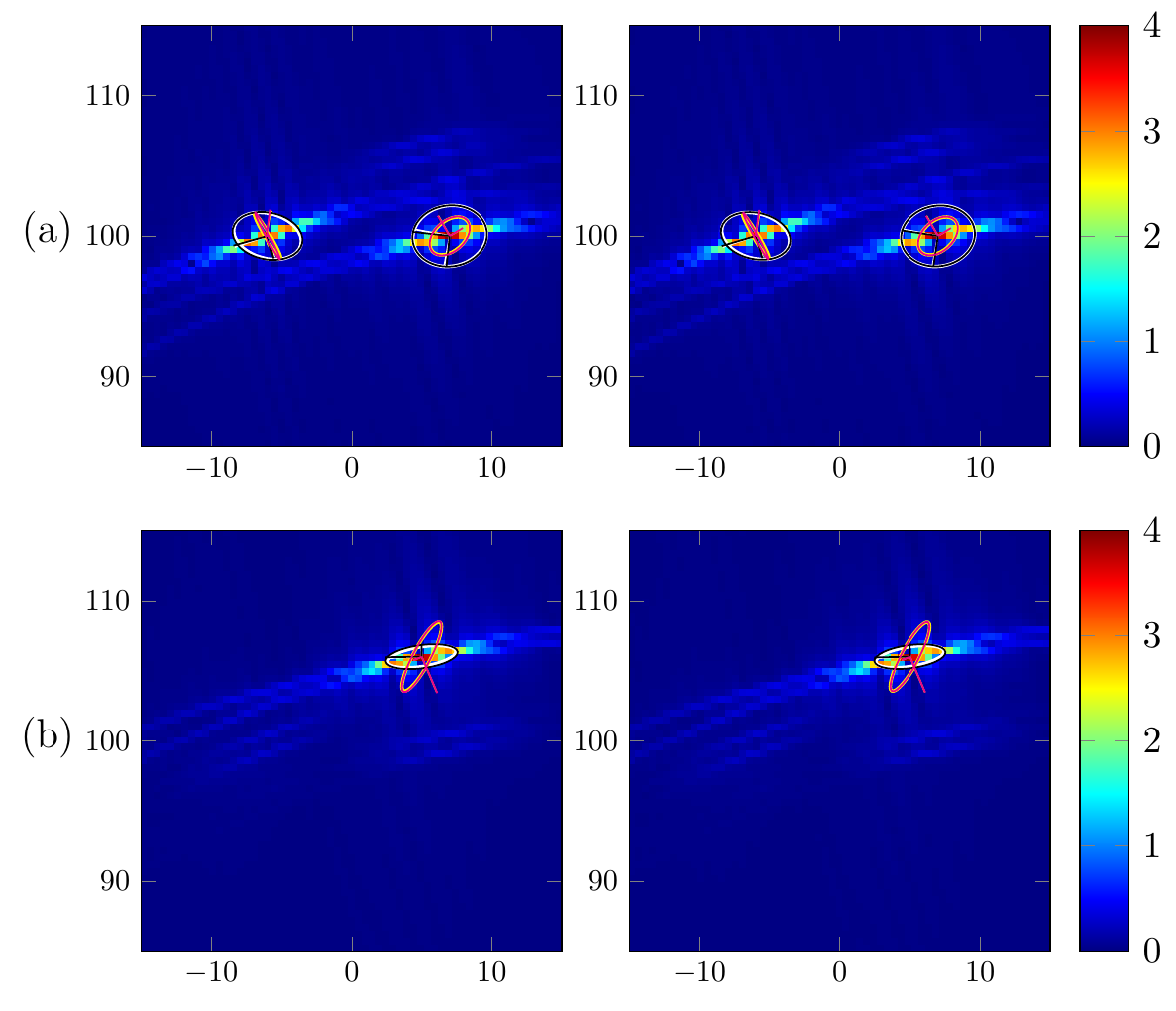}
\end{center}
\caption{Dipole scatterers: range images of
$\|\widetilde{\bGa}(\vy)\|_F$ at cross-range locations (a)
$x_2=-5\lambda_0$ and (b) $x_2=8\lambda_0$.  The left and right columns
show reconstructions from the true array response $\vPi$ and the
recovered array response $\vp(\vPsi)$, respectively. Here the true
tensor
$\Re(\widetilde{\alpha}_{1,1}/|\widetilde{\alpha}_{1,1}|\widetilde{\bGa})$
(resp.
$\Im(\widetilde{\alpha}_{1,1}/|\widetilde{\alpha}_{1,1}|\widetilde{\bGa})$)
is depicted by the white (resp.\@ yellow) ellipses/axes. Similarly, the
recovered tensor is depicted using black (real) and magenta (imaginary)
ellipses/axes.} \label{fig:r:det}
\end{figure}

Next, we demonstrate the oscillatory artifacts in the recovery of the
projected polarizability tensor
$\widetilde{\bGa}(\vy)$ and their correction, as explained in
\cref{sec.phasecorrection}. In \cref{fig:cr:stable}(a), we show a
cross-range image of the recovered tensor field
$\widetilde{\bGa}(\vy)$ near the scatterer location
$\vy_1=(-6\lambda_0,-5\lambda_0,100\lambda_0)$. We use the convention of
\cref{sec.viz} to represent with ellipses the projected polarizability
tensor $\widetilde{\bGa}(\vy)$, but omit the axes representing the
right singular vectors to avoid over-cluttering the image. The
ellipses all have the same longest principal axis and their color
corresponds to $\|\widetilde{\bGa}(\vy)\|_F$. We immediately notice that
the tensor oscillates wildly away from the scatterer position, which
confirms the analysis of \cref{sec.phasecorrection}.  In
\cref{fig:cr:stable}(b), we visualize the tensor field
$\overline{\widetilde{\alpha}_{1,1}(\vy)}/|\widetilde{\alpha}_{1,1}(\vy)|\widetilde{\bGa}(\vy)$
in a similar fashion. This imposes that $\widetilde{\alpha}_{1,1}(\vy)$
be real. The images in range for the same scatterer located at $\vy_1$
appear in \cref{fig:r:stable}. Oscillatory artifacts are present away
from the scatterer location and are suppressed using the same method. 
\begin{figure}
\begin{center}
\includegraphics[width=0.8\textwidth]{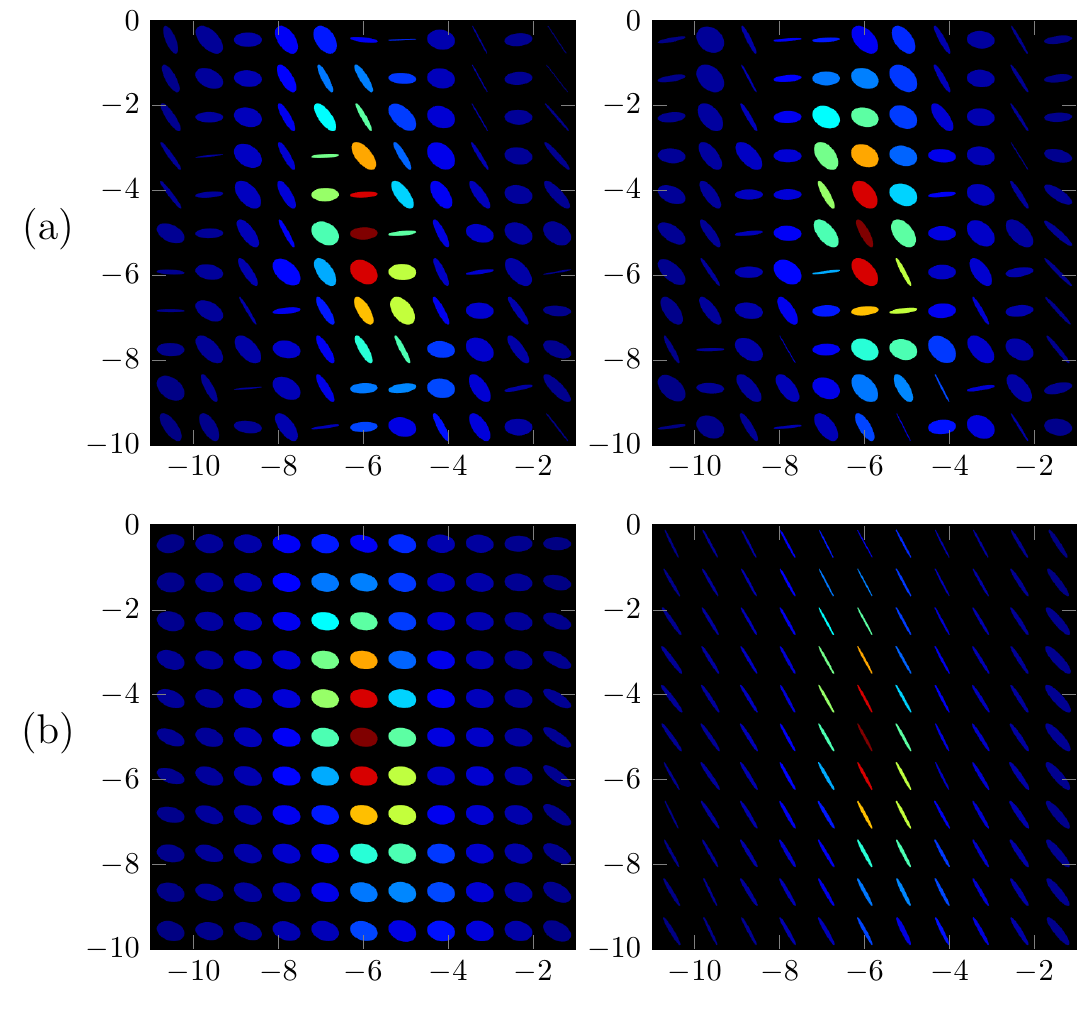}
\end{center}
\caption{Cross-range visualization of the recovered tensor
field $\widetilde{\bGa}(\vy)$ near $
(-6\lambda_0,-5\lambda_0,100\lambda_0)$. Row (a) shows the tensor field
$\widetilde{\bGa}$ with oscillatory artifacts, while row (b) shows the
field
$\overline{\widetilde{\alpha}_{1,1}}/|\widetilde{\alpha}_{1,1}|\widetilde{\bGa}$
with suppressed oscillatory artifacts.
The left and right columns show the real and imaginary parts of the
tensors, respectively.}
\label{fig:cr:stable}
\end{figure}

\begin{figure}
\begin{center}
\includegraphics[width=0.8\textwidth]{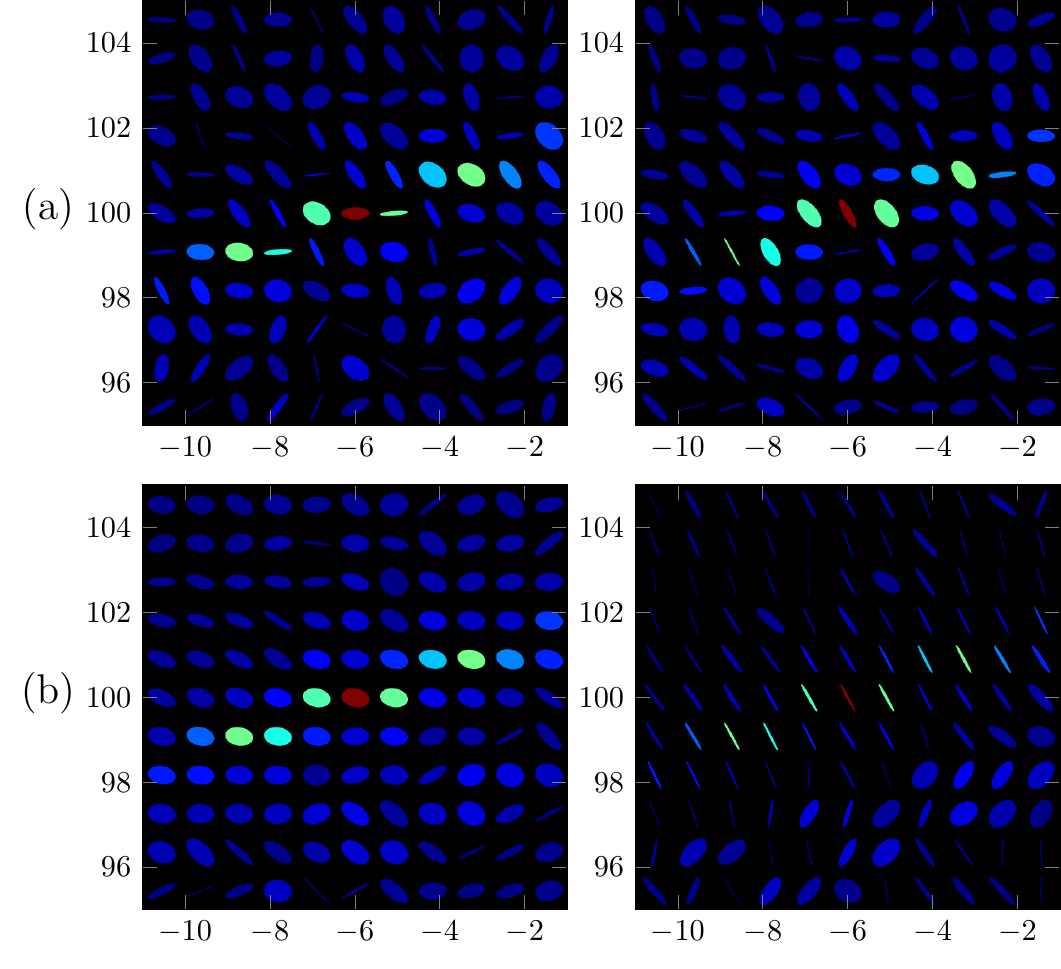}
\end{center}
\caption{Stability: range visualization of the recovered tensor field
$\widetilde{\bGa}(\vy)$ near $
(-6\lambda_0,-5\lambda_0,100\lambda_0)$. Row (a) shows the tensor field
$\widetilde{\bGa}$, while row (b) shows the stabilized tensor field
$\overline{\widetilde{\alpha}_{1,1}}/|\widetilde{\alpha}_{1,1}|\widetilde{\bGa}$.
The left and right columns show the real and imaginary parts of the
tensors, respectively.}
\label{fig:r:stable}
\end{figure}

As a final deterministic experiment, we consider an extended 
scatterer. For simplicity we choose  a cube of side $5\lambda_0$ centered at $(0,0,100\lambda_0)$.
We simulate this scatterer using a collection of dipole scatterers
separated by $\lambda_0/4$, each with polarizability tensor
\[
\bGa_0 = 
\begin{bmatrix}
2-\imath& 3+2\imath& 0\\
3+2\imath& -1& 0\\
0& 0& 1
\end{bmatrix}.
\]
We visualize $\|\widetilde{\bGa}(\vy)\|_F$ in cross-range and range in
\cref{fig:ext}(a) and \cref{fig:ext}(b) respectively, where the outline
of the true scatterer is indicated in cyan. In the range image we find
that
the Kirchhoff imaging routine only resolves discontinuities in the
medium wave speed, while the interior of the cube is not accurately
recovered.  This is similar to what happens in acoustics, see e.g.
\cite{Blei:2013:MSI}. In \cref{fig:ext:stable}, we visualize the
corresponding tensor field with suppressed oscillatory artifacts
$\overline{\widetilde{\alpha}_{1,1}(\vy)}/|\widetilde{\alpha}_{1,1}(\vy)|\widetilde{\bGa}(\vy)$.

\begin{figure}
\begin{center}
\includegraphics[width=0.8\textwidth]{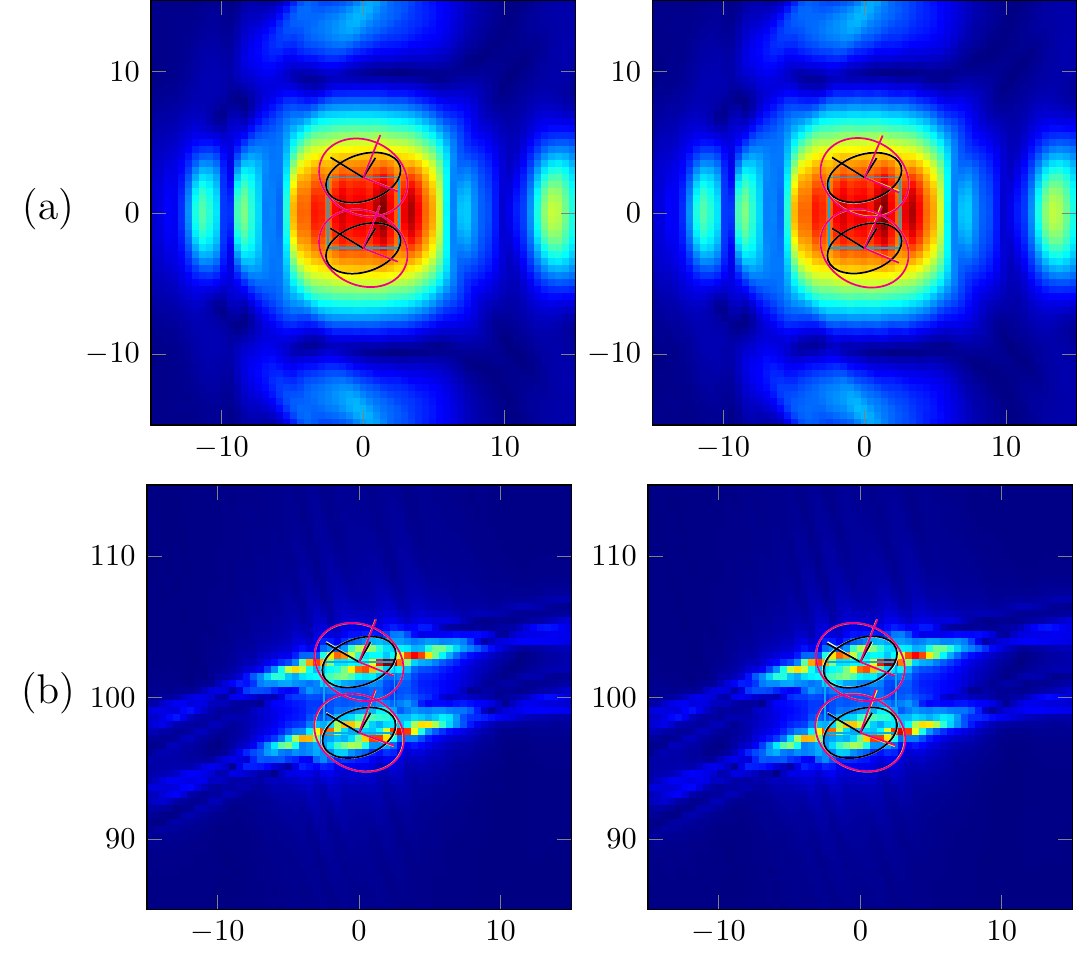}
\end{center}
\caption{Extended scatterer (cube): visualization of
$\|\widetilde{\bGa}(\vy)\|_F$ in (a) cross-range at $x_3=100\lambda_0$ and
(b) range at $x_2=0$. The left and right columns show reconstructions from
the true array response $\vPi$ and the recovered array response
$\vp(\vPsi)$, respectively.}\label{fig:ext}
\end{figure}

\begin{figure}
\begin{center}
\includegraphics[width=0.8\textwidth]{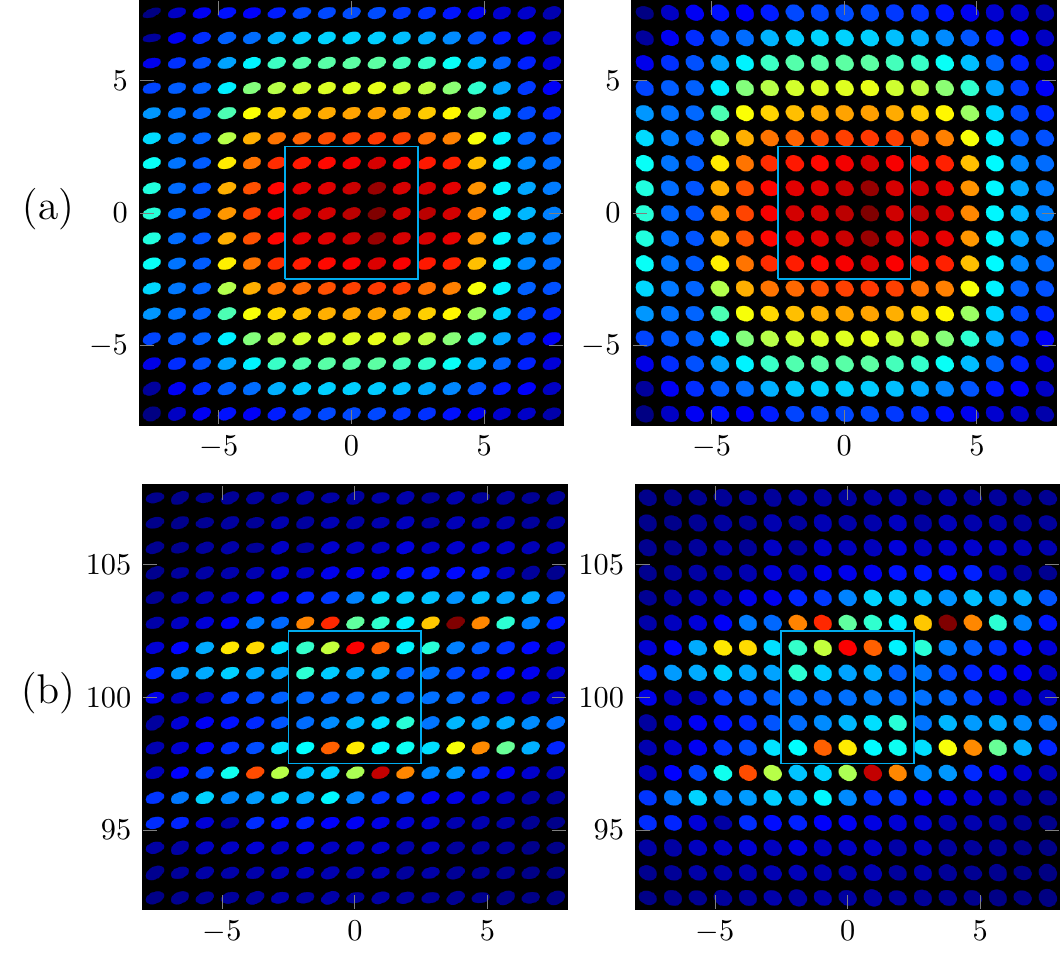}
\end{center}
\caption{Extended scatterer (cube): visualization of the stabilized tensor
field in (a) cross-range at $x_3=100\lambda_0$ and (b) range at $x_2=0$.
The left column depicts
$\Re(\overline{\widetilde{\alpha}_{1,1}}/|\widetilde{\alpha}_{1,1}|\widetilde{\bGa})$,
while the right column depicts
$\Im(\overline{\widetilde{\alpha}_{1,1}}/|\widetilde{\alpha}_{1,1}|\widetilde{\bGa}).$}
\label{fig:ext:stable}
\end{figure}

From the experiments in this section, we confirm that combining 
\cref{thm:geoimg} and \cref{prop.incompletedata}, one obtains a
good reconstruction of $\widetilde{\bGa}$ using only coherency matrix data $\vp(\vPsi)$. On one hand,
\cref{thm:geoimg} tells us that
$\cI_{KM}[\vp(\vPsi)](\vy;\omega/c)$ is asymptotically close to
$\cI_{KM}[\vU_{\parallel} \widetilde{\vPi} \vU_s^{*}](\vy;\omega/c) =
\cI_{KM}[\vP_{\parallel} \vPi \vP_s](\vy;\omega/c)$ if the frequency is
sufficiently high. On the other hand, in the Fraunhofer regime,
\cref{prop.incompletedata} tells us that both images
$\cI_{KM}[\vP_{\parallel} \vPi \vP_s](\vy;\omega/c)$ (with incomplete
polarization data) and $\cI_{KM}[ \vPi ](\vy;\omega/c)$ (with full data)
have the same asymptotic expansion. Thus, by post-processing
$\cI_{KM}[\vP_{\parallel} \vPi \vP_s ](\vy;\omega/c)$ (according to
relations \eqref{eq.estimatemf}, \eqref{eq.polareconst}, and correction
of the oscillatory artifacts explained in \cref{sec.phasecorrection}), one
obtains a good reconstruction of the position and polarizability tensor
$\widetilde{\bGa}$ of each dipole with the same resolution (both in
cross-range and range) as in the full data case. 

\begin{remark}\label{rem:decay}
Note that the decay of the cross-range Kirchhoff images
as we move away
from the scatterers
(\cref{fig:cr:det,fig:r:det,fig:cr:stable,fig:r:stable}) is slower than the decay that was observed
for the collocated sources and receivers array case in
\cite{Cassier:2017:IPD}. The decay of the images in our case is given by
\cref{prop.crossrang,prop.rangeresolution} and is inversely proportional
to the cross-range distance from a scatterer to the imaging point. For
the collocated sources and receiver case, the decay is inversely
proportional to the distance squared.
\end{remark}

\begin{remark}\label{rem:numalpharec}
We use \eqref{eq.reconstructsingle2} instead of \eqref{eq.polareconst} to
recover $\widetilde{\bGa}(\vy;\omega)$ for each $\omega\in\cB$. However,
the reconstruction formulas \eqref{eq.reconstructsingle2} and
\eqref{eq.polareconst} are asymptotically close since
\eqref{eq.polareconst} amounts to using the Fraunhofer asymptotic of
$\vU_{\parallel}^{*} \vH_{r}(\vy,\vy;k) \vU_{\parallel}$ and
$\vU_{s}^{*} \vH_{s}(\vy,\vy;k) \vU_{s}$ to reconstruct
$\widetilde{\bGa}$. Indeed, thanks to
\eqref{eq.spreadfunctionarray} and \eqref{eq.asympspreadfunctionsource},
one can show
\begin{eqnarray*} \vU_{\parallel}^{*} \vH_{r}(\vy,\vy;k)
\vU_{\parallel}&=&\frac{\operatorname{mes} \cA}{(4\pi L)^2} \vI+
\cO\left(\frac{a^4\Theta_a}{L^4} \right)+
\cO\left(\frac{a^2\Theta_b}{L^2} \right)+\cO\left(\frac{a^3}{L^3}
\right),\\
\vU_{s}^{*} \vH_{s}(\vy,\vy;k) \vU_{s}&=&\frac{1}{(4\pi L)^2}
\vI+O\left(\frac{b}{L}\right)+\cO\left(\frac{a^2}{\Theta_a L^2}\right),
\end{eqnarray*}
where $\vI$ is the $2\times 2$ identity.
Thus, $\widetilde{\bGa}$ reconstructed with
\eqref{eq.reconstructsingle2} satisfies: $$
\widetilde{\bGa}(\vy;\omega)= \frac{(4\pi L)^4}{ \operatorname{mes}\cA}
\, \widetilde{\cI}_{KM}(\vy;k)+ \cO\left(\frac{a^2\Theta_a}{L^2} \right)+
\cO\left(\Theta_b \right)+\cO\left(\frac{a}{L} \right),
$$
where the leading order term corresponds to \eqref{eq.polareconst}. We
use \eqref{eq.reconstructsingle2} instead of \eqref{eq.polareconst}
as it may be more robust in other settings than the Fraunhofer asymptotic regime.
\end{remark}

\subsection{Stochastic experiment} 
\label{sec.numstoc}
Here we perform an experiment where
we drive the electric dipole source at $\vx_s$ using a stochastic polarization
vector $\vj_s(t)$. For this experiment, we use the
same 3-dipole scatterer setup given in \cref{sec.numdet}.

Making use of a vector version of the Wiener-Khinchin theorem, we 
generate $\vj_s(t)$ as a real Gaussian process satisfying
\begin{equation}\label{eq:stocsrcnum1}
\langle \vj_s(t)\rangle = \vzero~\text{and}~
\langle \vj_s(t+\tau)\vj_s(t)^T\rangle = \vU_s J(\tau)\begin{bmatrix} 1
& 0 \\ 0 & 1\end{bmatrix}\vU_s^*,
\end{equation}
where
\begin{equation}\label{eq:stocsrcnum2}
J(\tau) = 4\pi t_c^{-1}\cos(\omega_0
\tau)\exp[-\pi(\tau/t_c)^2].
\end{equation}
Here $t_c$ denotes the correlation time, which we set to $t_c \approx 1$ ns
giving the signal a frequency band of roughly $\cB/(2\pi) = [1.2,3.6]$
GHz. We generate signals of length $2T$ for $T\approx 266$ ns with 8001
uniformly spaced samples. This sampling is sufficient to resolve
frequencies in $\cB$, while $T$ is long enough to observe good ergodic
averaging (see \cref{sec:stochillum}).

We generate the 
electric field corresponding to $\vj_s(t)$, given in the frequency
domain by the first Born approximation as
\[
\vE(\vx_r,\omega) =
\left(\vG(\vx_r,\vx_s;\omega/c)+\vPi(\vx_r,\vx_s;\omega/c)\right)\vj_s(\omega).
\]
To avoid problems with circular correlation in numerics, we 
transform this signal to the time domain, appropriately zero-pad the
result  and then transform the padded time signal back to the frequency
domain (see e.g., \cite[Appendix B]{Garnier:2013:SNR}). Finally, the
empirical correlations in \eqref{eq:empauto} are generated for each
$\vx_r\in\cA$ as 
\[
\vpsi_{emp}(\vx_r,\tau) = \frac{2\pi}{2T}\int d\omega
e^{-i\omega\tau}\vU_\parallel^*\vE(\vx_r,\omega)\overline{\vE(\vx_r,\omega)}^T\vU_\parallel.
\]
We preprocess the empirical correlations according to \eqref{eq:prepro}
to obtain $\vp(\vpsi_{emp})$, and then form the Kirchhoff image
functions $\cI_{KM}[\vp(\vpsi_{emp})](\vy;k)$. The recovery of the
projected polarizability tensor $\widetilde{\bGa}(\vy)$ is performed as
in the deterministic setting (i.e., using \eqref{eq.reconstructsingle2}
to recover $\widetilde{\bGa}(\vy;\omega)$ followed by integration over
the frequency band as in \eqref{eq.estimatemf}).

In \cref{fig:cr:stoc}, we show the cross-range images formed using the
true array response $\vPi$, and the recovered array response
$\vp(\vpsi_{emp})$. Here we see, just as in the deterministic
setting, good recovery of both the locations and tensors of each dipole
scatterer $\widetilde{\bGa}(\vy_i)$. In \cref{fig:stoc:stable}, we again
demonstrate our proposed phase correction method and the accurate tensor
recovery it provides.

\begin{figure}
\begin{center}
\includegraphics[width=0.8\textwidth]{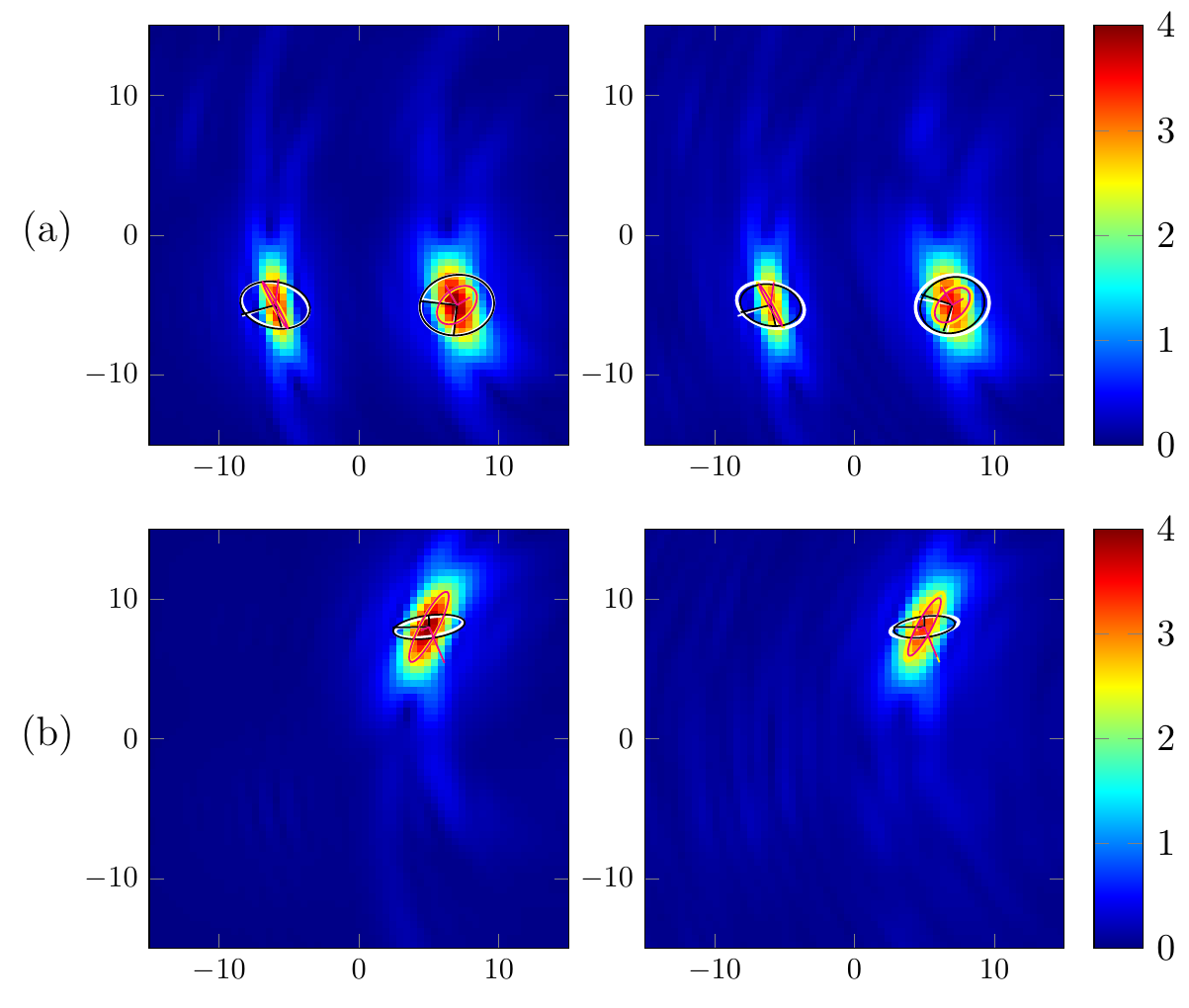}
\end{center}
\caption{Stochastic experiment: cross-range images of
$\|\widetilde{\bGa}(\vy)\|_F$ at range locations (a) $x_3=100\lambda_0$
and (b) $x_3=106\lambda_0$. The left and right columns show
reconstructions from the true array response $\vPi$ and the recovered
array response $\widetilde{\vPi}$, respectively. Here the true tensor
$\Re(\widetilde{\alpha}_{1,1}/|\widetilde{\alpha}_{1,1}|\widetilde{\bGa})$
(resp.
$\Im(\widetilde{\alpha}_{1,1}/|\widetilde{\alpha}_{1,1}|\widetilde{\bGa})$)
is depicted by the white (resp.\@ yellow) ellipses/axes.  Similarly, the
recovered tensor is
depicted using black (real) and magenta (imaginary) ellipses/axes.} 
\label{fig:cr:stoc}
\end{figure}

\begin{figure}
\begin{center}
\includegraphics[width=0.8\textwidth]{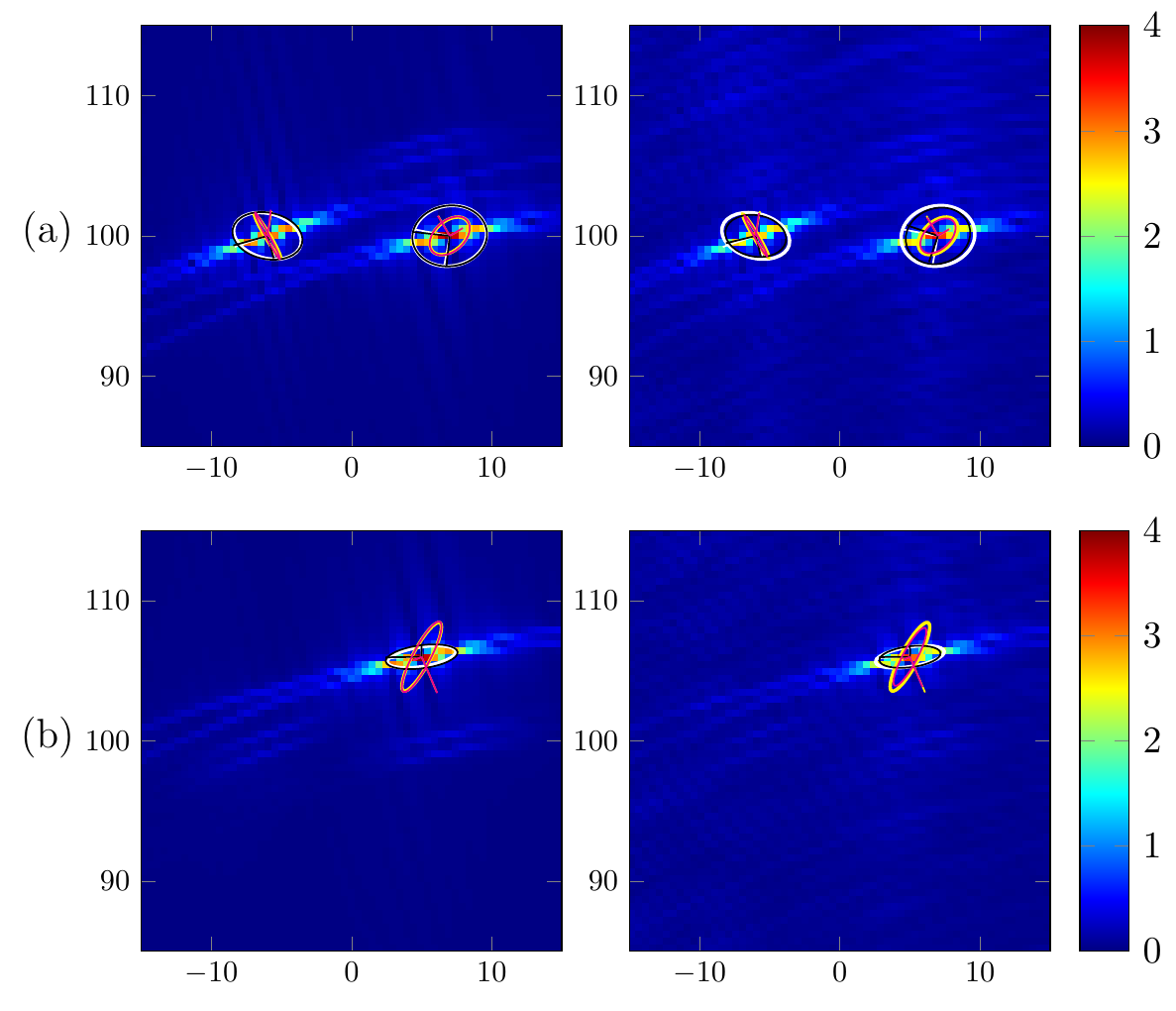}
\end{center}
\caption{Stochastic experiment: range images of
$\|\widetilde{\bGa}(\vy)\|_F$ at cross-range locations (a)
$x_2=-5\lambda_0$ and (b) $x_2=8\lambda_0$.  The left and right columns
show reconstructions from the true array response $\vPi$ and the
recovered array response $\widetilde{\vPi}$, respectively. Here the true
tensor
$\Re(\widetilde{\alpha}_{1,1}/|\widetilde{\alpha}_{1,1}|\widetilde{\bGa})$
(resp.
$\Im(\widetilde{\alpha}_{1,1}/|\widetilde{\alpha}_{1,1}|\widetilde{\bGa})$)
is depicted by the white (resp.\@ yellow) ellipses/axes.  Similarly, the
recovered tensor is depicted using black (real) and magenta (imaginary)
ellipses/axes.}
\label{fig:r:stoc}
\end{figure}

\begin{figure}
\begin{center}
\includegraphics[width=0.8\textwidth]{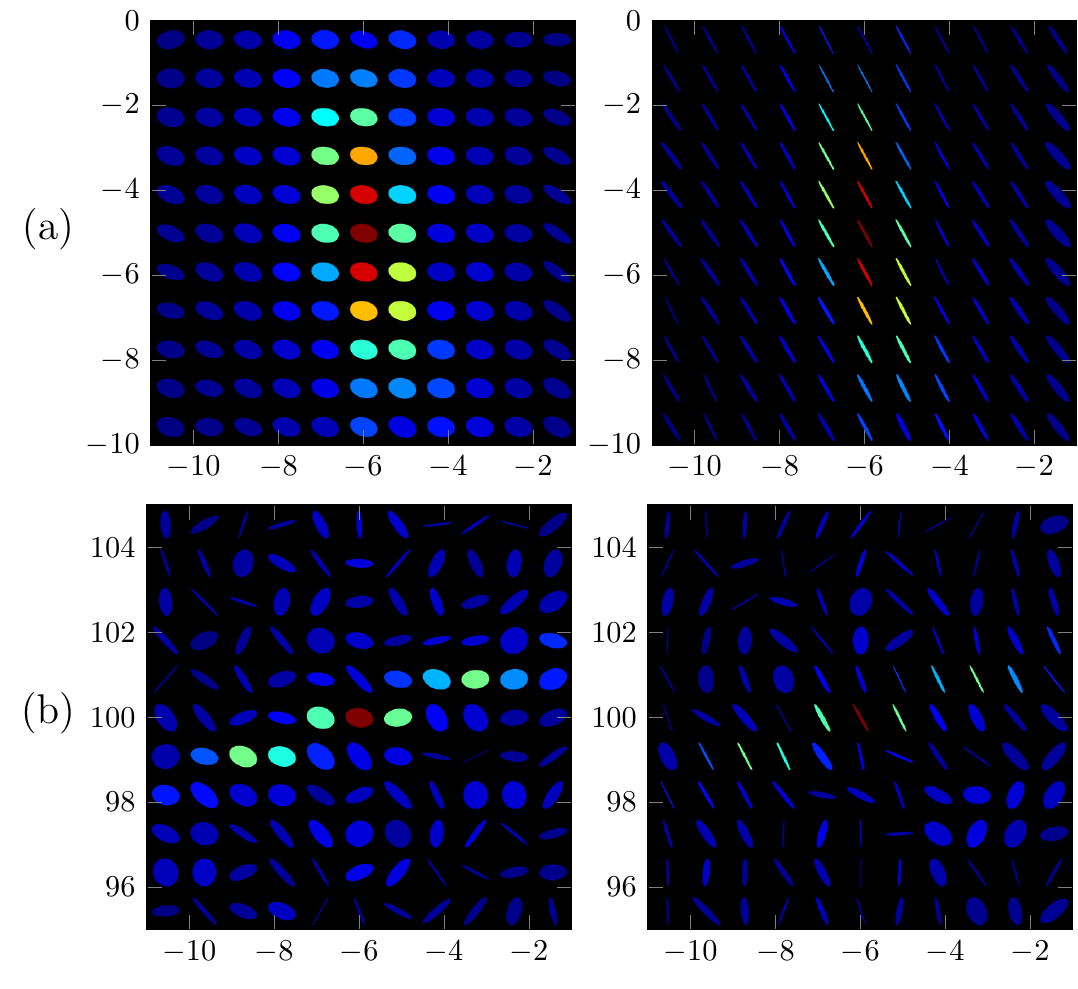}
\end{center}
\caption{Stochastic experiment: visualization of the tensor field with
suppressed oscillatory artifacts in (a) cross-range and (b) range, near
the location $\vy=(-6\lambda_0,-5\lambda_0,100\lambda_0)$. The left
column depicts
$\Re(\overline{\widetilde{\alpha}_{1,1}}|\widetilde{\alpha}_{1,1}|\widetilde{\bGa})$,
while the right column depicts
$\Im(\overline{\widetilde{\alpha}_{1,1}}|\widetilde{\alpha}_{1,1}|\widetilde{\bGa}).$}
\label{fig:stoc:stable}
\end{figure}

\section{Summary and future work}
\label{sec:discussion}
We have introduced a method for imaging the polarization tensor of small
dielectric scatterers in a homogeneous medium, from measurements of the
coherency matrix at an array arising from illumination of a point
source. The main idea in our method is to partially recover the total
electric field at the array and then use it to image using an
electromagnetic version of Kirchhoff migration. We prove using a
stationary phase argument that the error we make in estimating the
electric field does not affect the images. Moreover the images we obtain
are tensor fields that contain information about the components of the
polarization tensor in a certain basis.

There are several ways in which we plan to extend this work. The first
one is to study this problem when the particles are close to an
interface. This is useful in microscopy, where the objects that one
wishes to image are on a substrate and would involve using half-space Green
functions. In the same direction, stratified media Green functions could
be considered. One aspect of our method is that we are
using what are essentially interference patterns between the incident
and scattered fields. In this respect, our method is a form of
holography. It is natural to ask whether we can use other incident
fields to image (e.g. a plane wave). Finally we point out that our
analysis relies on a linearization of the problem (Born approximation).
In reality nearby scatterers could interact. We would like to study this
case using discrete Lippmann-Schwinger models (e.g. the Foldy-Lax model \cite{Malet:2005:MGM,Cassier:2013:MSA}) to find the
locations of scatterers and also correct for any artifacts the multiple
reflections may have introduced.

\section*{Acknowledgements}
This work was partially supported by the National Science Foundation
grant DMS-1411577. The work of MC was supported by Simons Foundation
grant \#376319 (Michael I. Weinstein).  FGV thanks the Laboratoire Jean
Kuntzmann in Grenoble for hosting him while this article was written.
FGV also thanks support from the National Science Foundation grant
DMS-1439786, while FGV was in residence at the Institute for
Computational and Experimental Research in Mathematics in Providence,
RI, during the Fall 2017 semester.

\appendix
\section{Proof of \cref{prop:statstab}}\label{sec:app_statstab} Here we
prove \cref{prop:statstab}. This proposition and its proof are patterned
after a result by Garnier and Papanicolaou \cite[Proposition
4.1]{Garnier:2009:PSI} which was shown in the case of acoustic waves. We
make the necessary modifications to adapt the result to the
electromagnetic setting. Throughout this section we slightly abuse 
notation by identifying functions in the time domain and in the
frequency domain using the same symbol.

\begin{proof}
Let us introduce the notation
\[
 \cvG(\vx_r,\omega) \equiv  \vG(\vx_r,\vx_s;\omega/c) +
 \vPi(\vx_r,\vx_s;\omega/c)
\]
so that the total electric field is $\vE(\vx_r,\omega) =
\cvG(\vx_r,\omega) \vj_s(\omega)$ in the frequency domain. In the time
domain, the total field is 
\[
 \vE(\vx_r,t) = (2\pi)^{-1} [ \cvG(\vx_r,\cdot) \ast \vj_s(\cdot) ](t),
\]
and is a stationary random process because we assume the process
$\vj_s(t)$ driving the source is stationary. Hence we have
\[
\begin{aligned}
\Ma{\vpsi_{emp}(\vx_r,\tau)} &= \frac{1}{2T}\int_{-T}^Tdt
\vU_\parallel^* \Ma{\vE(\vx_r,t+\tau)\vE(\vx_r,t)^T} \vU_\parallel\\
&= \frac{1}{2T}\int_{-T}^T dt \vU_\parallel^* \Ma{\vE(\vx_r,\tau)\vE(\vx_r,0)^T}  \vU_\parallel\\
&= \vU_\parallel^* \Ma{\vE(\vx_r,\tau)\vE(\vx_r,0)^T} \vU_\parallel,
\end{aligned}
\]
and thus $\Ma{\vpsi_{emp}(\vx_r,\tau)}$ is independent of $T$.
Furthermore, we have
\[
\begin{aligned}
\Ma{\vE(\vx_r,\tau)\vE(\vx_r,0)^T} &= \int dt'\int dt''
\cvG(\vx_r,\tau-t')\Ma{\vj_s(t')\vj_s(t'')^T} 
\cvG(\vx_r,-t'')^T
\\
&=\int dt'\int dt''
\cvG(\vx_r,\tau-t')\vU_s \widetilde{\vJ}_s(t'-t'') \vU_s^* \cvG(\vx_r,-t'')^T\\
&=\int
dt''\left[\cvG(\vx_r,\cdot)\ast\vU_s \widetilde{\vJ}_s\vU_s^*\right] 
(\tau-t'')\cvG(\vx_r,-t'')^T\\
&= (2\pi) \int d\omega
e^{-\i\omega\tau}\cvG(\vx_r,\omega)\vU_s \widetilde{\vJ}_s(\omega) \vU_s^*
\cvG(\vx_r,\omega)^*.
\end{aligned}
\]
\Cref{eq:avgempauto} is verified by multiplying the previous expression
on the left by $\vU_\parallel^*$ and on the right by $\vU_\parallel$.

We show the ergodicity in \eqref{eq:ergodic}, by proving that the variance of each component of $\vpsi_{emp}(\vx_r,\tau)$ is $\cO(1/T)$ as $T \to
\infty$. To simplify the expressions we use the notations
$[(\vpsi_{emp})(\vx_r,t)]_{ij} =
\psi_{ij}(t)$, $[\cvG(\vx_r,t)]_{mn}=\cG_{mn}(t)$, $[\vJ_s(t)]_{mn} =
J_{mn}(t)$ and $[\vj_s(t)]_m = j_m(t)$ for $i,j\in\{1,2\}$ and $m,n \in
\{1,2,3\}$.  We compute the covariance of
$\psi_{ij}(\tau)$ with the Einstein summation convention,
\begin{equation}\label{eq:corrcov}
\begin{aligned}
\Cov&\big(\psi_{ij}(\tau),\psi_{i'j'}(\tau+\Delta\tau)\big) =
\frac{1}{(2T)^2}\int_{-T}^T\int_{-T}^Tdt dt'\int dsds'dudu'\\
&\times
\cG_{im}(s+\tau)\cG_{jn}(u)\cG_{i'm'}(s'+\tau+\Delta\tau) 
\cG_{j'n'}(u')\\
&\times\Big[\Ma{j_m(t-s)j_n(t-u)j_{m'}(t'-s')j_{n'}(t'-u')}\\
&-\Ma{j_m(t-s)j_n(t-u)}\Ma{j_{m'}(t'-s')j_{n'}(t'-u')}\Big].
\end{aligned}
\end{equation}
For the following, it is helpful to notice the symmetry $J_{ij}(\tau) = J_{ji}(-\tau)$ in
the time domain, which follows from \eqref{eq:js:t}.
The product of the second order moments is 
\[
\Ma{j_m(t-s)j_n(t-u)}\Ma{j_{m'}(t'-s')j_{n'}(t'-u')}= 
J_{mn}(u-s)J_{n'm'}(s'-u'),
\]
while the fourth order moment is given by the Gaussian moment theorem
\[
\begin{aligned}
\Ma{j_m(t-s)j_n(t-u)j_{m'}(t'-s')j_{n'}(t'-u')}=& 
J_{mn}(u-s)J_{n'm'}(s'-u')\\
+&J_{mm'}(t-s-t'+s')J_{n'n}(t'-u'-t+u)\\
+&J_{mn'}(t-s-t'+u')J_{m'n}(t'-s'-t+u).
\end{aligned}
\]
We evaluate the following integrals as
\[
 \begin{aligned}
  I_1 &= \frac{1}{(2T)^2} \int_{-T}^T dt \int_{-T}^T dt'
  J_{mm'}(t-t'+s'-s) J_{n'n}(t'-t + u - u')\\
  &=\int d\omega \int d\omega' \sinc^2((\omega-\omega')T) e^{-i\omega(s'-s)}
  e^{-i\omega'(u-u')} J_{mm'}(\omega) J_{n'n}(\omega'),
 \end{aligned}
\]
and
\[
 \begin{aligned}
  I_2 &= \frac{1}{(2T)^2} \int_{-T}^T dt \int_{-T}^T dt'
  J_{mn'}(t-t'+u'-s) J_{m'n}(t'-t + u - s')\\
  &=\int d\omega \int d\omega' \sinc^2((\omega-\omega')T) e^{-i\omega(u'-s)}
  e^{-i\omega'(u-s')} J_{mn'}(\omega) J_{m'n}(\omega').
 \end{aligned}
\]
To see where $I_1$ and $I_2$ come from, consider that for
functions $f,g$ and scalars $a,b$ we have
\[
\begin{aligned}
 \frac{1}{(2T)^2}&\int_{-T}^T dt \int_{-T}^T dt' f(t-t'+a) g(t'-t+b)\\
 &=\frac{1}{(2T)^2}\int_{-T}^T dt \int_{-T}^T dt' \int d\omega
 e^{-i\omega(t-t'+a)} f(\omega) \int d\omega'
 e^{-i\omega'(t'-t+b)} g(\omega')\\
 &=\frac{1}{(2T)^2} \int d\omega \int d\omega' f(\omega) g(\omega')
 e^{-i\omega a} e^{-i\omega' b} \int_{-T}^T dt e^{it(\omega'-\omega)}
 \int_{-T}^T dt' e^{it'(\omega-\omega')}\\
 &=\int d\omega \int d\omega' f(\omega) g(\omega')
 e^{-i\omega a} e^{-i\omega' b}
 \left(\frac{\sin(T(\omega'-\omega)}{T(\omega'-\omega)}\right)^2.
\end{aligned}
\]
We split the covariance \eqref{eq:corrcov} into two terms
$\Cov(\psi_{ij}(\tau),\psi_{i'j'}(\tau+\Delta\tau)) =  V_1 + V_2$, where
$V_l$ involves $I_l$, $l=1,2$. Using the expression for $I_1$ in the
first term $V_1$, we can evaluate the integrals in $s,s',u,u'$ to get
\[
\begin{aligned}
 V_1 &=\int ds ds' du du' \cG_{im}(s+\tau)\cG_{jn}(u)\cG_{i'm'}(s'+\tau+\Delta\tau) 
\cG_{j'n'}(u') I_1\\
 &= (2\pi)^4 \int d\omega \int d\omega' e^{i\omega\Delta\tau} \sinc^2((\omega-\omega')T) 
 \cG_{im}(\omega)   \cG_{i'm'}(-\omega) 
 \cG_{jn}(-\omega') \cG_{j'n'}(\omega')\\
 &\quad\quad\quad\quad\times
 J_{mm'}(\omega) J_{n'n}(\omega').
\end{aligned}
\]
The details for the calculation of $V_1$ are as follows,
\[
\begin{aligned}
\int ds e^{i\omega s} \cG_{im}(s+\tau) &=(2\pi) e^{-i\omega \tau}
\cG_{im}(\omega),\\
\int ds' e^{-i\omega s'} \cG_{i'm'}(s'+\tau + \Delta \tau) &=(2\pi)
e^{i\omega(\tau+\Delta\tau)}\cG_{i'm'}(-\omega),\\
\int du e^{-i\omega'u} \cG_{jn}(u) &= (2\pi)
\cG_{jn}(-\omega'),~\text{and}\\
\int du' e^{i\omega'u'} \cG_{j'n'}(u') &= (2\pi) \cG_{j'n'}(\omega').
\end{aligned}
\]
Similarly for the second term $V_2$ we obtain
\[
\begin{aligned}
 V_2 &= (2\pi)^4 \int d\omega \int d\omega'
 e^{-i\omega\tau}e^{-i\omega'(\tau+\Delta\tau)}
 \sinc^2((\omega-\omega')T)\\
 &\quad\quad\quad\quad\quad\times \cG_{im}(\omega)   \cG_{i'm'}(\omega') 
 \cG_{jn}(-\omega') \cG_{j'n'}(-\omega)
 J_{mn'}(\omega) J_{m'n}(\omega').
\end{aligned}
\]
The details for the calculation of $V_2$ are as follows,
\[
\begin{aligned}
\int ds e^{i\omega s} \cG_{im}(s+\tau) &=(2\pi) e^{-i\omega \tau}
\cG_{im}(\omega),\\
\int ds' e^{i\omega' s'} \cG_{i'm'}(s'+\tau + \Delta \tau) &=(2\pi)
e^{-i\omega'(\tau+\Delta\tau)}\cG_{i'm'}(\omega'),\\
\int du e^{-i\omega'u} \cG_{jn}(u) &= (2\pi)
\cG_{jn}(-\omega'),~\text{and}\\
\int du' e^{-i\omega u'} \cG_{j'n'}(u') &= (2\pi) \cG_{j'n'}(-\omega).
\end{aligned}
\]
Since we have
\[
 T \int \sinc^2(T\omega) d\omega = \pi,
\]
we see that when we take $\Delta \tau = 0$ and the limit as $T \to \infty$ we get
\[
\begin{aligned}
 \frac{T}{\pi} V_1 \to & L_1 \equiv (2\pi)^4 \int d\omega \cG_{im}(\omega)
 \cG_{i'm'}(-\omega) \cG_{jn}(-\omega) \cG_{j'n'}(\omega) J_{mm'}(\omega)
 J_{n'n}(\omega),~\text{and}\\
 \frac{T}{\pi} V_2 \to & L_2 \equiv (2\pi)^4 \int d\omega e^{-2i\omega\tau}
 \cG_{im}(\omega)  \cG_{i'm'}(\omega) \cG_{jn}(-\omega)
 \cG_{j'n'}(-\omega)J_{mn'}(\omega) J_{m'n}(\omega),
\end{aligned}
\]
using an approximate Dirac identity, which we can use e.g. when
$J_{ij}(\omega)$ is Schwartz class.
Notice that the limiting values $L_1$ and $L_2$ are guaranteed to be real because they
involve the Fourier transform of functions of $\omega$ that satisfy
an appropriate reflection principle. Evaluating
$\Cov(\psi_{ij}(t),\psi_{i'j'}(t))$ on the diagonal (i.e., $i=i',j=j'$),
we conclude that $T\Var(\psi_{ij}(\tau)) = \cO(1)$ as $T\to\infty$,
which establishes \eqref{eq:ergodic}.
\end{proof}	

\section{Conditioning of projected Green function matrix}%
\label{app:gtilde}
Here we show that the  matrix $\vP_\parallel \vG(\vx_r,\vx_s;k) \vP_s$
is close to a rank $2$ matrix for which the condition number can be
calculated explicitly in terms of the angles of the triangle spanned by
$\vx_r$, $\vx_s$ and $\vy_0$. We recall that the condition number of a rank
$r$ matrix $\vA$ is $\cond(\vA) = \sigma_1(\vA) /
\sigma_r(\vA)$, where $\sigma_j(\vA)$ is the $j-$th singular value of
$\vA$. Since $\widetilde{\vG}(\vx_r,\vx_s;k)= \vU_\parallel^*
\vG(\vx_r,\vx_s;k) \vU_s$  this
result implies $\widetilde{\vG}(\vx_r,\vx_s;k)$ is close to a $2\times
2$ invertible matrix, because $\vP_\parallel = \vU_\parallel
\vU_\parallel^*$ and $\vP_s = \vP(\vx_s,\vy_0) = \vU_s \vU_s^*$, with
the $3\times 2$ matrices $\vU_\parallel$ and $\vU_s$ being unitary.
\begin{lemma}
\label{lem:pgp}
Under the Fraunhofer asymptotic regime (see \cref{sec:fraunhofer}) we
have that
\begin{equation}
 \vP_\parallel \vG(\vx_r,\vx_s;k) \vP_s = G(\vx_s,\vx_r;k) \left[ \vP(\vx_r,\vy_0)
 \vP(\vx_s,\vx_r)  \vP_s + \cO\M{\frac{a}{L}} + \cO\M{\frac{1}{kd}}
 \right].
 \label{eq:pgp}
\end{equation}
Note that we neglected $\cO(a/(kdL))$ because 
$kd \gg 1$ and $a/L \ll 1$.
Moreover the condition number of $\vP(\vx_r,\vy_0) \vP(\vx_s,\vx_r)
\vP(\vy_0,\vx_s)$ is $|\cos(\theta_r)\cos(\theta_s)|^{-1}$, where
$\theta_j$ is the angle at the vertex $\vx_j$ of the triangle with
vertices $\vx_r$, $\vx_s$ and $\vy_0$, for $j=r,s$.
\end{lemma}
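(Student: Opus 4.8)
The plan is to establish the two assertions in turn. For \eqref{eq:pgp} I would start from the exact dyadic Green function \eqref{eq:dyadic}. Writing $d = |\vx_r - \vx_s|$, the scalar $m(kd) = (\imath kd - 1)/(kd)^2 = \cO(1/(kd))$, so the bracketed factor in \eqref{eq:dyadic} equals $\vP(\vx_r,\vx_s) + \cO(1/(kd))$ and hence
\[
 \vG(\vx_r,\vx_s;k) = G(\vx_r,\vx_s;k)\Mb{\vP(\vx_r,\vx_s) + \cO\M{\frac{1}{kd}}}.
\]
Left-multiplying by $\vP_\parallel$ and right-multiplying by $\vP_s$, the only remaining step is to replace $\vP_\parallel$ by $\vP(\vx_r,\vy_0)$. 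Since $\vx_r = (\vx_{r,\parallel},0)$ and $\vy_0 = (0,0,L)$ with $|\vx_{r,\parallel}| = \cO(a) \ll L$ in the Fraunhofer regime, the unit vector $(\vx_r - \vy_0)/|\vx_r-\vy_0|$ equals $-\ve_3 + \cO(a/L)$, so that $\vP(\vx_r,\vy_0) = \vI - \ve_3\ve_3^\tr + \cO(a/L) = \vP_\parallel + \cO(a/L)$. Substituting this and using $\vP(\vx_r,\vx_s) = \vP(\vx_s,\vx_r)$ yields \eqref{eq:pgp}; the cross term is $\cO(a/L)\,\cO(1/(kd)) = \cO(a/(kdL))$, which is dominated by the two displayed errors and therefore dropped.

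For the condition number, I would exploit that the three unit vectors
\[
 \vn_1 = \frac{\vx_r - \vy_0}{|\vx_r - \vy_0|},\quad
 \vn_2 = \frac{\vx_s - \vx_r}{|\vx_s - \vx_r|},\quad
 \vn_3 = \frac{\vx_s - \vy_0}{|\vx_s - \vy_0|}
\]
defining the projectors $\vP_j := \vI - \vn_j\vn_j^\tr$ are the edge directions of the triangle with vertices $\vy_0,\vx_r,\vx_s$, and therefore all lie in the plane $\Pi$ of that triangle. Letting $\vz$ denote a unit normal to $\Pi$, every $\vP_j$ fixes $\vz$ and maps $\Pi$ into itself, so $\vM := \vP(\vx_r,\vy_0)\vP(\vx_s,\vx_r)\vP(\vy_0,\vx_s) = \vP_1\vP_2\vP_3$ is block-diagonal with respect to $\real^3 = \Pi \oplus \linspan(\vz)$. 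On $\linspan(\vz)$ it acts as the identity, contributing a singular value $1$; on $\Pi$ each $\vP_j$ restricts to the rank-one orthogonal projection $\vf_j\vf_j^\tr$ onto the in-plane direction $\vf_j \perp \vn_j$, so the restriction telescopes to
\[
 \vM|_\Pi = (\vf_1\cdot\vf_2)(\vf_2\cdot\vf_3)\,\vf_1\vf_3^\tr,
\]
a rank-one matrix whose unique nonzero singular value is $|(\vf_1\cdot\vf_2)(\vf_2\cdot\vf_3)|$.

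The last step is to identify these inner products with the interior angles. Because a common in-plane rotation by a right angle is an isometry, $|\vf_i\cdot\vf_j| = |\vn_i\cdot\vn_j|$; reading off the interior angles at the vertices $\vx_r$ and $\vx_s$ then gives $|\vn_1\cdot\vn_2| = |\cos\theta_r|$ and $|\vn_2\cdot\vn_3| = |\cos\theta_s|$. Hence the nonzero singular values of $\vM$ are $1$ and $|\cos\theta_r\cos\theta_s| \le 1$, and since $\vM$ has rank two (generically) we conclude $\cond(\vM) = \sigma_1/\sigma_2 = |\cos\theta_r\cos\theta_s|^{-1}$. I expect the main obstacle to be this second part: one must verify the in-plane/out-of-plane block structure and, above all, match each $\vf_i\cdot\vf_j$ to the correct interior angle, since a sign or vertex mislabeling would produce a supplementary angle and the wrong cosine. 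Finally I would remark that the degenerate cases $\cos\theta_r = 0$ or $\cos\theta_s = 0$, where the in-plane block vanishes, the rank drops to one, and the conditioning blows up, are consistent with the stated formula.
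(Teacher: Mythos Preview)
Your proposal is correct and follows essentially the same route as the paper: for \eqref{eq:pgp} both arguments reduce the dyadic Green function to $G\,\vP(\vx_s,\vx_r)$ up to $\cO(1/(kd))$ and replace $\vP_\parallel$ by $\vP(\vx_r,\vy_0)$ up to $\cO(a/L)$; for the condition number both exploit the common unit normal $\vz$ to the triangle plane to split off a singular value $1$, and then compute the in-plane action, the paper via explicit $3\times 2$ unitary factors $\vU_{rs},\vU_{j0}$ whose second columns are precisely your $\vf_j$, and you via the rank-one telescoping $\vf_1\vf_3^\tr$ with coefficient $(\vf_1\cdot\vf_2)(\vf_2\cdot\vf_3)$.
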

\begin{proof}
We can use
\cite[eq. (7)]{Cassier:2017:IPD} to see that
\begin{equation}
 \vG(\vx_s,\vx_r;k) = G(\vx_s,\vx_r;k) [\vP(\vx_s,\vx_r) +
 \cO((kd)^{-1})].
\end{equation}
The approximation \eqref{eq:pgp} follows from the last equation and
\eqref{eq:Pxry}.
To prove the expression of the condition number, we write a SVD of the
projectors as follows $\vP(\vx_s,\vx_r) = \vU_{rs} \vU_{rs}^*$,
$\vP(\vx_j,\vy_0) = \vU_{j0}\vU_{j0}^*$,
where the $3\times 2$ unitary matrices $\vU_{rs}$ and $\vU_{j0}$ are
\begin{equation}
 \vU_{rs} = \left[\vz,
 \frac{\vz\times(\vx_r-\vx_s)}{|\vz\times(\vx_r-\vx_s)|}\right],
 ~\text{and}~
 \vU_{j0} = \left[\vz,
 \frac{\vz\times(\vy_0-\vx_j)}{|\vz\times(\vy_0-\vx_j)|}\right],
 ~j=r,s,
\end{equation}
and $\vz$ is a unit length vector in $\{ \vy_0 - \vx_s, \vy_0 - \vx_r
\}^\perp$.  With this choice, a direct calculation gives
$\vU_{j0}^* \vU_{rs} = \diag(1,\cos(\theta_j))$, $j=r,s$. Therefore we
get  the SVD (up to a sign):
\begin{equation}
\vP(\vx_r,\vy_0) \vP(\vx_s,\vx_r) \vP(\vy_0,\vx_s) = \vU_{r0}
\diag(1,\cos(\theta_r)\cos(\theta_s)) \vU_{s0}^*.
\end{equation}
The rank and condition number identity follows.
\end{proof}

\section{Proof of Lemma \ref{lem.asymHr}}\label{sec:prooflem1} 

\begin{proof}
To prove \eqref{eq.decreascrossrange1}, it is more convenient to use the
rescaled array $\widetilde{\cA} = a^{-1} \cA$, where $
\operatorname{mes} \widetilde{\cA} =\cO(1)$.  We then follow the
standard method based on integration by parts to study the asymptotic
behavior of oscillating integrals (see e.g.\cite{Wong:2014:AAI}).

To this aim, we first rewrite the relation \eqref{eq.defhraproxs} as: 
\begin{equation}\label{eq.statphae}
\widetilde{\vH}_r(\vy,\vy';k)=\frac{a^2 \exp[\imath k(\eta'-\eta)]  }{(4\pi L)^2}    \, \vP_{\parallel} \, \int_{\widetilde{\cA}} d  \widetilde{\vx}_{r,\parallel} \exp[\imath f(\widetilde{\vx}_r)] \, 
\end{equation}
where $f$ is defined on the rescaled array $\widetilde{\cA}$ by $$ f(\widetilde{\vx}_{r,\parallel})=\frac{k a}{L}  \, \big[\widetilde{\vx}_r\cdot(\vy_{\parallel}-\vy'_{\parallel})\big] \mbox{ with }  \widetilde{\vx}_{r,\parallel}=\frac{\vx_{r,\parallel}}{a}\in \widetilde{\cA}.
$$
Using the identity (which holds since $\nabla_{ \widetilde{\vx}_{r,\parallel}} f=(k a /L) (\vy_{\parallel}-\vy'_{\parallel})$ is constant):
$$
\exp[\imath f]=\frac{1}{\imath} \operatorname{div}_{\widetilde{\vx}_r}\left( \exp[\imath f]\
\frac{\nabla{f}_{\widetilde{\vx}_r}}{|\nabla{f}_{\widetilde{\vx}_r}|^2}\right),
$$
and the divergence theorem applied to \eqref{eq.statphae} yields
$$
\widetilde{\vH}_r(\vy,\vy';k)=\frac{a^2 \exp[\imath k(\eta'-\eta)]  }{(4\pi L)^2}\, \vP_{\parallel}    \, \frac{L}{k a|\vy_{\parallel}-\vy'_{\parallel}|}  \, \int_{\partial \widetilde{\cA}}  \exp[\imath f]\
\frac{\nabla{f}_{\widetilde{\vx}_r}}{\imath
|\nabla{f}_{\widetilde{\vx}_r}|} \cdot \vn,
$$
where $\partial\widetilde{\cA}$ is the boundary of $\widetilde{\cA}$ and
$\vn$ is the outward pointing normal vector to $\partial\widetilde{\cA}$.
We conclude using the Cauchy-Schwarz inequality to get
$$
\|\widetilde{\vH}_r(\vy,\vy';k) \|\leq \frac{\operatorname{mes}\widetilde{\cA} \, a^2}{(4\pi L)^2}    \|\vP_{\parallel} \| \, \frac{L}{ a k |\vy_{\parallel}-\vy'_{\parallel}|}= \frac{a^2}{L^2}  \, \cO\Big(  \frac{ L}{
 a \, k   |\vy_{\parallel}-\vy'_{\parallel} |}\Big).
$$

Finally, the formula  \eqref{eq.asymptappoxHr} follows from an immediate  computation of the expression  of \eqref{eq.defhraproxs} when $\vy'=\vy$.
\end{proof}

\bibliographystyle{siamplain}
\bibliography{stokesbib}
\end{document}